\begin{document}
 \baselineskip 16.5pt
 \hfuzz=6pt

\newtheorem{theorem}{Theorem}[section]
\newtheorem{proposition}[theorem]{Proposition}
\newtheorem{coro}[theorem]{Corollary}
\newtheorem{lemma}[theorem]{Lemma}
\newtheorem{definition}[theorem]{Definition}
\newtheorem{example}[theorem]{Example}
\newtheorem{remark}[theorem]{Remark}
\newcommand{\ra}{\rightarrow}
\renewcommand{\theequation}
{\thesection.\arabic{equation}}
\newcommand{\ccc}{{\mathcal C}}
\newcommand{\one}{1\hspace{-4.5pt}1}

 \def \Lips  {{   \Lambda}_{L}^{ \alpha,  s }(X)}
\def\BL {{\rm BMO}_{L}(X)}
\def\HAL { H^p_{L,{at}, M}(X) }
\def\HML { H^p_{L, {mol}, M}(X) }
\def\HM{ H^p_{L, {mol}, 1}(X) }
\def\Ma { {\mathcal M} }
\def\MM { {\mathcal M}_0^{p, 2, M, \epsilon}(L) }
\def\dMM { \big({\mathcal M}_0^{p,2, M,\epsilon}(L)\big)^{\ast} }
  \def\RR {  {\mathbb R}^n}
\def\HSL { H^p_{L, S_h}(X) }
\newcommand\mcS{\mathcal{S}}
\newcommand\mcB{\mathcal{B}}
\newcommand\D{\mathcal{D}}
\newcommand\C{\mathbb{C}}
\newcommand\N{\mathbb{N}}
\newcommand\R{\mathbb{R}}
\newcommand\G{\mathbb{G}}
\newcommand\T{\mathbb{T}}
\newcommand\Z{\mathbb{Z}}
\newcommand\lp{L^p(M,\mu)}
\newcommand\Ad{\,{\rm ad}}
\newcommand{\bchi}{\mathlarger{\chi}}
 \newcommand{\comment}[1]{\vskip.3cm
\fbox{%
\color{red}
\parbox{0.93\linewidth}{\footnotesize #1}}}

\newcommand\CC{\mathbb{C}}
\newcommand\NN{\mathbb{N}}
\newcommand\ZZ{\mathbb{Z}}

\renewcommand\Re{\operatorname{Re}}
\renewcommand\Im{\operatorname{Im}}

\newcommand{\mc}{\mathcal}

\def\SL{\sqrt[m] L}
\newcommand{\la}{\lambda}
\def \l {\lambda}
\newcommand{\eps}{\varepsilon}
\newcommand{\pl}{\partial}
\newcommand{\supp}{{\rm supp}{\hspace{.05cm}}}
\newcommand{\x}{\times}
\newcommand{\mar}[1]{{\marginpar{\sffamily{\scriptsize
        #1}}}}
\newcommand{\as}[1]{{\mar{AS:#1}}}

\newcommand\wrt{\,{\rm d}}

\title[ Sharp spectral multipliers without semigroup framework ]
{Sharp spectral multipliers without semigroup framework\\[2pt]   and application to  random walks   }
\author{Peng Chen, \ El Maati Ouhabaz, \ Adam Sikora, \ and \ Lixin Yan}
\address{Peng Chen, Department of Mathematics, Sun Yat-sen (Zhongshan)
University, Guangzhou, 510275, P.R. China}
\email{achenpeng1981@163.com}
\address{El Maati Ouhabaz,  Institut de Math\'ematiques de Bordeaux,  Universit\'e de Bordeaux, UMR 5251,
351, Cours de la Lib\'eration 33405 Talence, France}
\email{Elmaati.Ouhabaz@math.u-bordeaux.fr}
\address{
Adam Sikora, Department of Mathematics, Macquarie University, NSW 2109, Australia}
\email{adam.sikora@mq.edu.au}
\address{
Lixin Yan, Department of Mathematics, Sun Yat-sen (Zhongshan) University, Guangzhou, 510275, P.R. China}
\email{mcsylx@mail.sysu.edu.cn
}

\date{\today}
\subjclass[2000]{42B15, 42B20,   47F05.}
\keywords{ Spectral multipliers, polynomial off-diagonal decay kernels, space of homogeneous type, random walk. }

\begin{abstract}
In this paper we prove  spectral multiplier theorems for abstract self-adjoint operators
 on spaces of homogeneous type. We have two main objectives. The first one is to work outside the semigroup context. In contrast to previous works  on this subject,  we do not make any assumption on the semigroup.  The second  objective is to consider polynomial off-diagonal decay instead of exponential one. Our approach and results lead  to  new applications to several operators such as differential operators, pseudo-differential operators as well as Markov chains. 
 In our general context we introduce a restriction type estimates \`a la Stein-Tomas.  This allows us to obtain 
sharp spectral multiplier theorems and hence sharp Bochner-Riesz  summability results. 
Finally, we consider the random walk on the integer lattice $\Z^n$ and prove  sharp Bochner-Riesz summability  results similar  to those known for the standard Laplacian  on $\R^n$. 

\end{abstract}

\maketitle

 \tableofcontents

\newpage

\section{Introduction}
\setcounter{equation}{0}

Let $(X,d, \mu)$ be a metric measure space, i.e. $X$ is  a metric space with distance function $d$
and   $\mu$ is a nonnegative, Borel, doubling measure on $X$.  Let $A$ be a self-adjoint  operator acting
on $L^2(X,\mu)$. By the spectral theorem one has
$$
A=\int_{-\infty}^\infty \lambda dE(\lambda),
$$
where $dE(\lambda)$ is the spectral resolution of the operator $A$. Then for any bounded measurable
function $F \colon \R  \to \R$ one can define operator
$$
F(A)=\int_{-\infty}^\infty F(\lambda) dE(\lambda).
$$
It is a standard fact that the operator $F(A)$ is  bounded on  $L^2$ with norm  bounded by the $L^\infty$ norm of the function $F$.

The theory of spectral multipliers consists of finding minimal regularity conditions on  $F$ (e.g. existence of a finite number of derivatives of $F$ in a certain  space) which  ensure that the operator $F(A)$
can be extended to a  bounded operator on $L^p(X,\mu)$ for some range
of exponents $p \neq 2$.   Spectral multipliers results are modeled  on Fourier multiplier results described in fundamental works of Mikhlin
\cite{Mi}  and H\"ormander  \cite{Ho4}. The initial motivation for
spectral multipliers  comes from the problem of convergence of Fourier series or more generally of eigenfunction expansion for differential operators.  One of the most famous spectral multipliers is the   Bochner-Riesz mean $$\sigma_{R, \alpha} (A) := \left(1- \frac{A}{R}\right)_+^\alpha.$$ When $\alpha $ is large, the function $\sigma_{R, \alpha}$ is smooth. The problem is then to prove boundedness on $L^p(X)$ (uniformly w.r.t. the parameter $R$) for small values of $\alpha$. This is the reason why, for general function $F$ with compact support, we study $\sup_{t > 0} \| F(tA) \|_{p\to p} \le C < \infty$. The constant $C$ depends on $F$ and measures the (minimal) smoothness required on the function. 

In recent years, spectral multipliers have been studied by many authors in different contexts, including differential or pseudo-differential operators on manifolds, sub-Laplacians on Lie groups, Markov chains as well as operators in abstract settings. We refer the reader to \cite{A1, A2, BO, Blu, C3, CowS, DOS, GHS, Heb1, Heb2, Ho4, JN95, KU, Ma, Mi, SYY} and references therein.  We mention in particular the recent paper \cite{COSY} where sharp spectral multiplier results as well as end-point estimates for Bochner-Riesz means are proved. A restriction type estimate was introduced there in an abstract setting which  turns out to be equivalent to the classical Stein-Tomas restriction estimate in the case of the Euclidean Laplacian. Also it is proved there (see also \cite{BO}) that in an abstract setting, dispersive or Strichartz  estimates for the Schr\"odinger equation imply  sharp spectral multiplier results.



 \medskip

\subsection{The main results.} There are two main objectives of the present paper. First, in contrast to the previous papers on spectral multipliers where usually decay assumptions are made on the heat kernel or the semigroup, we do not make directly such assumptions and work outside the semigroup framework. The second objective is to replace the usual exponentiel decay of the heat kernel by a polynomial one. All of this is motivated by applications to new settings and examples  which were not covered by previous works. In addition most of spectral multipliers proved before  can be included in our framework.

In order to state explicitly our contributions we first recall that 
$(X, d, \mu)$ satisfies
 the doubling property (see Chapter 3, \cite{CW})
if there  exists a constant $C>0$ such that
\begin{eqnarray}\label{e1.1}
V(x,2r)\leq C V(x, r)\quad \forall\,r>0,\,x\in X.   
\end{eqnarray}
 Note that the doubling property  implies the following
strong homogeneity property,
\begin{equation}
V(x, \lambda r)\leq C\lambda^n V(x,r)
\label{e1.2}
\end{equation}
 for some $C, n>0$ uniformly for all $\lambda\geq 1$ and $x\in X$.
In Euclidean space with Lebesgue measure, the parameter $n$ corresponds to
the dimension of the space, but in our more abstract setting, the optimal $n$
 need not even be an integer. 

Let $\tau>0$ be a fixed positive parameter and suppose that $A$ is a bounded self-adjoint operator on $L^2(X, \mu)$ which satisfies the following  polynomial off-diagonal decay
\begin{equation}\label{Int-0}
\|P_{B(x,\tau)}{V_{{\tau}}^{\sigma_p}}AP_{B(y,\tau)}\|_{p\rightarrow
2}<C\left(1+\frac{d(x,y)}{\tau}\right)^{-n-a} \quad \forall x,y \in X
\end{equation}
with $\sigma_p={1/p}-{1/2}$ and $P_{B(x,\tau)}$ is the projection on the open ball $B(x,\tau)$. 
We prove that if $a>[n/2]+1$ and $F$ is a bounded Borel function such that
 $F\in  H^s(\mathbb R)$ for some $s>n(1/p-1/2)+1/2$,
then  
$$
\|F(A)A\|_{p\to p} \leq C\|F\|_{H^s}.
$$
Note that the operator $A$ which we discuss here cannot be, in a natural way, considered as a part of a semigroups framework.
See Theorem \ref{th3.1} below for more additional information. 
In the particular case where $A = e^{-\tau L}$ for some non-negative (unbounded) self-adjoint operator $L$  with constant in \eqref{Int-0} independent of $\tau$, we obtain for  $s>n(1/p-1/2)+1/2$
	\begin{equation}\label{Int-1}
	\sup_{\tau>0}\|F(\tau L)\|_{p\to p}\leq C\|F\|_{H^s}. 
	\end{equation}
	As mentioned previously, this latter property implies Bochner-Riesz $L^p$-summability with index $\alpha > n(1/p-1/2)$. See Corollary \ref{coro3.2}. 
	
	Some significant spectral multiplier results for operators satisfying polynomial estimates were considered  by Hebisch in
\cite{Heb2} and indirectly also in \cite{JN94, JN95} by Jensen and Nakamura. Our results are inspired by ideas initiated in \cite{DN, Heb2, JN94, JN95, MM}.
 
Following \cite{COSY} we introduce the following  restriction type estimate
\begin{equation}\label{Int-3}
\big\|F(A)AV^{\sigma_p}_{ \tau} \big\|_{p\to 2} \leq C\|F\|_{q}, \ \ \  \ \ \ \ \ \ \ \   \sigma_p= {1\over p}-{1\over 2}.
\end{equation}
We then prove a sharper spectral multiplier result  under this condition. Namely, 
\begin{equation}
\label{Int-2}
 \|AF(  {A})\|_{p\to p} \leq
C_p\|F\|_{W^{s,q}}
\end{equation}
for   $F \in  W^{s,q}({\mathbb R})$ for some
$$ s>
n \left({1\over p}-{1\over 2}\right) +\frac{n}{4[a]}.
$$
 We refer to Theorem \ref{th4.5} for the precise statement.  We prove several other results such as bounds for $e^{itA}A$ on $L^p$ for $p$ as in \eqref{Int-0}. The proofs are very much based on Littlewood-Paley type theory, commutator estimates and amalgam spaces 
 \cite{BDN, DN, JN94}. 
 
Our result can be applied to many examples. Obviously, if the $A$ has an exponential decay (e.g. a Gaussian upper bound) then it satisfies the previous polynomial off-diagonal decay. Hence our results apply to a wide class of elliptic operators on Euclidean domains or on Riemannian manifolds. They also apply in cases where the heat kernel has polynomial decay. This is the case for example for fractional powers of elliptic or Schr\"odinger operators. In the last section we discuss applications to Markov chains. We also study  spectral multipliers (and hence Bochner-Riesz means) for random walk on $\Z^n$. To be more precise, we consider
\begin{equation*}
Af({\bf {d}}) := \frac1{2n}\sum_{i=1}^{n}\sum_{j=\pm 1} f({\bf {d}}+j{\bf {e}}_i)
\end{equation*}
where ${\bf {e}}_i=(0, \ldots, 1, \ldots 0 )$. We prove for appropriate function $F$ 
\begin{eqnarray*}
\sup_{t>1} \|F(t(I-A)\|_{p \to p} \le C \|F\|_{H^s}
\end{eqnarray*}
for any $s> n|1/p-1/2|$.
If in addition, if 
{\rm supp}~$F\subset [0,1/n]$ and
 \begin{eqnarray*}
 \sup_{1/n>t>0}\|\eta F(t\cdot)\|_{H^s}<\infty
\end{eqnarray*}
for some $s> n|1/p-1/2|$ and $\eta$ is a non-trivial $C_c^\infty(0, \infty)$ function, 
then $F(I-A)$ is bounded on $L^p$ if $1<p< (2n+2)/(n+3)$ and  weak type $(1,1)$ if $p=1$.
This result is similar to the sharp spectral multiplier theorem for the standard Laplacian on $\R^n$. Here again the operator $A$ cannot be, in a natural way,
included in a semigroups framework.






\subsection{ Notations and assumptions}

In the sequel we always assume that the considered ambient space is  a separable metric measure space
  $(X,d,\mu)$ with metric $d$ and Borel measure $\mu$.
We denote by
$B(x,r) :=\{y\in X,\, {d}(x,y)< r\}$  the open ball
with centre $x\in X$ and radius $r>0$.  We   often  use $B$ instead of $B(x, r)$.
Given $\lambda>0$, we write $\lambda B$ for the $\lambda$-dilated ball
which is the ball with the same centre as $B$ and radius $\lambda r$. For $x, \in X$ and $r > 0$ we set $V(x,r) :=\mu(B(x,r))$ the volume of $B(x,r)$.  We set
\begin{eqnarray}\label{e2.1}
V_r(x):=V(x,r), \ \ \ r>0, \ \ x\in X.
\end{eqnarray}
We will often write $V(x)$ in place of $V_1(x).$\\
For $1\le p\le+\infty$, we denote by $\|f\|_p$ the 
norm of  $f\in L^p(X)$,  $\langle .,. \rangle$
the scalar product of $L^2(X)$, and if $T$ is a bounded linear operator from $
L^p(X)$ to $L^q(X)$ we write $\|T\|_{p\to q} $ for its corresponding operator norm.  For a given $p\in [1, 2)$ we define
\begin{eqnarray}\label{e2.2}
 \sigma_p= {1\over p}-{1\over 2}.
\end{eqnarray}
Given a  subset $E\subseteq X$, we  denote by  $\chi_E$   the characteristic
function of   $E$ and  set
$
P_Ef(x)=\chi_E(x) f(x).
$
For every $x\in X$ and $r>0$. 

Throughout this  paper  {we always assume that the space $X$ is of homogeneous type} in the sense that  it satisfies the classical doubling property \eqref{e1.2} with some constants $C$ and $n$ independent of $x \in X$ and $\lambda \ge 1$. 
In the Euclidean space with Lebesgue measure, $n$ is 
the dimension of the space. 
In our results critical index is always expressed in terms of
homogeneous dimension $n$.\\
Note also that there  exists $c$ and $D, 0\leq D\leq n$ so that
\begin{equation}
V(y,r)\leq c\bigg( 1+{d(x,y)\over r}\bigg )^D V(x,r)
\label{e1.3}
\end{equation}
uniformly for all $x,y\in {  X}$ and $r>0$. Indeed, the
property (\ref{e1.3}) with
$D=n$ is a direct consequence of triangle inequality of the metric
$d$ and the strong homogeneity property. In the cases of Euclidean spaces
${\mathbb R}^n$ and Lie groups of polynomial growth, $D$ can be
chosen to be $0$.

\section{Preliminary results}\label{sec2}
\setcounter{equation}{0}
 In this this section we give some elementary results which will be used later. 
\subsection{A criterion for $L^p$-$L^q$ boundedness for linear operators} \label{sec2.1}
We start with a  
countable partitions of $X$. For every $r>0$, we choose  a sequence $(x_i)_{i=1}^{\infty} \in X$ such that
$d(x_i,x_j)> {r/4}$ for $i\neq j$ and $\sup_{x\in X}\inf_i d(x,x_i)
\le {r/4}$. Such sequence exists because $X$ is separable.
Set $$D= \bigcup_{i\in \N}B(x_i,r/4).$$ Then 
define $Q_i(r)$ by the formula
\begin{eqnarray}\label{e2.111}
Q_i(r) =    B(x_i,r/4)\bigcup\left[B\left(x_i,r/2\right)\setminus
\left(\bigcup_{j<i}B\left(x_j,r/2\right)\setminus D\right)\right],
\end{eqnarray}
 so that $\left(Q_i(r)\right)_{i}$ is a countable partition of $X$
 ($Q_i(r)\cap Q_j(r)=\emptyset$ if $i\neq j$).
Note that $Q_i(r)\subset B_i=B(x_i, r)$ and there exists
a uniform constant $C>0$  depending only on the doubling constants in \eqref{e1.2} 
 such that $\mu(Q_i(r))\geq C\mu(B_i)$. 

We have the following Schur-test for the norm $ \|T\|_{p\to q}$ of  a given linear operator $T$.  
\begin{lemma}\label{le2.2}
Let $T$ be a linear operator   and $1\leq p\leq q\leq \infty$.  For  every $r>0$,
 $$
 \|T\|_{p\to q}\leq \sup_j\sum_i\big\|P_{Q_i(r)}TP_{Q_j(r)}\big\|_{p\to q}+
\sup_i\sum_j\big\|P_{Q_i(r)}TP_{Q_j(r)}\big\|_{p\to q},
 $$
 where   $(Q_i(r))_i$ is  a countable partition of $X$.
 \end{lemma}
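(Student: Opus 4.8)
The plan is to prove the Schur-type bound by decomposing an arbitrary $f \in L^p(X)$ along the partition $(Q_j(r))_j$ and testing $Tf$ against the partition $(Q_i(r))_i$. Write $f = \sum_j P_{Q_j(r)} f =: \sum_j f_j$, where the $f_j$ have pairwise disjoint supports. Then $Tf = \sum_j T f_j$, and since the $Q_i(r)$ also partition $X$ we have $P_{Q_i(r)} Tf = \sum_j P_{Q_i(r)} T P_{Q_j(r)} f$. Set $a_{ij} := \big\| P_{Q_i(r)} T P_{Q_j(r)} \big\|_{p\to q}$. The idea is to bound $\|Tf\|_q$ in terms of the $\ell^q$ (or $\ell^p$) norms of the pieces and the matrix $(a_{ij})$, and then invoke the Schur test for the matrix acting between sequence spaces.

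Concretely, I would first reduce to the case $p = q$ or handle the general $p \le q$ directly as follows. Because the supports $Q_i(r)$ are pairwise disjoint, $\|Tf\|_q^q = \sum_i \|P_{Q_i(r)} Tf\|_q^q$ when $q < \infty$ (with the obvious $\sup$ modification when $q = \infty$); likewise $\|f\|_p^p = \sum_j \|f_j\|_p^p$. Using $\|P_{Q_i(r)} T P_{Q_j(r)} f\|_q \le a_{ij} \|f_j\|_p$ and the triangle inequality in $L^q$, we get $\|P_{Q_i(r)} Tf\|_q \le \sum_j a_{ij} \|f_j\|_p$. Writing $u_j := \|f_j\|_p$ and $v_i := \|P_{Q_i(r)} Tf\|_q$, this says $v_i \le \sum_j a_{ij} u_j$, i.e. $v \le A u$ entrywise with $u \in \ell^p$, $v \in \ell^q$. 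Since $\|Tf\|_q = \|v\|_{\ell^q}$ and $\|f\|_p = \|u\|_{\ell^p}$ (note $\|u\|_{\ell^p} = \|f\|_p$ exactly, and for $p \le q$ one has $\|v\|_{\ell^q} \le \|v\|_{\ell^p}$... ), it suffices to bound $\|A\|_{\ell^p \to \ell^q}$. The classical Schur test for matrices gives $\|A\|_{\ell^p \to \ell^q} \le \|A\|_{\ell^q \to \ell^q}^{\theta}\|A\|_{\ell^p \to \ell^p}^{1-\theta}$ type interpolation, but more simply, for $p \le q$, $\|A\|_{\ell^p\to\ell^q} \le \|A\|_{\ell^p \to \ell^p} \le \big(\sup_j \sum_i a_{ij}\big)^{1/p'}\big(\sup_i \sum_j a_{ij}\big)^{1/p}$ via the standard Schur lemma, and since a product $x^{1/p'} y^{1/p} \le x + y$, we conclude with the claimed sum of the two suprema.

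The main technical point — and the step I expect to require the most care — is the interpolation/reduction ensuring that the matrix Schur bound $\sup_j \sum_i a_{ij} + \sup_i \sum_j a_{ij}$ (rather than its geometric-mean refinement) controls $\|A\|_{\ell^p \to \ell^q}$ uniformly across all $p \le q$, including the endpoints $p = 1$ and $q = \infty$. For $p=1$: $\|Tf\|_q \le \sum_j \|Tf_j\|_q \le \sum_j \big(\sup_i a_{ij} \cdot \text{(count)}\big)$... actually here one uses $\|Tf_j\|_q^q = \sum_i \|P_{Q_i(r)}Tf_j\|_q^q \le \big(\sup_i a_{ij}\big) \cdots$; it is cleanest to just do the two extreme cases $(p,q) = (1,1)$, $(p,q)=(\infty,\infty)$, and $(p,q) = (1,\infty)$ by hand and then interpolate via Riesz–Thorin (or Stein interpolation) on the scale of operators $P_{Q_i(r)} T P_{Q_j(r)}$ viewed through the sequence-space formulation. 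I would in fact present the proof by first establishing the matrix Schur inequality $\|A\|_{\ell^p \to \ell^q} \le \sup_j \sum_i a_{ij} + \sup_i \sum_j a_{ij}$ as a standalone elementary fact for all $1 \le p \le q \le \infty$ (via Hölder on the two sums and Young-type splitting), and then deduce the operator statement from $v \le Au$ and the isometries $\|u\|_{\ell^p} = \|f\|_p$, $\|v\|_{\ell^q} = \|Tf\|_q$. That keeps the argument short and avoids any appeal to heavier interpolation machinery.

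Finally, a small point to address for rigor: the partition $(Q_i(r))_i$ is countable and the sums above are genuinely infinite, so I would note that all interchanges of summation are justified by non-negativity (Tonelli) and that the bound is vacuous unless both suprema are finite, in which case $Tf \in L^q$ follows from the estimate itself applied to $f$ truncated to finitely many $Q_j(r)$, followed by a limiting argument. This makes the statement meaningful even before one knows a priori that $T$ is bounded $L^p \to L^q$.
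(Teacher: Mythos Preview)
Your approach is essentially the same as the paper's: decompose $f$ along the partition, set $a_{ij}=\|P_{Q_i(r)}TP_{Q_j(r)}\|_{p\to q}$ and $c_j=\|P_{Q_j(r)}f\|_p$, reduce to the matrix estimate $\|Tf\|_q\le \|(a_{ij})\|_{\ell^p\to\ell^q}\,\|c\|_{\ell^p}\le \|(a_{ij})\|_{\ell^p\to\ell^p}\,\|f\|_p$, and bound the latter by the Schur test. The only cosmetic difference is that the paper proves the Schur bound $\|(a_{ij})\|_{\ell^p\to\ell^p}\le \max\{\sup_j\sum_i a_{ij},\,\sup_i\sum_j a_{ij}\}$ by interpolating between $p=1$ and $p=\infty$, whereas you invoke the geometric-mean form and then use $x^{1/p'}y^{1/p}\le x+y$; both yield the stated sum of the two suprema, so no genuine divergence.
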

\begin{proof}  The proof is inspired by \cite{GHS}.
 Given a function $f\in L^p(X)$, we have
\begin{eqnarray*}
\|Tf\|^q_{q}&= &\big\|\sum_{i,j}P_{Q_i(r)}TP_{Q_j(r)}f\big\|^q_q\\
&=&\sum_i\Big\|\sum_{j}P_{Q_i(r)}TP_{Q_j(r)}f\Big\|^q_q\\
&\leq&\sum_i \left(\sum_j\|P_{Q_i(r)}TP_{Q_j(r)}\|_{p\to q}\|P_{Q_j(r)}f\|_p\right)^q\\
&=:&\big\|\sum_j a_{ij} c_j\big\|_{\ell^q}^q,
\end{eqnarray*}
where $a_{ij}=\|P_{Q_i(r)}TP_{Q_j(r)}\|_{p\to q}$ and $c_j=\|P_{Q_j(r)}f\|_p$.

Next note that,  for all $1 \le  p \le \infty$,
\begin{equation}  \label{ckl}
\sum_k \left(\sum_l |a_{lk} c_l|\right)^p \le \left(\max\left\{ \sup_l
\sum_k |a_{lk}|, \, \sup_k \sum_l |a_{lk}|\right\} \right)^p\sum_n |c_n|^p,
\end{equation}
with the obvious meaning for $p=\infty$, where $c_{lk}$, $a_l$ are sequences of real or complex numbers.
Indeed, for $p=1$ or  $p=\infty$, (\ref{ckl}) is easy to obtain. Then we obtain (\ref{ckl}) for all $1\le p \le \infty$ by interpolation.
Observe that
\begin{eqnarray*}
\|Tf\|_q\leq \|  (a_{ij})\|_{\ell^p\to \ell^q}\|c_j\|_{\ell^p}\le \| (a_{ij})\|_{\ell^p\to \ell^p}\|f\|_{p}.
 \end{eqnarray*}
 The lemma follows from \eqref{ckl} and the above inequality.
\end{proof}

 \subsection{Operators with kernels satisfying  off-diagonal polynomial decays}
 For a given  function $W:X\rightarrow \mathbb{R}$,   we denote by   $M_{W}$ the  multiplication  operator  by $W$, that is
$$
(M_{W}f)(x)=W(x)f(x).
$$
In the sequel, we will  identify  the operator $M_W$ with  the function $W$. This means that, if $T$ is a linear operator,
we will denote by  $W_1TW_2$   the operators  $M_{W_1}TM_{W_2}$. In other words,
$$
W_1TW_2 f(x): =W_1(x)T (W_2f)(x).
$$
 Following  \cite{BK1, BK2, BK3},   we  introduce the following  estimates which are interpreted as polynomial off-diagonal estimates.

 \begin{definition}
 Let $A$ be a  self-adjoint operator  on $L^2(X)$ and $\tau>0$ be  a  constant.   For $1\leq p<2$ and $a>0$, we say that
$A$ satisfies the property \eqref{pVEp2}  if there exists a constant $C>0$ such that for all  $x,y\in X$,
\begin{equation*}\label{pVEp2}
\tag{${\rm PVE}^a_{p,2}(\tau)$}\|P_{B(x,\tau)}{V_{{\tau}}^{\sigma_p}}AP_{B(y,\tau)}\|_{p\rightarrow
2}<C\left(1+\frac{d(x,y)}{\tau}\right)^{-n-a} 
\end{equation*}
with $\sigma_p={1/p}-{1/2}.$
\end{definition}

\smallskip

By  H\"older's inequality and duality, the condition \eqref{pVEp2} implies  that
\begin{eqnarray}\label{e2.223}
 \|P_{B(x,\tau)}AP_{B(y,\tau)}\|_{p\to p} + \|P_{B(x,\tau)}AP_{B(y,\tau)}\|_{p'\to p'}
 \leq C \left(1+\frac{d(x,y)}{\tau}\right)^{-n-a}.
\end{eqnarray}
By interpolation,
\begin{eqnarray} \label{e2.3}
 \|P_{B(x,\tau)}AP_{B(y,\tau)}\|_{2\to 2}
 \leq C \left(1+\frac{d(x,y)}{\tau}\right)^{-n-a}.
\end{eqnarray}

\begin{remark}
Suppose that \eqref{pVEp2} holds for some $p<2$.  Then  ${\rm (PVE^a_{q,2}) }$   holds for every $q \in [p, 2].$
This can be shown by applying  complex interpolation to the family
$$F(z) = P_{B(x,\tau)}{V_{{\tau}}^{z \, \sigma_p }}AP_{B(y,\tau)}.$$
For $z= 1+is$ we  use  \eqref{pVEp2} and for $z = is$ we use \eqref{e2.3}.
\end{remark}


In the sequel,  
for a given $\tau>0$   we fix a countable partition $ \{Q_\ell(\tau)\}_{\ell=1}^{\infty} $
 of $X$  and and  a sequence $(x_{\ell})_{\ell=1}^{\infty} \in X$  as in Section 2.1. First, we have 
 the following result.

\begin{proposition}\label{prop2.1} Let    $ 1\leq p\leq 2$,
and $A$ be a self-adjoint operator on $L^2(X)$. Assume that condition \eqref{pVEp2}
holds for some $\tau>0$ and $a>0$.  There exists a constant $C>0$ independent of $\tau>0$ such that
\begin{eqnarray}\label{e2.e1}
\big\|V_{\tau}^{{\sigma_p}}A\big\|_{p\to 2}\leq C.
\end{eqnarray}
As a consequence, the operator $A$ is a bounded operator on $L^p(X)$, and
 $
\|A\|_{p\to p}+\|A\|_{2\to 2}\leq C.
 $
\end{proposition}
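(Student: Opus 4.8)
The plan is to bound $\|V_\tau^{\sigma_p} A\|_{p\to 2}$ by applying the Schur-type test from Lemma~\ref{le2.2} — or rather its natural analogue for the $L^p\to L^2$ norm — to the operator $T = V_\tau^{\sigma_p} A$ with the partition $\{Q_\ell(\tau)\}_\ell$. First I would write, for $f\in L^p(X)$,
\begin{eqnarray*}
\|V_\tau^{\sigma_p} A f\|_2^2 = \Big\| \sum_{i,j} P_{Q_i(\tau)} V_\tau^{\sigma_p} A P_{Q_j(\tau)} f\Big\|_2^2 = \sum_i \Big\| \sum_j P_{Q_i(\tau)} V_\tau^{\sigma_p} A P_{Q_j(\tau)} f\Big\|_2^2,
\end{eqnarray*}
using disjointness of the $Q_i(\tau)$ in the last step. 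Then I would estimate the inner $L^2$ norm by $\sum_j \|P_{Q_i(\tau)} V_\tau^{\sigma_p} A P_{Q_j(\tau)}\|_{p\to 2}\,\|P_{Q_j(\tau)} f\|_p$, and control the matrix $a_{ij} := \|P_{Q_i(\tau)} V_\tau^{\sigma_p} A P_{Q_j(\tau)}\|_{p\to 2}$ as a map $\ell^p \to \ell^2$.

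The key input is the pointwise bound on $a_{ij}$. Since $Q_i(\tau)\subset B(x_i,\tau)$ and $Q_j(\tau)\subset B(x_j,\tau)$, the hypothesis \eqref{pVEp2} gives
\begin{eqnarray*}
a_{ij} \le \|P_{B(x_i,\tau)} V_\tau^{\sigma_p} A P_{B(x_j,\tau)}\|_{p\to 2} \le C\Big(1+\frac{d(x_i,x_j)}{\tau}\Big)^{-n-a}.
\end{eqnarray*}
Because the centres $x_\ell$ are $\tau/4$-separated and lie in a doubling space, for each fixed $i$ the number of indices $j$ with $d(x_i,x_j)\le k\tau$ is $O(k^n)$; hence $\sup_i \sum_j a_{ij} \le C\sum_{k\ge 0} k^n (1+k)^{-n-a} < \infty$ precisely because $a>0$, and symmetrically $\sup_j\sum_i a_{ij}<\infty$. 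Thus $(a_{ij})$ is bounded on $\ell^1$ and on $\ell^\infty$ uniformly in $\tau$, so by interpolation (exactly as in the proof of \eqref{ckl} in Lemma~\ref{le2.2}) it is bounded on $\ell^p$ and on $\ell^2$. Combining, $\|V_\tau^{\sigma_p} A f\|_2 \le \|(a_{ij})\|_{\ell^p\to\ell^2}\,\|(c_j)\|_{\ell^p}$ with $c_j = \|P_{Q_j(\tau)}f\|_p$, and since $p\le 2$ the embedding $\ell^p\hookrightarrow\ell^2$ gives $\|(c_j)\|_{\ell^2}\le\|(c_j)\|_{\ell^p} = \|f\|_p$ (using disjointness of the partition and $p\le 2$). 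This yields \eqref{e2.e1} with $C$ independent of $\tau$.

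For the consequence, I would note $V_\tau^{-\sigma_p}$ maps $L^2\to L^p$ with norm... — more carefully, one writes $A = V_\tau^{-\sigma_p}\big(V_\tau^{\sigma_p} A\big)$, but since $V_\tau^{-\sigma_p}$ is not bounded $L^2\to L^p$ in general one should instead use \eqref{e2.223}: summing the $L^p\to L^p$ estimates $\|P_{B(x_i,\tau)} A P_{B(x_j,\tau)}\|_{p\to p} \le C(1+d(x_i,x_j)/\tau)^{-n-a}$ over the partition via Lemma~\ref{le2.2} gives $\|A\|_{p\to p}\le C$, and likewise $\|A\|_{p'\to p'}\le C$; interpolation then gives $\|A\|_{2\to 2}\le C$ (this is just \eqref{e2.3} summed up). The main obstacle is purely bookkeeping: making sure the counting estimate $\#\{j : d(x_i,x_j)\in[k\tau,(k+1)\tau)\} = O(k^{n-1})$ (or $O(k^n)$ cumulatively) is applied correctly with constants depending only on the doubling constant, and checking that the interpolation argument for the matrix norm is legitimate — but both are routine given Lemma~\ref{le2.2} and the separation of the centres. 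No genuine difficulty is expected; the value $a>0$ is exactly what makes the geometric series converge.
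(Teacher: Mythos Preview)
Your proposal is correct and follows essentially the same route as the paper: apply the Schur-type test of Lemma~\ref{le2.2} to $V_\tau^{\sigma_p}A$ over the partition $\{Q_\ell(\tau)\}$, bound the block norms by \eqref{pVEp2}, and control the resulting row/column sums via the doubling counting estimate $\#\{j: d(x_i,x_j)<2^{k+1}\tau\}=O(2^{kn})$ so that the series converges precisely because $a>0$; the $L^p$ consequence is then obtained from \eqref{e2.223} in the same way. The only cosmetic difference is that the paper invokes Lemma~\ref{le2.2} directly (which already packages the $\ell^p\to\ell^p$ interpolation step in \eqref{ckl}), whereas you unpack the matrix argument by hand.
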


\begin{proof} By Lemma~\ref{le2.2} and condition \eqref{pVEp2},   one is lead to estimate
\begin{eqnarray}\label{e2.e2}
  \sup_j \sum_i \left(1 + \frac{d(x_i, x_j)}{\tau} \right)^{-n-a}\leq C<\infty.
\end{eqnarray}
Note that for every   $k\geq 1$, 
\begin{eqnarray} \label{qq}
 \sup_{j, \, \tau}\# \big\{i:   2^k \tau \leq d(x_i, x_j)< 2^{k+1}\tau \big\}
 &\le&
  \sup_{j, \, \tau} \sup_{ \{i:   2^k \tau \leq d(x_i, x_j)< 2^{k+1}\tau  \}}
   {V(x_i, 2^{k+3}\tau  )  \over   V(x_i, \tau)}\nonumber\\
    &\le& C 2^{kn}< \infty.
\end{eqnarray}
This implies that  for every $j\geq 1$ and $k\geq 1$,
$$
\sum_{i:\, 2^{k}\tau \leq d(x_i, x_j)<2^{k+1} \tau} \left(1 + \frac{d(x_i, x_j)}{\tau} \right)^{-n-a}
\leq C (1+2^k)^{-n-a}2^{kn}
 \leq  C 2^{-ka}$$
and we sum over $k$ to get \eqref{e2.e2}. 

 The boundedness of the operator $A$ on $L^p$ is proved in the same way by applying \eqref{e2.223}.
\end{proof}

\smallskip

Note that when the operator $A$ has integral kernel $K_A(x,y)$
satisfying the following pointwise estimate 
$$ 
|K_A(x,y)|\leq  C V(x, \tau)^{-1}\left(1+\frac{d(x,y)}{\tau}\right)^{-n-a}
$$
 for some $\tau>0$ and all $x, y\in X$, then $A$ satisfies the property \eqref{pVEp2} with $p=1$. 
 Conversely, we have the following result.
 
 \begin{proposition} \label{prop2.5} Suppose that $a>D$ where $D$ is the constant
 	in \eqref{e1.3}. 
If the operator $A$ satisfies  the property \eqref{pVEp2} for some $\tau>0$ and $p=1$, then   the operator $A^2$ has integral kernel $K_{A^2}(x,y)$
satisfying the following pointwise estimate: For any $\epsilon>0$, there exists a constant $C>0$ independent of $\tau$ such that
 \begin{eqnarray} \label{e2.e3}
|K_{A^2}(x,y)|\leq  C V(x, \tau)^{-1}\left(1+\frac{d(x,y)}{\tau}\right)^{-(a-D-\epsilon)} 
 \end{eqnarray} 
 for all $x, y\in X$.
 \end{proposition}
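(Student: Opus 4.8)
The plan is to obtain the kernel bound for $A^2$ by writing $A^2 = A\cdot A$ and decomposing both factors along the fixed partition $\{Q_\ell(\tau)\}$. Concretely, for fixed $x,y\in X$, pick indices $\ell_x,\ell_y$ with $x\in Q_{\ell_x}(\tau)$ and $y\in Q_{\ell_y}(\tau)$, and insert a resolution of the identity $\sum_m P_{Q_m(\tau)}$ between the two copies of $A$, so that
\begin{equation*}
K_{A^2}(x,y) = \sum_m K\big(P_{Q_{\ell_x}(\tau)}A P_{Q_m(\tau)} \cdot P_{Q_m(\tau)} A P_{Q_{\ell_y}(\tau)}\big)(x,y).
\end{equation*}
The point is that the hypothesis $\eqref{pVEp2}$ with $p=1$ gives, via \eqref{e2.223} and H\"older, good operator-norm control of the pieces $P_{Q_i(\tau)}AP_{Q_j(\tau)}$ as $1\to1$ and $2\to2$ maps; but to extract a pointwise kernel bound we need an $L^1\to L^\infty$ estimate on the composition, and a single factor only maps $L^1\to L^2$. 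Composing the $L^1\to L^2$ bound on the second factor with the dual $L^2\to L^\infty$ bound on the first factor (both supplied by $\eqref{pVEp2}$, using self-adjointness of $A$ to pass between $P_iAP_j$ and its adjoint) yields that $P_{Q_i(\tau)}A P_{Q_m(\tau)}AP_{Q_j(\tau)}$ maps $L^1\to L^\infty$ with norm
\begin{equation*}
\lesssim V(x_i,\tau)^{-1/2}V(x_j,\tau)^{-1/2}\Big(1+\tfrac{d(x_i,x_m)}{\tau}\Big)^{-n-a}\Big(1+\tfrac{d(x_m,x_j)}{\tau}\Big)^{-n-a},
\end{equation*}
and since the kernel of this piece is supported in $Q_i(\tau)\times Q_j(\tau)$, the operator norm dominates the sup of the kernel there. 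Summing over $m$ and recalling $V(x_i,\tau)\gtrsim\mu(Q_i(\tau))$ together with the volume comparison $V(x_j,\tau)\lesssim (1+d(x_i,x_j)/\tau)^D V(x_i,\tau)$ from \eqref{e1.3} gives the claimed bound with the loss $D$ in the exponent.

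The main step is therefore the summation over the intermediate index $m$: one must show
\begin{equation*}
\sum_m \Big(1+\tfrac{d(x_i,x_m)}{\tau}\Big)^{-n-a}\Big(1+\tfrac{d(x_m,x_j)}{\tau}\Big)^{-n-a}\lesssim \Big(1+\tfrac{d(x_i,x_j)}{\tau}\Big)^{-(n+a)} \cdot (\text{harmless factor}),
\end{equation*}
but in fact one only needs decay of order $a-D-\epsilon$, so there is room to spare. I would split the sum into the region where $d(x_m,x_i)\le \tfrac12 d(x_i,x_j)$ (there $d(x_m,x_j)\gtrsim d(x_i,x_j)$, so pull out one factor $(1+d(x_i,x_j)/\tau)^{-n-a}$ and sum the remaining $(1+d(x_i,x_m)/\tau)^{-n-a}$ over all $m$, which converges by the counting estimate \eqref{qq}, i.e. exactly the computation in Proposition~\ref{prop2.1}), the symmetric region where $d(x_m,x_j)\le\tfrac12 d(x_i,x_j)$, and the region where both $d(x_m,x_i)$ and $d(x_m,x_j)$ are $\gtrsim d(x_i,x_j)$, where one factor already gives $(1+d(x_i,x_j)/\tau)^{-n-a}$ and the other is summed freely. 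The volume factors $V(x_i,\tau)^{-1/2}V(x_j,\tau)^{-1/2}$ appearing in the individual pieces need to be reconciled with the lattice points $x_m$; here one uses $V(x_i,\tau)^{1/2}V(x_j,\tau)^{1/2} \ge$ (up to constants) $V(x_m,\tau)$ times a power of $(1+d/\tau)^{-D}$, which is where the dependence on $D$ and the parameter $\epsilon$ enters and forces the slightly degraded exponent $a-D-\epsilon$ rather than $a-D$.

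The one genuine obstacle I anticipate is bookkeeping the volume normalizations correctly through the composition and the sum over $m$: a single factor $P_{Q_i}AP_{Q_m}$ in the $1\to 2$ norm comes normalized by $V_\tau^{\sigma_1}=V_\tau^{1/2}$ on the wrong side, so one has to be careful that inserting $V(x_m,\tau)^{-1/2}V(x_m,\tau)^{1/2}$ between the factors and using the doubling/reverse-doubling comparison of $V(x_m,\tau)$ with $V(x_i,\tau)$ and $V(x_j,\tau)$ does not produce an uncontrolled product of volumes. Using \eqref{e1.3} in the form $V(x_m,\tau)\le c(1+d(x_i,x_m)/\tau)^D V(x_i,\tau)$ absorbs the extra volume into the distance decay at the cost of $D$ powers, and choosing the split of $D$ between the $x_i$-side and $x_j$-side estimates (and sacrificing $\epsilon$ for summability) closes the argument. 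Everything else — the $L^1$–$L^2$–$L^\infty$ composition, passing to kernels on the product of partition cells, and the geometric sum — is routine given Lemma~\ref{le2.2}, \eqref{e2.223} and the counting bound \eqref{qq}.
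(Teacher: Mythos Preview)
Your approach is essentially the paper's: insert the partition between the two copies of $A$, compose the $L^1\to L^2$ bound from \eqref{pVEp2} with its dual $L^2\to L^\infty$ bound, compare volumes via \eqref{e1.3}, and sum over the intermediate index using the counting estimate \eqref{qq}. The paper organizes the sum over the intermediate point by dyadic annuli around $x$ rather than by your trichotomy, but this is cosmetic.

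One correction to your bookkeeping: in your displayed bound for $\|P_{Q_i}AP_{Q_m}AP_{Q_j}\|_{1\to\infty}$, the volume factor that actually comes out is $V(x_m,\tau)^{-1}$, not $V(x_i,\tau)^{-1/2}V(x_j,\tau)^{-1/2}$. Indeed, the weight $V_\tau^{1/2}$ in \eqref{pVEp2} sits on the \emph{left} of $A$, so $\|P_{Q_m}AP_{Q_j}\|_{1\to 2}\lesssim V(x_m,\tau)^{-1/2}(1+d(x_m,x_j)/\tau)^{-n-a}$, and by duality $\|P_{Q_i}AP_{Q_m}\|_{2\to\infty}=\|P_{Q_m}AP_{Q_i}\|_{1\to 2}\lesssim V(x_m,\tau)^{-1/2}(1+d(x_i,x_m)/\tau)^{-n-a}$; both halves carry $V(x_m,\tau)^{-1/2}$. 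This actually makes the ``obstacle'' you flagged simpler than you feared: a single application of \eqref{e1.3} in the form $V(x_m,\tau)^{-1}\le C(1+d(x_i,x_m)/\tau)^{D}V(x_i,\tau)^{-1}$ converts the intermediate volume to the fixed one and deposits the $D$ loss into the first decay factor, after which the sum over $m$ closes exactly as you describe.
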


 \begin{proof}  For every  $x,y\in X$ and $\tau>0$, we write
  \begin{eqnarray*}
 P_{B(x, \tau)} A^2 P_{B(y, \tau)} 
= P_{B(x, \tau)} AP_{B(x, \tau)} A P_{B(y, \tau)}  +\sum_{\ell=0}^{\infty} \sum_{i: 2^{\ell}\tau \leq d(x_i, x)< 2^{\ell+1}\tau}
 P_{B(x, \tau)} A  P_{B(x_i, \tau)} 
 A  P_{B(y, \tau)}.   
 \end{eqnarray*}  
From  the property \eqref{pVEp2}  with $p=1$  we get 
 \begin{eqnarray*}
 \left\| P_{B(x, \tau)} A^2 P_{B(y, \tau)}\right\|_{L^1\to L^{\infty}} 
 &\leq& C V(x, \tau)^{-1 }  
  \left(1+\frac{d(x ,y)}{\tau}\right)^{-n-a}\\
  & &+ C\sum_{\ell=0}^{\infty} \sum_{i: 2^{\ell}\tau \leq d(x_i, x)< 2^{\ell+1}\tau} V(x_i, \tau)^{-1} 
  \left(1+\frac{d(x,x_i)}{\tau}\right)^{-n-a}\left(1+\frac{d(x_i,y)}{\tau}\right)^{-n-a}.   
 \end{eqnarray*} 
 This, in combination with the fact that $V(x_i, \tau)^{-1} \leq C2^{\ell D} V(x, \tau)^{-1} $ for every $\ell\geq 0 $ and 
 $2^{\ell}\tau \leq d(x_i, x)< 2^{\ell+1}\tau,$   and the property \eqref{qq},  implies that
 \begin{eqnarray*}
 \left\| P_{B(x, \tau)} A^2 P_{B(y, r)}\right\|_{L^1\to L^{\infty}} 
 &\leq&   C V(x, \tau)^{-1 }  
  \left(1+\frac{d(x ,y)}{\tau}\right)^{-(a-D-\epsilon)} 
 \end{eqnarray*}
 for any $\epsilon>0$.
 Hence   it follows   that  \eqref{e2.e3} holds. 
 This completes the proof of Proposition~\ref{prop2.5}. 
 \end{proof}

 Finally, we mention that if $A$ is the semigroup $e^{-t L}$ generated by  (minus) a non-negative self-adjoint 
 operator $L$, then the condition  ${\rm PVE}^a_{p,2}(t^{1/m})$ holds for some $m\geq 1$ if the corresponding heat kernel $K_t(x,y)$ has a polynomial decay
\begin{equation}\label{e-poly}
|K_{t}(x,y)|\leq  C V(x, t^{1/m})^{-1}\left(1+\frac{d(x,y)}{t^{1/m}}\right)^{-n-a}.
\end{equation}
  it is known that the heat kernel satisfies a Gaussian upper bound, for a wide class of differential operators of order $ m \in 2 \NN$ on Euclidean domains
   or Riemannian manifolds (see for example \cite{Dav89}).
   In this case \eqref{e-poly} holds with any arbitrary  $a > 0$.

\bigskip

\section{Spectral multipliers   via  polynomial  off-diagonal decay kernels }
\setcounter{equation}{0}

 In this section  we  prove   spectral multiplier results corresponding to compactly supported  functions
  in the abstract setting of self-adjoint operators  on homogeneous spaces.
  Recall that we assume 
   that  $(X, d, \mu)$ is  a  metric measure  space satisfying the doubling property
   and  $n$ is the   homogeneous  dimension from  condition \eqref{e1.2}.
   We use the standard notation $H^{s}(\R) $ for the Sobolev space
$\|F\|_{H^{s}}  = \|(I-d^2/dx^2)^{s/2}F\|_{2}$.

\begin{theorem}\label{th3.1}
Suppose that $A$ is a bounded self-adjoint operator on $L^2(X)$
which satisfies condition \eqref{pVEp2} for some $\tau>0$, $1\leq p<2$ and  $a>[n/2]+1$.
If $F$ is a bounded Borel function such that
 $F\in  H^s(\mathbb R)$ for some $s>n(1/p-1/2)+1/2$,
then  there exists constant $C>0$ such that
$$
\|F(A)A\|_{p\to p} \leq C\|F\|_{H^s}.
$$
The constant $C$ above does not depend on the choice of $\tau>0$. 
In addition, if we assume that  $A$ is a nonnegative  self-adjoint operator  and $\supp F\subset [-1, 1]$, then   there exists a constant $C>0$ 
which is also independent of $\tau$  such that
$$
\|F(-\log A)\|_{p\to p}\leq C\|F\|_{H^s}.
$$
\end{theorem}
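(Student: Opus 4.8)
The plan is to deduce the multiplier bound from a discretized Littlewood--Paley decomposition of $F$ combined with the Schur test of Lemma~\ref{le2.2} applied to the operators $G(A)A$, where $G$ ranges over the pieces of $F$. First I would fix a dyadic partition of unity $1 = \sum_{j\geq 0}\psi_j$ on $[-1,1]$ (or rather on the spectrum of $A$, which is a compact subset of $[-1,1]$ once $\|A\|_{2\to2}\leq C$ by Proposition~\ref{prop2.1}), with $\psi_j$ supported where $|\lambda|\sim 2^{-j}$, and write $F(A)A = \sum_j (\psi_j F)(A)A$. The key auxiliary estimate I expect to need is an off-diagonal bound for each localized piece: for the kernel (or $p\to 2$ mapping property weighted by $V_\tau^{\sigma_p}$) of $(\psi_j F)(A)A$ one should get decay of the form $(1 + d(x,y)/\tau)^{-n-\delta}$ with a constant controlled by $2^{-j\theta}\|\psi_j F\|_{\infty}$ or by a suitable local Sobolev norm of $\psi_j F$, the gain $2^{-j\theta}$ coming from the factor $A$ (spectrally of size $2^{-j}$ on the support of $\psi_j$) together with iterated use of \eqref{pVEp2}; the hypothesis $a > [n/2]+1$ is what guarantees enough iterations of $A$ can be composed while keeping summable off-diagonal decay, exactly as in Proposition~\ref{prop2.5}. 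Summing the Schur bounds over $i$ (using \eqref{qq} as in Proposition~\ref{prop2.1}) and then over $j$, the $H^s$ norm with $s > n(1/p-1/2)+1/2$ enters precisely to make $\sum_j 2^{jn(1/p-1/2)} \|\psi_j F\|_{*} \lesssim \|F\|_{H^s}$ by Cauchy--Schwarz, the extra $1/2$ being the usual $\ell^2 \to \ell^1$ loss and the $n(1/p-1/2)$ the volume factor. This proves the first inequality $\|F(A)A\|_{p\to p}\leq C\|F\|_{H^s}$, with constants independent of $\tau$ since all estimates above are scale-invariant in $\tau$.

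For the second assertion, with $A$ nonnegative and $\operatorname{supp}F\subset[-1,1]$, I would reduce $F(-\log A)$ to the first case by composition. Writing $G(\lambda) := F(-\log \lambda)$ for $\lambda \in (0,1]$ and noting that on the spectrum of $A$ (contained in $[0,C]$, but the relevant part for $F(-\log A)$ is $\lambda \in (0,1]$ since $-\log\lambda \in [-1,1]$ forces $\lambda \in [e^{-1}, e]\cap[0,\infty)$, hence a bounded interval bounded away from $0$), one has $F(-\log A) = (\chi G)(A)$ for a smooth cutoff $\chi$ supported in that bounded interval away from the origin. On such an interval $\lambda \mapsto -\log\lambda$ is a smooth diffeomorphism with all derivatives bounded, so $\chi G \in H^s(\R)$ with $\|\chi G\|_{H^s}\leq C\|F\|_{H^s}$. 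Since $\chi G$ is supported away from $0$, multiplying by $A$ is harmless: $(\chi G)(A) = H(A)A$ with $H(\lambda) = (\chi G)(\lambda)/\lambda$, still in $H^s$ with comparable norm because $1/\lambda$ is smooth and bounded on the support. Applying the first part of the theorem to $H$ gives $\|F(-\log A)\|_{p\to p} = \|H(A)A\|_{p\to p}\leq C\|H\|_{H^s}\leq C\|F\|_{H^s}$, again with $\tau$-independent constant.

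I expect the main obstacle to be the localized off-diagonal estimate for $(\psi_j F)(A)A$: one cannot appeal to finite speed of propagation or to a functional calculus built on the semigroup, so the decay must be extracted purely from iterating \eqref{pVEp2}. Concretely, one approximates $\psi_j F$ (or $F$ itself restricted near scale $2^{-j}$) by polynomials in $A$ — which is where the boundedness of $A$ and the compact spectrum are essential — or expands $\psi_j F(\lambda)\lambda$ in a series whose partial sums are polynomials in $A$, and estimates $P_{B(x,\tau)}V_\tau^{\sigma_p} A^k P_{B(y,\tau)}$ by chaining $k$ copies of the partition-of-unity identity $I = \sum_\ell P_{Q_\ell(\tau)}$ and \eqref{pVEp2}. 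Each composition costs one factor $(1+d/\tau)^{-n-a}$ and a summation over an intermediate partition point, so one needs $a$ large enough (beyond $[n/2]+1$) that after the required number of compositions the leftover decay exponent stays above $n$; tracking how the polynomial degree needed to approximate $\psi_j F$ within tolerance $\varepsilon$ grows in $j$, versus the loss $jn(1/p-1/2)$ we can afford, is the delicate bookkeeping. The Sobolev exponent $s > n(1/p-1/2) + 1/2$ should come out exactly matched to this bookkeeping, and the role of $a > [n/2]+1$ (rather than merely $a>0$) is to leave room for the $[n/2]+1$ differentiations implicit in converting an $H^s$ bound on $F$ into polynomial-approximation estimates.
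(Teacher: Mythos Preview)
Your first part has a genuine gap: the dyadic decomposition you propose is on the wrong variable. You localize in the \emph{spectral} variable $\lambda$ (pieces $\psi_j$ supported where $|\lambda|\sim 2^{-j}$), but the $H^s$ norm of $F$ measures decay of $\widehat{F}$, i.e.\ smoothness of $F$, and says nothing whatsoever about the size of $F(\lambda)$ for small $\lambda$. The claimed bound $\sum_j 2^{jn\sigma_p}\|\psi_j F\|_{*}\lesssim\|F\|_{H^s}$ is simply false: take $F$ smooth and equal to $1$ on $[-1,1]$; then $\|\psi_j F\|_\infty\sim 1$ for every $j$ and the left side diverges while $\|F\|_{H^s}$ is finite. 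The single factor of $A$ buys only one power $2^{-j}$, not $2^{-jn\sigma_p}$. Consequently the bookkeeping you sketch --- polynomial approximation of $\psi_jF$, iterated compositions of \eqref{pVEp2} --- never links up with the Sobolev hypothesis on $F$, and the role you assign to $a>[n/2]+1$ (controlling iterated compositions of $A$) is not the actual mechanism.

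The paper's route is quite different. The key preparatory result is Theorem~\ref{le3.4}: $\|e^{itA}A\|_{p\to p}\le C(1+|t|)^{n\sigma_p}$ for all $t\in\R$, proved via commutator estimates $\|\mathrm{ad}_m^k(A)\|_{2\to 2}\le C$ for $0\le k\le[n/2]+1$ (this is where $a>[n/2]+1$ enters, to absorb the weight $(d(x_i,x_j)/\tau)^k$ against the off-diagonal decay), followed by Lemma~\ref{le3.3} and the amalgam spaces $X^{p,2}_\tau$ to pass from $L^2$ commutator bounds to $L^p$. Once Theorem~\ref{le3.4} is in hand, Theorem~\ref{th3.1} is immediate by Fourier inversion:
\[
\|F(A)A\|_{p\to p}\le\int_\R|\widehat F(\xi)|\,\|e^{i\xi A}A\|_{p\to p}\,d\xi\le C\int_\R|\widehat F(\xi)|(1+|\xi|)^{n\sigma_p}\,d\xi\le C\|F\|_{H^s}
\]
by Cauchy--Schwarz as soon as $s>n\sigma_p+\tfrac12$. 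So the relevant ``decomposition'' is of $\widehat F$, not of the spectrum of $A$. Your second part (writing $F(-\log A)=G(A)A$ with $G(\lambda)=F(-\log\lambda)/\lambda$ supported in $[e^{-1},e]$ and invoking the first part) is correct and is exactly what the paper does.
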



The proof of  Theorem~\ref{th3.1} will be given at the end of this section after a series of preparatory results. The following statement is a direct consequence of  Theorem~\ref{th3.1}.

\begin{coro}\label{coro3.2}
	Suppose that $L$ is a non-negative self-adjoint operator on $L^2(X)$ that for a given $\tau>0$ the semigroup operator 
	$e^{-\tau L}$ satisfies condition \eqref{pVEp2} for some  $1\leq p<2$ and  $a>[n/2]+1$.
	If $F$ is a bounded Borel function such that $\supp F\subset [-1, 1]$ and
	$F\in  H^s(\mathbb R)$ for some $s>n(1/p-1/2)+1/2$,
	then  there exists constant $C>0$  such that
	$$
	\|F(\tau L)\|_{p\to p}\leq C\|F\|_{H^s}.
	$$
	As a consequence  if operators $e^{-\tau L}$ satisfies condition  \eqref{pVEp2} with constant independent of $\tau$  then
	$$
	\sup_{\tau>0}\|F(\tau L)\|_{p\to p}\leq C\|F\|_{H^s}
	$$
	for the same range of exponents $p$ and $s$. 
\end{coro}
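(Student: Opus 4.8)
The plan is to deduce the corollary directly from Theorem~\ref{th3.1} applied to the bounded operator $A=e^{-\tau L}$. Since $L$ is non-negative and self-adjoint, $A=e^{-\tau L}$ is bounded, self-adjoint and non-negative, with $0\le A\le I$; by hypothesis it satisfies \eqref{pVEp2} for the given $\tau>0$, $1\le p<2$ and $a>[n/2]+1$, so all the assumptions of Theorem~\ref{th3.1} are in force. Because $\supp F\subset[-1,1]$, the second part of that theorem yields
$$
\|F(-\log A)\|_{p\to p}\le C\|F\|_{H^s}
$$
for every $s>n(1/p-1/2)+1/2$, with a constant $C$ independent of $\tau$.

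The next step is to identify $F(-\log A)$ with $F(\tau L)$. By the spectral mapping theorem the spectral measure $dE_A$ of $A$ is the push-forward of the spectral measure $dE_L$ of $L$ under the map $\lambda\mapsto e^{-\tau\lambda}$. Hence, applying the Borel functional calculus to the function $\mu\mapsto F(-\log\mu)$ on $(0,1]$ --- which is well defined and vanishes for $\mu$ in a neighbourhood of $0$ because $\supp F$ is compact --- we obtain $F(-\log A)=F(\tau L)$. This proves $\|F(\tau L)\|_{p\to p}\le C\|F\|_{H^s}$.

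For the final assertion I would invoke the uniformity part of Theorem~\ref{th3.1}: the constant $C$ produced there depends only on the constant appearing in \eqref{pVEp2} (and on $n$, $p$, $a$, $s$), and not on $\tau$ itself. Thus, if the operators $e^{-\tau L}$ satisfy \eqref{pVEp2} with one and the same constant for all $\tau>0$, the same $C$ works for every $\tau$, and taking the supremum gives $\sup_{\tau>0}\|F(\tau L)\|_{p\to p}\le C\|F\|_{H^s}$.

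The argument is essentially formal once Theorem~\ref{th3.1} is available; the only point requiring any care is the functional-calculus identity $F(-\log e^{-\tau L})=F(\tau L)$ when $L$ is unbounded, where $-\log$ is singular at the bottom of the spectrum of $A$. This causes no difficulty, however, precisely because $F$ is supported in $[-1,1]$ and hence vanishes near $+\infty$.
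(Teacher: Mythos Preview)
Your argument is correct and follows essentially the same route as the paper: apply Theorem~\ref{th3.1} with $A=e^{-\tau L}$, use the second clause to get $\|F(-\log A)\|_{p\to p}\le C\|F\|_{H^s}$, identify $F(-\log A)=F(\tau L)$, and invoke the $\tau$-independence of the constant. Your write-up simply makes explicit the functional-calculus identification and the handling of the possible singularity at $0\in\sigma(A)$, points the paper leaves implicit.
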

\begin{proof}
	We apply  Theorem \ref{th3.1} to the operator $A=e^{-\tau L}$
	for all $\tau>0$. Then the theorem  follows from  the fact that the constants in 
	statement of Theorem \ref{th3.1} are independent of $\tau$. 
\end{proof}

\begin{remark}
	It is possible to obtain a version of Theorem \ref{th3.1} 
	under the weaker assumption  $a>0$. However, this requires a different 
	approach which we do not discuss here.  Related results  can be found  in \cite{Heb2} and \cite{Ma}. We will consider this more general case in a forthcoming paper \cite{COSY3}. 
\end{remark}

\medskip

\subsection{Preparatory  results}
The  following   result  plays  a essential role in the  proof of    Theorem~\ref{th3.1}.
 \begin{theorem}\label{le3.4}
Suppose that $A$   is a bounded self-adjoint operator on $L^2(X)$
and satisfies condition \eqref{pVEp2} for $1\leq p<2$, $a>[n/2]+1$ and for some $\tau>0$.
 Then there exists a constant $C>0$ such that for all $  t\in{\mathbb R},$
 \begin{eqnarray}\label{e3.5}
\|e^{it A}A\|_{p\to p}\leq C(1+|t|)^{n\sigma_p}
\end{eqnarray}
where $\sigma_p=1/p-1/2$.
\end{theorem}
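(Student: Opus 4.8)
The plan is to bound $\|e^{itA}A\|_{p\to p}$ by interpolating between an $L^2\to L^2$ estimate and an $L^p\to L^p$ estimate, and to obtain the latter via the Schur-type test of Lemma~\ref{le2.2} applied to the partition $\{Q_\ell(\tau)\}$. Since $A$ is self-adjoint and bounded with $\|A\|_{2\to2}\le C$ (Proposition~\ref{prop2.1}), the operator $e^{itA}$ is unitary on $L^2$, hence $\|e^{itA}A\|_{2\to2}\le C$ uniformly in $t$. The whole content is therefore the $L^p\to L^p$ bound, where we expect polynomial growth in $|t|$. By Lemma~\ref{le2.2} it suffices to control $\sup_j\sum_i\|P_{Q_i(\tau)}e^{itA}AP_{Q_j(\tau)}\|_{p\to p}$ (and the symmetric sum, which is handled identically by self-adjointness).

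The key estimate is a pointwise-in-$(i,j)$ bound for $\|P_{Q_i(\tau)}e^{itA}AP_{Q_j(\tau)}\|_{p\to p}$. Here I would split into two regimes according to $d(x_i,x_j)$ versus $|t|\tau$. For the "near" regime $d(x_i,x_j)\lesssim (1+|t|)\tau$, one uses the trivial bound $\|P_{Q_i(\tau)}e^{itA}AP_{Q_j(\tau)}\|_{p\to p}\le \|A\|_{p\to p}\le C$ from Proposition~\ref{prop2.1}; counting the indices $i$ with $d(x_i,x_j)\le c(1+|t|)\tau$ via the volume-counting estimate \eqref{qq} gives at most $C(1+|t|)^n$ such terms, contributing $C(1+|t|)^n$ — already essentially of the right size after interpolation. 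For the "far" regime $d(x_i,x_j)\gg (1+|t|)\tau$, one must exploit cancellation in $e^{itA}$: expand $e^{itA}A=\sum_{k\ge1}\frac{(it)^k}{k!}A^k$, or rather work with a truncated Taylor expansion plus remainder, and use that a product of $N$ copies of $A$, with each factor localized to a ball of radius $\tau$, only transfers mass a distance $\sim N\tau$, so its $P_{Q_i(\tau)}(\cdot)P_{Q_j(\tau)}$ component vanishes (or is tiny) when $d(x_i,x_j)>N\tau$. Concretely, iterating \eqref{e2.223} along a chain of intermediate partition cells and summing — exactly as in the proof of Proposition~\ref{prop2.5} but with $k$ factors instead of two — yields $\|P_{Q_i(\tau)}A^kP_{Q_j(\tau)}\|_{p\to p}\le C^k (1+d(x_i,x_j)/\tau)^{-n-a}$ with the chain combinatorics absorbed by the hypothesis $a>[n/2]+1>n$ (the summability in \eqref{e2.e2} gives a constant raised to the $k$-th power). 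Hence for $d(x_i,x_j)=R\tau$ with $R$ large,
\[
\|P_{Q_i(\tau)}e^{itA}AP_{Q_j(\tau)}\|_{p\to p}\le \sum_{k\ge R/2}\frac{|t|^k}{k!}\,C^k (1+R)^{-n-a}\le C(1+R)^{-n-a}\sum_{k\ge R/2}\frac{(C|t|)^k}{k!},
\]
and the tail of the exponential series is $\le e^{-cR}$ once $R\ge C|t|$, which makes the far contribution summable in $i$ with a bound $O(1)$ (indeed $O((1+|t|)^n)$ with room to spare).

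Combining the two regimes gives $\|e^{itA}A\|_{p\to p}\le C(1+|t|)^{n}$, and interpolating this with $\|e^{itA}A\|_{2\to2}\le C$ at the point $1/p=\theta/p+(1-\theta)/2$, i.e. with $\theta=\sigma_p/\sigma_p\cdots$ chosen so that the $L^p$ estimate enters with weight $\theta$ and $\theta\cdot n$ matches $n\sigma_p/\sigma_1 $ — more precisely, Stein interpolation between $L^{p_0}\to L^{p_0}$ (for the endpoint $p_0=1$ or $p_0=p$) with growth $(1+|t|)^{n}$ and $L^2\to L^2$ with growth $(1+|t|)^0$ produces $L^p\to L^p$ with growth $(1+|t|)^{2n\sigma_p}$; to reach the sharp exponent $n\sigma_p$ one should instead prove the near-regime count more carefully, noting that the relevant ball has radius only $\sim(1+|t|)\tau$ so the number of cells is $(1+|t|)^n$, but the operator norm of each piece on $L^p$ can be improved to $(1+|t|)^{-n/2}$-type gain by first establishing the $p\to2$ version $\|V_\tau^{\sigma_p}e^{itA}A\|_{p\to2}\le C(1+|t|)^{n\sigma_p}$ and dualizing/composing, exactly the mechanism by which the Euclidean wave/Schrödinger $L^p$ bounds acquire the exponent $n\sigma_p$ rather than $2n\sigma_p$. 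The main obstacle is precisely this: getting the \emph{sharp} power $n\sigma_p$ rather than a crude $2n\sigma_p$, which forces one to run the argument at the level of $p\to2$ estimates (using \eqref{pVEp2} with its gain $V_\tau^{\sigma_p}$) rather than $p\to p$ estimates, and to track the finite speed of propagation of $A^k$ together with the $L^p$–$L^2$ duality carefully; the combinatorial chain estimates and the exponential-tail bookkeeping are routine by comparison.
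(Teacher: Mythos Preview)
Your proposal has two genuine gaps.

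\textbf{The far-regime Taylor argument gives exponential, not polynomial, growth in $t$.} You assert that $P_{Q_i(\tau)}A^kP_{Q_j(\tau)}$ ``vanishes (or is tiny)'' when $d(x_i,x_j)>k\tau$, and then restrict the Taylor sum to $k\ge R/2$. But $A$ has only polynomial off-diagonal decay, not compact support, so $P_{Q_i}A^kP_{Q_j}$ is \emph{not} zero for small $k$. Your own iteration bound $\|P_{Q_i}A^kP_{Q_j}\|_{p\to p}\le C^k(1+R)^{-n-a}$ is essentially correct, but then
\[
\|P_{Q_i}e^{itA}AP_{Q_j}\|_{p\to p}\le\sum_{k\ge 0}\frac{|t|^k}{k!}\,C^{k+1}(1+R)^{-n-a}=C\,e^{C|t|}(1+R)^{-n-a},
\]
which summed over cells yields $\|e^{itA}A\|_{p\to p}\lesssim e^{C|t|}$. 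There is no way to extract a polynomial bound from the power series alone; the unitarity of $e^{itA}$ on $L^2$ must enter, and it is invisible term-by-term. The near-regime bound $\|P_{Q_i}e^{itA}AP_{Q_j}\|_{p\to p}\le\|A\|_{p\to p}$ is also circular or unjustified: it would require $\|e^{itA}\|_{p\to p}\le 1$, which you do not know.

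\textbf{The exponent is off by a factor of $2$, and you do not actually fix it.} You correctly observe that even a successful $L^p\to L^p$ bound of size $(1+|t|)^n$ interpolated against $L^2$ only yields $(1+|t|)^{2n\sigma_p}$, and then gesture at running the argument at the $p\to2$ level. But this is exactly the nontrivial part. The paper's proof handles both issues at once by a different route: it factors
\[
\|e^{itA}A\|_{p\to p}\le\|V_\tau^{-\sigma_p}\|_{X_\tau^{p,2}\to L^p}\,\|V_\tau^{\sigma_p}e^{itA}V_\tau^{-\sigma_p}\|_{X_\tau^{p,2}\to X_\tau^{p,2}}\,\|V_\tau^{\sigma_p}A\|_{L^p\to X_\tau^{p,2}}
\]
through the amalgam space $X_\tau^{p,2}$, and controls the middle factor via Lemma~\ref{le3.3}, which in turn needs the multi-commutator bound $\|{\rm ad}_m^k(e^{itA})\|_{2\to2}\le C(1+|t|)^k$ for $0\le k\le[n/2]+1$. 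This bound comes from the integral formula $[\eta,e^{itA}]=it\int_0^1 e^{istA}[\eta,A]e^{i(1-s)tA}\,ds$ (Lemma~\ref{le3.2}), which exploits $\|e^{isA}\|_{2\to2}=1$ at every step and reduces matters to $\|{\rm ad}_m^k(A)\|_{2\to2}\le C$; the latter is where $a>[n/2]+1$ is used. The complex interpolation inside Lemma~\ref{le3.3} is what produces the sharp exponent $n\sigma_p$ rather than $2n\sigma_p$. None of this is visible from the Schur-test-plus-Taylor-expansion strategy.
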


\begin{remark}
	It is interesting to compare the above statement with Theorem 1.1 of \cite{DN}. Note that in Theorem~\ref{le3.4} we assume  condition \eqref{pVEp2} for one fixed exponent $\tau$ but conclusion is valid for all $t\in \R$.  
\end{remark}

In order to prove Theorem~\ref{le3.4} we need two technical lemmas. First,
we  state the following known formula for the commutator of a Lipschitz function and an operator
$T$ on $L^2(X)$ on metric measure space $X$.  Recall our notation convention $ \eta T= M_\eta T$.

\begin{lemma}\label{le3.2}
Let $T$ be a self-ajoint operator on $L^2(X)$. Assume that for  some $\eta\in {\rm Lip}(X)$, the
commutator   $[\eta, T]$, defined by $[\eta, T]f=M_{\eta} Tf-TM_{\eta} f$,   satisfies
that for all $f\in {\rm Dom}(T)$, $\eta f\in {\rm Dom}(T)$ and
$$
\|[\eta, T]f\|_2\leq C\|f\|_2,\ \ \ 
$$
where  ${\rm Dom}(T)$ denotes the domain of $T$ .
 Then the following formula holds:
\begin{eqnarray*}
[\eta, e^{itT}]f=it \int_0^1 e^{istT}[\eta, T]  e^{i(1-s)tT}f ds\ \ \ \ \forall t\in{\mathbb R}, \forall f\in L^2(X).
\end{eqnarray*}
\end{lemma}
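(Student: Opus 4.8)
The goal is to establish Lemma~\ref{le3.2}, the commutator formula
\[
[\eta, e^{itT}]f = it \int_0^1 e^{istT}[\eta, T] e^{i(1-s)tT}f\, ds.
\]
My plan is to reduce this to an elementary calculus-in-Banach-space argument applied to the operator-valued function
\[
\Phi(s) := e^{istT} M_\eta e^{i(1-s)tT},\qquad s\in[0,1].
\]
The idea is that $\Phi(1) - \Phi(0) = e^{itT} M_\eta - M_\eta e^{itT} = [e^{itT},\eta] = -[\eta,e^{itT}]$, so if $\Phi$ is differentiable (in the strong operator topology, say applied to a fixed $f\in L^2$) we can write the left-hand side as $-\int_0^1 \Phi'(s)f\, ds$ and then identify $\Phi'(s)$.

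First I would fix $f\in L^2(X)$ and compute $\Phi'(s)f$ formally. Since $T$ is self-adjoint, $\{e^{irT}\}_{r\in\R}$ is a strongly continuous unitary group, and for $g\in\mathrm{Dom}(T)$ we have $\frac{d}{dr}e^{irT}g = iTe^{irT}g = ie^{irT}Tg$ with the derivative taken in $L^2$. Differentiating $\Phi(s)f$ by the product rule (one needs $e^{i(1-s)tT}f$ and $M_\eta e^{i(1-s)tT}f$ to lie in $\mathrm{Dom}(T)$ to differentiate the two factors) gives
\[
\Phi'(s)f = istT\, e^{istT} M_\eta e^{i(1-s)tT}f - it\, e^{istT} M_\eta\, T e^{i(1-s)tT}f
= it\, e^{istT}\big(T M_\eta - M_\eta T\big)e^{i(1-s)tT}f = it\, e^{istT}[T,\eta]e^{i(1-s)tT}f.
\]
Hence $-\int_0^1 \Phi'(s)f\,ds = it\int_0^1 e^{istT}[\eta,T]e^{i(1-s)tT}f\,ds$, which is the claimed identity, once I justify that $\Phi(1)f-\Phi(0)f = \int_0^1 \Phi'(s)f\,ds$ — i.e. that $s\mapsto \Phi(s)f$ is (absolutely) continuous with the derivative computed above.

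The technical heart, and the main obstacle, is the domain bookkeeping needed to make the above rigorous. The hypothesis is precisely tailored to this: we assume $\eta\in\mathrm{Lip}(X)$, that $\eta f\in\mathrm{Dom}(T)$ whenever $f\in\mathrm{Dom}(T)$, and that $[\eta,T]$ extends to a bounded operator on $L^2$ with norm $\le C$. From boundedness of $[\eta,T]$ one gets that $M_\eta$ maps $\mathrm{Dom}(T)$ into $\mathrm{Dom}(T)$ with $\|TM_\eta g\|_2 \le \|M_\eta T g\|_2 + C\|g\|_2 \le (\|\eta\|_\infty\|Tg\|_2 + C\|g\|_2)$, so $M_\eta$ is continuous from $\mathrm{Dom}(T)$ (with graph norm) to itself; this is what licenses differentiating the $M_\eta e^{i(1-s)tT}f$ factor. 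The standard way to finish is: (i) first prove the identity for $f\in\mathrm{Dom}(T)$, where all the differentiations above are legitimate and $s\mapsto\Phi(s)f$ is $C^1$ into $L^2$, so the fundamental theorem of calculus applies; (ii) then observe that both sides of the identity are bounded operators in $f$ (the left side because $e^{itT}$ is unitary and $M_\eta$ is bounded; the right side because the integrand is bounded uniformly in $s$ by $|t|\,C\,\|f\|_2$ and depends continuously on $s$ in the strong topology), so the identity extends from the dense subspace $\mathrm{Dom}(T)$ to all of $L^2(X)$ by a limiting argument. Strong continuity in $s$ of $e^{istT}[\eta,T]e^{i(1-s)tT}f$ — needed so that the Bochner/Riemann integral on the right makes sense — follows from strong continuity of the unitary group together with boundedness of $[\eta,T]$. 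I would present this as a short two-paragraph proof, emphasizing step (i) and then the density/boundedness extension in step (ii), and I would not belabor the routine product-rule computation.
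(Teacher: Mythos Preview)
Your approach is correct and is exactly the standard Duhamel/integration-by-parts argument the paper alludes to (the paper's own proof is simply ``The proof follows by integration by parts; see \cite[Lemma 3.5]{MM}''). One small slip: in step (ii) you invoke $\|\eta\|_\infty$ and claim $M_\eta$ is bounded, but $\eta\in\mathrm{Lip}(X)$ need not be bounded --- indeed in the paper's application $\eta_\ell(x)=d(x,x_\ell)/\tau$ is not. The fix is routine: once the identity holds on the dense subspace $\mathrm{Dom}(T)$, boundedness of the \emph{right-hand} side (which follows from $\|[\eta,T]\|_{2\to2}\le C$ and unitarity of $e^{irT}$) is enough to conclude that $[\eta,e^{itT}]$, initially defined on $\mathrm{Dom}(T)$, extends to a bounded operator on $L^2$ given by the integral formula.
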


\begin{proof}
The proof of Lemma~\ref{le3.2}  follows   by integration by parts. See for example,   \cite[Lemma 3.5]{MM}.
\end{proof}

Next we recall  some useful results concerning amalgam spaces \cite{BDN, DN, JN94}.
For a given  $\tau>0$, we recall that  $ \{Q_\ell(\tau)\}_{\ell=1}^{\infty} $
is  a countable partition of $X$ as in Section \ref{sec2.1}.
For $1\leq p, q\leq \infty$ and a non-negative number $\tau>0$, consider   a two-scale Lebesgue space $X_\tau^{p,q}$
of measurable functions on $X$ equipped with the norm
\begin{eqnarray}\label{e3.1}
\|f\|_{X_\tau^{p,q}}:=\left( \sum_{\ell=1}^{\infty}\|P_{Q_\ell(\tau)}f\|_{q}^p\right)^{1/p}.
\end{eqnarray}
(with obvious changes if $q=\infty$).

 Notice that when $q=p$ these spaces are just the Lebesgue spaces $X_\tau^{p, p}=L^p$ for  every $\tau>0$ and $p.$
Note also that for $1\leq   p_1<p_2\leq \infty,$ we have that $X_\tau^{p_1,q}\subseteq X_\tau^{p_2,q}$ with
\begin{eqnarray*}
 \|f\|_{X_\tau^{p_2,q}}\leq  \|f\|_{X_\tau^{p_1,q}}.
\end{eqnarray*}


The following result  gives a  criterion for a linear operator $A$ to be bounded on   $X_\tau^{p, 2}, 1\leq p\leq 2.$
We define a family of functions  $\{\eta_\ell(x)\}_{\ell}$  by
\begin{eqnarray}\label{e12.3}
\eta_\ell(x): ={d(x,x_{\ell})\over \tau}, \ \ \ \   \ell=1, 2, \cdots 
\end{eqnarray}
 i.e., the distance function between $x_{\ell}$ and $x$ (up to a constant).

\begin{lemma}\label{le3.3} Let $\tau>0$ and $\{\eta_\ell(x)\}_{\ell=1}^{\infty}$ be as above. For a bounded operator $T$ on $L^2(X)$, the multi-commutator
  ${\rm ad}_{\ell}^k(T): L^2(X)\to L^2(X) $
is defined inductively by
\begin{eqnarray}\label{e12.3}
{\rm ad}_{\ell}^0(T)=T, \ \ \   {\rm ad}_{\ell}^k(T)={\rm ad}_{\ell}^{k-1}({\eta_{\ell}}T-T{\eta_{\ell}}), \ \ \ \ \ell\geq 1, \    k\geq 1.
\end{eqnarray}
Suppose  that there exists a constant $M>1$ such that
for all $1\leq \ell<\infty,$
$$
\|{\rm ad}_{\ell}^k(T)\|_{2\to 2}\leq CM^k, \ \ \ \  0\leq k\leq [n/2]+1.
$$
Then for  $1\leq p\leq 2$,
$$
\|V_\tau^{{\sigma_p}}TV_\tau^{ {-\sigma_p}} f\|_{X_\tau^{p,2}}\leq CM^{n\sigma_p}\|f\|_{X_\tau^{p,2}} \  \ \
{\rm with}\ \sigma_p=(1/p-1/2)
$$
for some constant  $C>0$ depending on $n, p$ and $\|T\|_{2\to 2}.$
\end{lemma}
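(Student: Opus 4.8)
The plan is to deduce the estimate by complex interpolation between the trivial endpoint $p=2$ and the substantive endpoint $p=1$. For $p=2$ we have $\sigma_p=0$, so $V_\tau^{\sigma_p}TV_\tau^{-\sigma_p}=T$ and the claim reduces to $\|T\|_{2\to2}\le C$, which is the case $k=0$ of the hypothesis. For $1<p<2$ I would regard $X_\tau^{p,2}$ as the complex interpolation space $[X_\tau^{2,2},X_\tau^{1,2}]_\theta$ with $\theta=2\sigma_p$; this rests on the standard interpolation identity for the vector-valued sequence spaces $X_\tau^{q,2}=\ell^q(\{Q_\ell(\tau)\}_\ell;L^2)$, together with $X_\tau^{2,2}=L^2(X)$. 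I would then apply Stein's interpolation theorem to the analytic family $S_z:=V_\tau^{z/2}\,T\,V_\tau^{-z/2}$, $0\le\Re z\le1$ (admissibility is routine: for $f$ supported on finitely many $Q_\ell(\tau)$ the map $z\mapsto V_\tau^{-z/2}f$ is $L^2$-analytic and bounded on the strip). On $\Re z=0$ the multiplier $V_\tau^{iy/2}$ is pointwise unimodular, hence an isometry of $X_\tau^{2,2}=L^2$, so $\|S_{iy}\|_{L^2\to L^2}=\|T\|_{2\to2}$; on $\Re z=1$, conjugation by $V_\tau^{\pm iy/2}$ commutes with every $P_{Q_\ell(\tau)}$ and preserves the $L^2$-norm of each block, so $\|S_{1+iy}\|_{X_\tau^{1,2}\to X_\tau^{1,2}}=\|V_\tau^{1/2}TV_\tau^{-1/2}\|_{X_\tau^{1,2}\to X_\tau^{1,2}}$ for all $y$. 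Since $S_\theta=V_\tau^{\sigma_p}TV_\tau^{-\sigma_p}$, it remains to establish
\[
\|V_\tau^{1/2}TV_\tau^{-1/2}\|_{X_\tau^{1,2}\to X_\tau^{1,2}}\le C\,M^{n/2},
\]
with $C$ depending only on $n$ and $\|T\|_{2\to2}$; interpolation then gives $\|V_\tau^{\sigma_p}TV_\tau^{-\sigma_p}\|_{X_\tau^{p,2}\to X_\tau^{p,2}}\le\|T\|_{2\to2}^{1-\theta}(CM^{n/2})^\theta\le C'M^{n\sigma_p}$.

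The technical heart of the $p=1$ estimate is a single auxiliary bound: for every $\ell$ and every $0\le k\le K:=[n/2]+1$,
\[
\big\|\eta_\ell^{\,k}\,T\,P_{Q_\ell(\tau)}h\big\|_2\le C_k\,M^k\,\|P_{Q_\ell(\tau)}h\|_2 .
\]
I would prove this by induction on $k$. The case $k=0$ is the $L^2$-boundedness of $T$. For the step I use the iterated-commutator identity ${\rm ad}_\ell^k(T)=\sum_{m=0}^k\binom{k}{m}(-1)^m\eta_\ell^{\,k-m}T\eta_\ell^{\,m}$ to write $\eta_\ell^{\,k}T={\rm ad}_\ell^k(T)-\sum_{m=1}^k\binom{k}{m}(-1)^m\eta_\ell^{\,k-m}T\eta_\ell^{\,m}$ and apply both sides to $P_{Q_\ell(\tau)}h$. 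Since $Q_\ell(\tau)\subset B(x_\ell,\tau)$ one has $0\le\eta_\ell<1$ on $Q_\ell(\tau)$, so $\eta_\ell^{\,m}$ maps $L^2(Q_\ell(\tau))$ into itself with norm $\le1$; consequently each lower-order term equals $\eta_\ell^{\,k-m}T\,P_{Q_\ell(\tau)}(\eta_\ell^{\,m}P_{Q_\ell(\tau)}h)$ and is controlled by the inductive hypothesis at order $k-m<k$ (this also keeps every intermediate function in $L^2$, the unbounded $\eta_\ell$ being applied only where it is bounded), while the leading term is bounded using $\|{\rm ad}_\ell^k(T)\|_{2\to2}\le CM^k$.

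Granting the auxiliary bound, here is how the $p=1$ estimate would go. Fix $j$ and $h$, put $g_j:=V_\tau^{-1/2}P_{Q_j(\tau)}h$ (supported in $Q_j(\tau)$), and note that by \eqref{e1.2} the function $V(\cdot,\tau)$ is comparable to the constant $V(x_\ell,\tau)$ on each ball $B(x_\ell,\tau)$; hence $\|g_j\|_2\lesssim V(x_j,\tau)^{-1/2}\|P_{Q_j(\tau)}h\|_2$ and
\[
\big\|V_\tau^{1/2}TV_\tau^{-1/2}P_{Q_j(\tau)}h\big\|_{X_\tau^{1,2}}\lesssim\sum_i V(x_i,\tau)^{1/2}\,\|P_{Q_i(\tau)}Tg_j\|_2 .
\]
I would split $\sum_i$ into the $i$ with $d(x_i,x_j)<2\tau$ (boundedly many, contributing $\lesssim\|T\|_{2\to2}V(x_j,\tau)^{1/2}\|g_j\|_2$) and the dyadic annuli $A_l=\{i:2^l\tau\le d(x_i,x_j)<2^{l+1}\tau\}$, $l\ge1$, and apply Cauchy–Schwarz over each $A_l$. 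The first factor satisfies $\sum_{i\in A_l}V(x_i,\tau)\lesssim\sum_{i\in A_l}\mu(Q_i(\tau))\lesssim V(x_j,2^{l+2}\tau)\lesssim 2^{ln}V(x_j,\tau)$, using disjointness of the $Q_i(\tau)$, $\mu(Q_i(\tau))\gtrsim V(x_i,\tau)$ and \eqref{e1.2}. For the second factor, $\eta_j\ge 2^{l-1}$ on $Q_i(\tau)$ when $i\in A_l$, so using $\sum_i\|P_{Q_i(\tau)}u\|_2^2=\|u\|_2^2$ and the auxiliary bound with $\ell=j$, $k=K$,
\[
\Big(\sum_{i\in A_l}\|P_{Q_i(\tau)}Tg_j\|_2^2\Big)^{1/2}\le 2^{-(l-1)K}\big\|\eta_j^{\,K}Tg_j\big\|_2\lesssim M^K2^{-lK}\|g_j\|_2,
\]
and trivially the left side is also $\le\|Tg_j\|_2\le\|T\|_{2\to2}\|g_j\|_2$. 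Combining, the $A_l$-sum is $\lesssim 2^{ln/2}V(x_j,\tau)^{1/2}\min(\|T\|_{2\to2},M^K2^{-lK})\|g_j\|_2$; since $n/2-K\le-\tfrac12<0$, summing in $l$ and splitting the minimum at the crossover $2^l\sim M$ gives $\sum_l2^{ln/2}\min(\|T\|_{2\to2},M^K2^{-lK})\lesssim M^{n/2}$. Hence $\|V_\tau^{1/2}TV_\tau^{-1/2}P_{Q_j(\tau)}h\|_{X_\tau^{1,2}}\lesssim M^{n/2}\|P_{Q_j(\tau)}h\|_2$, and summing over $j$ by the triangle inequality in $X_\tau^{1,2}$ completes the argument.

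The step I expect to be the main obstacle — and the reason one cannot simply Schur-test the blocks $\|P_{Q_i(\tau)}V_\tau^{\sigma_p}TV_\tau^{-\sigma_p}P_{Q_j(\tau)}\|_{2\to2}$ directly — is precisely the reduction to $p=1$ together with the annular Cauchy–Schwarz. The commutator hypothesis yields only $\|P_{Q_i(\tau)}TP_{Q_j(\tau)}\|_{2\to2}\lesssim M^K(1+d(x_i,x_j)/\tau)^{-K}$ with $K=[n/2]+1$, which is neither summable against the unavoidable volume ratio $(1+d(x_i,x_j)/\tau)^{n\sigma_p}$ appearing in $\|P_{Q_i(\tau)}V_\tau^{\sigma_p}TV_\tau^{-\sigma_p}P_{Q_j(\tau)}\|$, nor of the correct size in $M$ (it would give $M^K$ rather than the sharp $M^{n\sigma_p}$). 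Passing to the $\ell^1$ endpoint and replacing the $\ell^1$ sum over each annulus by an $\ell^2$ sum — the native setting of the $L^2$-valued commutator estimates — makes the cardinality factor $2^{ln/2}$ produced by $\mu(A_l)\sim 2^{ln}V(x_j,\tau)$ beatable by the decay $M^K2^{-lK}$ with $K>n/2$, and the resulting geometric series peaks at $2^l\sim M$, producing exactly $M^{n/2}$. Pinning down the auxiliary bound and verifying this last arithmetic cleanly is the crux.
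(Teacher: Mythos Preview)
Your proposal is correct and follows essentially the same approach as the paper: complex interpolation on the analytic family $V_\tau^{z/2}TV_\tau^{-z/2}$ between the trivial $p=2$ endpoint and the $p=1$ endpoint, the latter handled by Cauchy--Schwarz (converting the $\ell^1$ sum over blocks to $\ell^2$, at the price of a volume factor) combined with the weighted commutator estimate $\|\eta_\ell^{\kappa}T\,g\|_2\lesssim M^{\kappa}\|g\|_2$ for $g$ supported in $Q_\ell(\tau)$, $\kappa=[n/2]+1$. The only differences are organizational: the paper uses a single near/far threshold $N$ and optimizes $N\sim M\|T\|_{2\to2}^{-1/\kappa}$, whereas you use dyadic annuli and sum $2^{ln/2}\min(\|T\|_{2\to2},M^\kappa 2^{-l\kappa})$; and the paper derives the weighted bound from the identity $\eta_\ell^\kappa T=\sum_{m}\Gamma(\kappa,m)\,\mathrm{ad}_\ell^m(T)\,\eta_\ell^{\kappa-m}$ (giving $\|\eta_\ell^\kappa T(1+\eta_\ell)^{-\kappa}\|_{2\to2}\le CM^\kappa$), whereas you obtain the equivalent localized version by induction from the binomial expansion of $\mathrm{ad}_\ell^k(T)$.
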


\begin{proof}
We  prove this lemma  by using  the complex interpolation method.
Let  ${\bf S}$ be  the closed strip $0\leq {\rm Re} z\leq 1$
 in the complex plane. For every $z\in {\bf S}$, we consider the analytic family
of  operators:
$$
T_z :=  V_r^{z/ 2}\, T \, V_\tau^{-{z/ 2}}.
$$

Consider $z=1+iy,\,y\in \mathbb{R}$ and $p=1$. Let  ${N}\geq 1$ be a constant large enough  to be chosen later.
Let $\{Q_\ell(\tau)\}_\ell$ be  a countable partition of $X$ in Section  2.1. By definition of $X_\tau^{1,2},$
\begin{eqnarray}\label{e3.2}
\|T_{1+iy} f\|_{X_\tau^{1,2}}&=&\sum_{j=1}^{\infty}\|P_{Q_j(\tau)}V_\tau^{\frac{1+iy}{2}}T V_\tau^{-\frac{1+iy}{2}}f\|_2\nonumber\\
&\leq & \sum_{\ell=1}^{\infty}\sum_{j=1}^{\infty}
\|P_{Q_j(\tau)}V_\tau^{\frac{1+iy}{2}}T V_\tau^{-\frac{1+iy}{2}}P_{Q_\ell(\tau)}f\|_2\nonumber\\
&=& \left(\sum_{\ell } \sum_{j: \, d(x_{\ell},x_j)>{N}\tau}
       + \sum_{\ell } \sum_{j: \,  d(x_{\ell},x_j)\leq N \tau} \right)
      \|P_{Q_j(\tau)}V_\tau^{\frac{1+iy}{2}}TV_\tau^{-\frac{1+iy}{2}}P_{Q_\ell(\tau)}f\|_2\nonumber\\
	   &=:&I+II.
	    \end{eqnarray}
By  the Cauchy-Schwarz  inequality, we  obtain 	
\begin{eqnarray}\label{e3.3}
II&\leq& C\sum_{\ell=1}^{\infty}
\left(\sum_{j: \, d(x_{\ell},x_j)\leq {N}r}V_\tau(x_j)\right)^{1/2}
\left(\sum_{j:\, d(x_{\ell},x_j)\leq {N}\tau}\|P_{Q_j(\tau)}TV_\tau^{-\frac{1+iy}{2}}P_{Q_\ell(\tau)}f\|^2_2\right)^{1/2} \nonumber \\
 &\leq& C  \sum_{\ell } V(x_{\ell}, {N}\tau)^{1/2}  \| TV_\tau^{-\frac{1+iy}{2}}P_{Q_\ell(\tau)}f\|_2  \nonumber \\
 &\leq& C{N}^{n/2}\|T\|_{2\to 2} \sum_{\ell }\mu(Q_\ell(\tau))^{1/2}
   \| V_\tau^{-\frac{1+iy}{2}} P_{Q_\ell(\tau)}f\|_2   \nonumber\\
  &\leq& C{N}^{n/2}\|T\|_{2\to 2}\|  f\|_{X_\tau^{1,2}}.
\end{eqnarray}
Now we estimate the term $I.$ Let $\kappa=[n/2]+1$. We  apply the Cauchy-Schwarz  inequality again to obtain
\begin{eqnarray}\label{e3.4}
I&\leq& C \sum_{\ell=1}^{\infty}  \left(\sum_{j: \, d(x_{\ell},x_j)>{N}\tau}\mu(Q_j(\tau))\eta_{\ell}(x_j)^{-2{\kappa}}\right)^{1/2}
\left(  \sum_{j:d(x_{\ell},x_j)>{N}\tau}\eta_{\ell}(x_j)^{2\kappa} \|P_{Q_j(\tau)}T V_\tau^{-\frac{1+iy}{2}}P_{Q_{\ell}(\tau)}f\|^2_2\right)^{1/2}\nonumber\\
 &\leq& C\sum_{\ell }\left(\sum_{k=0}^{\infty}
 \sum_{j:\, 2^{k}{N}\tau <  d(x_{\ell},x_j)\leq 2^{k +1}{N}\tau} \mu(Q_j(\tau))\eta_{\ell}(x_j)^{-2\kappa}\right)^{1/2}
 \|\eta_{\ell}^{{\kappa}}TV_\tau^{-\frac{1+iy}{2}}P_{Q_{\ell}(\tau)}f\|_2\nonumber\\
 &\leq& C\sum_{\ell }\left(\sum_{k=0}^{\infty} (2^{k} {N})^n \mu(Q_{\ell}(\tau)) (2^{k} {N})^{-2\kappa} \right)^{1/2}
 \|\eta_{\ell}^{{\kappa}}TV_\tau^{-\frac{1+iy}{2}}P_{Q_{\ell}(\tau)}f\|_2\nonumber\\
 &\leq& C\sum_{\ell}{N}^{-\kappa+n/2} \mu(Q_{\ell}(\tau))^{1/2}
 \|\eta_{\ell}^{\kappa}TV_\tau^{-\frac{1+iy}{2}}P_{Q_{\ell}(\tau)}f\|_2.
\end{eqnarray}
To continue we define  we let  $\Gamma(\kappa,0)=1$ for $\kappa\geq 1$, and $\Gamma(\kappa, m)$ defined  inductively by
 $\Gamma(\kappa, m+1)=\sum_{\ell=m}^{\kappa-1}\Gamma(\ell, m)$ for $1\le m \le \kappa-1$.
Applying  the following   known formula for commutators (see Lemma 3.1, \cite{JN95}):
 $$
\eta_{\ell}^{\kappa} T =\sum_{m=0}^{\kappa} \Gamma(\kappa, m) {\rm ad}^m_\ell(T) \eta_{\ell}^{\kappa-m},
$$
we obtain
\begin{eqnarray}\label{comutator}
\|\eta_{\ell}^{\kappa}T(1+\eta_{\ell})^{-\kappa} \|_{2\to 2}
 &\leq& C\sum_{m=0}^{\kappa} \|{\rm ad}^m_{\ell}(T) \eta_{\ell}^{\kappa-m}(1+\eta_{\ell})^{-\kappa} \|_{2\to 2}\nonumber
 \\&\leq& C\sum_{m=0}^{\kappa} \|{\rm ad}^m_{\ell}(T) \|_{2\to 2}\leq CM^{\kappa}. 
\end{eqnarray}
This implies 
\begin{eqnarray*}
I&\leq&  C{N}^{-\kappa+n/2} M^{\kappa} \sum_{\ell} \mu(Q_{\ell}(\tau))^{1/2}
 \|V_\tau^{-\frac{1+iy}{2}}(1+\eta_{\ell})^{\kappa}P_{Q_{\ell}(\tau)}f\|_2\\
 &\leq&  C{N}^{-\kappa+n/2} M^{\kappa} \sum_{\ell }
 \|P_{Q_{\ell}(\tau)}f\|_2\leq C{N}^{-\kappa+n/2} M^{\kappa} \|f\|_{X_\tau^{1,2}}.
\end{eqnarray*}
Next, set ${N}=M\|T\|_{2\to 2}^{-1/\kappa}$. Then  above estimates of $I$ and $II$ give 
\begin{eqnarray}\label{i1}
\|T_{1+iy} f\|_{X_\tau^{1,2}}&\leq& C \left({N}^{-\kappa+n/2} M^{\kappa}
+ {N}^{n/2}\|T\|_{2\to 2}\right)
\|f\|_{X_\tau^{1,2}}\nonumber\\
 &\leq& C M^{n/2}\|T\|_{2\to 2}^{1-1/\kappa} \|f\|_{X_\tau^{1,2}}. 
\end{eqnarray}
On the other hand, we have that for $z=iy,\,y\in\mathbb{R}$,
\begin{eqnarray}\label{i2}
\|T_{iy}\|_{X_\tau^{2,2}\to X_\tau^{2,2}}= \|T_{iy}\|_{2\to 2}\leq \|T\|_{2\to 2}
\end{eqnarray}
From  estimates \eqref{i1} and \eqref{i2}, we apply the complex interpolation method  to   obtain  
\begin{eqnarray*}
\|V_\tau^{{\sigma_p}}TV_\tau^{ {-\sigma_p}} f\|_{X_\tau^{p,2}}=\|T_{\frac{2}{p}-1} f\|_{X_\tau^{p,2}}
  \leq C  M^{n\sigma_p}\|f\|_{X_\tau^{p,2}}, \ \ \ \ \sigma_p=(1/p-1/2) 
\end{eqnarray*}
   for some constant  $C>0$ depending on $n, p$ and $\|T\|_{2\to 2}.$
This finishes the proof  of Lemma~\ref{le3.3}.
\end{proof}

 \medskip
 
Now we apply Lemmas~\ref{le3.2} and ~\ref{le3.3}  to prove    Theorem~\ref{le3.4}.

\begin{proof}[Proof f Theorem~\ref{le3.4}]    The  proof    is
inspired  by  Theorem 1.3 of \cite{JN95} and Theorem 1.1 of  \cite{DN}.
Note that
$$
\|e^{itA}A\|_{p\to p}\leq
\|V_{\tau}^{{-\sigma_p}}\|_{X_{\tau}^{p,2}\to L^p}\|V_{\tau}^{{\sigma_p}}e^{itA}
V_{\tau}^{{-\sigma_p}}\|_{X_{\tau}^{p,2}\to X_{\tau}^{p,2}}
\|V_{\tau}^{{\sigma_p}}A\|_{L^p \to X_{\tau}^{p,2}}.
$$

First, we have that  $\|V_{\tau}^{{-\sigma_p}}\|_{X_{\tau}^{p,2}\to L^p}\leq C$ by definition of $X_{\tau}^{p,2}$
and H\"older's  inequality.
To estimate the term $\|V_{\tau}^{{\sigma_p}}A\|_{L^p \to X_{\tau}^{p,2}}$, we recall that  $(\{Q_i(\tau)\}_{i=1}^{\infty})$
is  a countable partition of $X$ as in Section 2.1 and note that
\begin{eqnarray*}
\|V_{\tau}^{{\sigma_p}}Af\|_{X_{\tau}^{p,2}}&=&
\left(\sum_{j=1}^{\infty}\|P_{Q_j(\tau)}V_{\tau}^{{\sigma_p}}Af\|_2^p\right)^{1/p}\\
&\leq& \left(\sum_j\left[\sum_\ell\|P_{Q_j(\tau)}V_{\tau}^{{\sigma_p}}AP_{Q_\ell(\tau)}\|_{p\to2}
\|P_{Q_\ell(\tau)}f\|_p\right]^p\right)^{1/p}.
\end{eqnarray*}
For every $j, \ell$, we set $a_{j\ell}=\|P_{Q_j(\tau)}V_{\tau}^{{\sigma_p}}AP_{Q_{\ell}(\tau)}\|_{p\to 2}$
and $b_\ell=\|P_{Q_{\ell}(\tau)}f\|_p$. It follows by interpolation that 
\begin{eqnarray*}
\|V_{\tau}^{{\sigma_p}}Af\|_{X_{\tau}^{p,2}}&\leq& \big\|\sum_\ell a_{j\ell}b_\ell\big\|_{\ell^p}\\
&\leq& \big\|(a_{j\ell})\big\|_{\ell^p\to \ell^p}\|b_\ell\|_{\ell^p}\\
&\leq&  \big\|(a_{j\ell})\big\|_{\ell^1\to \ell^1}^\theta \big\|(a_{j\ell})
\big\|_{\ell^\infty\to \ell^\infty}^{1-\theta} \|f\|_{p}.
\end{eqnarray*}
 Therefore, by condition \eqref{pVEp2}  we have
$$
 \|(a_{j\ell})\|_{\ell^1\to \ell^1}=
 \sup_\ell\sum_{j}\big\|P_{Q_j(\tau)}V_{\tau}^{{\sigma_p}}AP_{Q_{\ell}(\tau)}\big\|_{p\to 2}\leq C
$$
and
$$
 \|(a_{j\ell})\|_{\ell^\infty\to \ell^\infty}=\sup_j\sum_{\ell}
 \big\|P_{Q_j(\tau)}V_{\tau}^{{\sigma_p}}AP_{Q_{\ell}(\tau)}\big\|_{p\to 2}\leq C.
$$
Thus
 \begin{equation}\label{eq001}
 \big\|V_{\tau}^{{\sigma_p}}A\big\|_{L^p \to X_{\tau}^{p,2}}\leq C.
 \end{equation}
Next we  show that
\begin{eqnarray}\label{e3.6}
\big\|V_{\tau}^{{\sigma_p}}e^{itA}V_{\tau}^{{-\sigma_p}}\big\|_{X_{\tau}^{p,2}\to X_{\tau}^{p,2}}\leq C(1+|t|)^{n\sigma_p}.
\end{eqnarray}
 By Lemma~\ref{le3.3}, it suffices to show for every $m\in{\mathbb N},$
\begin{eqnarray}\label{e3.61}
\|{\rm ad}_m^k(e^{itA})\|_{2\to 2}\leq C(1+|t|)^k, \,\,\, 0\leq k\leq [n/2]+1.
\end{eqnarray}
Note that $A$ is a bounded operator on $L^2(X)$. Recall that (cf. Lemma~\ref{le3.2})
 for all $m\in{\mathbb N}$:
\begin{eqnarray*}
{\rm ad}_m^1(e^{itA})f= it\int_0^1 e^{istA} {\rm ad}_m^1(A) e^{i(1-s)tA}fds, \ \ \ \ \forall t\in{\mathbb R},
\ \ f\in L^2(X).
\end{eqnarray*}
Repeatedly, we are reduced to prove that for every $m\in{\mathbb N},$ there exists a constant $C>0$
independent of $m$ such that,
\begin{eqnarray}\label{e3.62}
\|{\rm ad}_m^k(A)\|_{2\to 2}\leq C, \,\,\, 0\leq k\leq [n/2]+1.
\end{eqnarray}
Fix $m\in {\mathbb N}$.
By Lemma~\ref{le2.2}, it suffices to show
\begin{eqnarray}\label{e3.7}
\sup_j\sum_\ell\|P_{Q_{\ell}(\tau)}{\rm ad}_{m}^k(A)P_{Q_j(\tau)}\|_{2\to 2}\leq C, \,\,\, 0\leq k\leq [n/2]+1
\end{eqnarray}
for some constant $C>0$ independent of $m.$

To show  \eqref{e3.7} we note that
\begin{eqnarray*}
P_{Q_{\ell}(\tau)}{\rm ad}_{m}^k(A)P_{Q_j(\tau)}&=& \sum_{\alpha+\beta+\gamma=k}\frac{k!}{\alpha !\,  \beta ! \, \gamma !}
[\eta_{m}(x_\ell)-\eta_{m}(x_j)]^\beta
[\eta_{m}(\cdot)-\eta_{m}(x_\ell)]^\alpha \times \\
&& \hspace{2cm}\times  P_{Q_{\ell}(\tau)}AP_{Q_j(\tau)}
[\eta_{m}(x_j)-\eta_{m}(\cdot)]^\gamma.
\end{eqnarray*}
Observe  that:

\smallskip

$\bullet$  $ |\eta_{m}(x_\ell)-\eta_{m}(x_j)|\leq d(x_\ell,x_j)/\tau
 $;

\smallskip

$\bullet$ 
  $|\eta_{m}(x)-\eta_{m}(x_\ell)|\chi_{Q_{\ell}(\tau)}\leq d(x,x_\ell)\chi_{Q_{\ell}(\tau)}/\tau\leq 1$; 
  
\smallskip

$\bullet$ 
$|\eta_{m}(x_j)-\eta_{m}(y)|\chi_{Q_j(\tau)}\leq d(y,x_j)\chi_{Q_j(\tau)}/\tau\leq 1$.

\noindent
These, together with estimate \eqref{pVEp2} and $a>[n/2]+1\geq k$, yield 
$$
\sum_\ell\|P_{Q_{\ell}(\tau)}{\rm ad}_{m}^k(A)P_{Q_j(\tau)}\|_{2\to 2}\leq
C \sum_\ell \left(1+ {d(x_\ell,x_j)\over  \tau}  \right)^k \|P_{Q_{\ell}(\tau)}AP_{Q_j(\tau)}\|_{2\to 2}
\leq C
$$
for some constant $C>0$ independent of $j$. Hence,   \eqref{e3.7}
is proved. This, in combination with estimates \eqref{e3.61} and \eqref{e3.62},
 implies  \eqref{e3.6}. All together, we obtain that $\|e^{itA}A\|_{p\to p}\leq C(1+|t|)^{n\sigma_p}
$
where $\sigma_p=(1/p-1/2)$. The proof of Theorem~\ref{le3.4} is complete.
\end{proof}

\medskip

\subsection{Proof of Theorem~\ref{th3.1}}
We apply Lemma~\ref{le3.4} to see that
\begin{eqnarray*}
\|F(A)A\|_{p\to p}&\leq& \int_{\mathbb{R}} |\widehat{F}(\xi)| \|e^{i\xi A}A\|_{p\to p}\,d\xi\\
&\leq& C\int_{\mathbb{R}} |\widehat{F}(\xi)| (1+|\xi|)^{n\sigma_p}\,d\xi\\
&\leq&C \|F\|_{H^s} \left(\int_{\mathbb{R}} (1+|\xi|)^{2(n\sigma_p-s)}\,d\xi\right)^{1/2}\\
&\leq& C\|F\|_{H^s}.
\end{eqnarray*}

If we also assume that $A\geq 0$ and $\supp F\subset [-1, 1]$, then
we may consider  $G(\lambda) :=F(-\log \lambda)\lambda^{-1}$ so that  $F(-\log A)=G(A)A$.  Therefore, 
\begin{eqnarray*}
\|F(-\log A)\|_{p\to p}&\leq& \|G(A)A\|_{p\to p}\leq C\|G\|_{H^s}.
\end{eqnarray*}
Since  $\supp F\subset [-1,1]$ we have   $ \|G\|_{H^s}\leq C\|F\|_{H^s}$ and we obtain 
$\|F(-\log A)\|_{p\to p} \leq C\|F\|_{H^s}.
$
The proof of  Theorem~\ref{th3.1} is complete.
\hfill{}$\Box$

\medskip

\section{Sharp spectral multiplier  results   via  restriction type estimates}
\setcounter{equation}{0}

 The aim of this section is to  obtain sharp $L^p$ boundedness of  spectral multipliers from  restriction type estimates. We consider the metric measure space  $(X, d, \mu)$ with satisfies the doubling condition \eqref{e1.2}
 with the  homogeneous dimension $n$. 
 Let $q\in [2, \infty]$. Recall that $V_r = V(x,r) = \mu(B(x,r))$.
We say that  $A$ satisfies the
    {\it  restriction type condition}   if there exist interval $[b,e]$ for some $-\infty<b<e<\infty$ and $\tau>0$ such that
  for  all Borel functions $F$ with $\supp F\subset [b,e]$,
\begin{equation*}\label{st}
\big\|F(A)AV^{\sigma_p}_{ \tau} \big\|_{p\to 2} \leq C\|F\|_{q}, \ \ \  \ \ \ \ \ \ \ \   \sigma_p= {1\over p}-{1\over 2}.
\tag{$ {{\rm  ST^q_{p, 2}}(\tau) }$}
\end{equation*}
The above conditions originates and in fact is a version of the classical 
Stein-Tomas restriction estimates. For more detailed discussion and  the rationale of formulation of the above condition we referee 
readers to  \cite{COSY, SYY}) for a related definition).

The following statement is our  main result in this section.

 \begin{theorem}\label{th4.5} Let $1\leq p <2\leq q\leq  \infty$. Let  $A$ be
 a bounded self-adjoint operator on $L^2(X)$ satisfying  the property   $\eqref{pVEp2}$  for some 
  $a>[n/2]+1$ and $\tau>0$. 
  Suppose also that $A$ satisfies the property  $\eqref{st}$  on  some interval $[R_1,R_2]$
  for  $-\infty<R_1< R_2<\infty$.  
  Let  $F$ be  a bounded Borel  function such that  $\supp F\subset [R_1+\gamma,  R_2-\gamma]$  for some $\gamma > 0$ and $F \in  W^{s,q}({\mathbb R})$
for some
$$ s>
n \left({1\over p}-{1\over 2}\right) +\frac{n}{4[a]}.
$$
Then  $AF( A)$ is bounded on $L^p(X)$ 
and
\begin{equation}
\label{e4.6}
 \|AF(  {A})\|_{p\to p} \leq
C_p\|F\|_{W^{s,q}}.
\end{equation}
\end{theorem}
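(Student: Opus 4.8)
The strategy follows the now-standard route for deriving sharp spectral multiplier bounds from a restriction-type estimate, but adapted to the bounded operator $A$ (rather than a semigroup generator) and to the polynomial off-diagonal framework of \eqref{pVEp2}. First I would dyadically decompose $F$. Fix a nontrivial $\phi \in C_c^\infty(0,\infty)$ with $\sum_{\ell \in \Z} \phi(2^\ell \lambda) = 1$ for $\lambda > 0$, together with a cutoff at the relevant spectral scale; since $A$ is bounded and $\supp F \subset [R_1+\gamma, R_2-\gamma]$ is compact and bounded away from the endpoints of the restriction interval, only finitely many ``low-frequency'' pieces survive, and one is reduced to estimating $\|A F_\ell(A)\|_{p\to p}$ where $F_\ell$ is $F$ multiplied by a cutoff selecting values of $A$ in a dyadic annulus near a point, or—more precisely in this setting—$F_\ell(\lambda) = F(\lambda)\psi(2^\ell(\lambda - \lambda_0))$ for a suitable fixed $\lambda_0$ and a bump $\psi$. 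The point of this decomposition is that each $F_\ell$ is (after rescaling) a nice function, so that a single application of the restriction estimate \eqref{st} becomes efficient.

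Second, for each dyadic piece I would split the operator $A F_\ell(A)$ into a ``local'' part and a ``global'' (far-off-diagonal) part using the partition $\{Q_i(\tau)\}$ and the finite-speed-type information that is extracted from Theorem~\ref{le3.4}. For the local part, one inserts the factor $V_\tau^{\sigma_p}$ and $V_\tau^{-\sigma_p}$ appropriately and applies the restriction estimate \eqref{st} to bound $\|F_\ell(A) A V_\tau^{\sigma_p}\|_{p\to 2} \le C \|F_\ell\|_q$, then uses a Schur/$\ell^p$ argument in the spirit of Lemma~\ref{le2.2} together with the amalgam-space boundedness from Lemma~\ref{le3.3} to pass back from $X_\tau^{p,2}$ to $L^p$; this produces a bound of the form $C\, 2^{\ell n (1/p-1/2)} \|F_\ell\|_q$ (the gain $2^{-\ell n/p'}$ from the volume of a ball of radius $\sim 2^{-\ell}$, against the loss from Hölder and the $L^p \to L^2$ passage). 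For the global part, the kernel decays like a power of $d(x,y)/\tau$ by \eqref{pVEp2} (and Proposition~\ref{prop2.5}), but one needs a \emph{quantitative} off-diagonal bound for $A F_\ell(A)$ itself: this is where the commutator machinery of Lemma~\ref{le3.2} and Lemma~\ref{le3.3} is used, writing $F_\ell(A)$ via its Fourier transform and controlling $\mathrm{ad}_\ell^k$ applied to $e^{itA}A$ as in Theorem~\ref{le3.4}, so that the off-diagonal tails are summable provided $a > [n/2]+1$ and the Sobolev order $s$ exceeds a threshold involving $n/(4[a])$.

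Third, I would sum over the dyadic pieces. Using $\sum_\ell 2^{\ell n (1/p-1/2)} \|F_\ell\|_q \le C \|F\|_{W^{s,q}}$ whenever $s > n(1/p-1/2) + (\text{slack})$ — a standard Sobolev-embedding-type estimate for dyadic decompositions in $W^{s,q}$, where the extra slack is precisely the $n/(4[a])$ term coming from balancing the local gain against the global off-diagonal loss — one obtains $\|A F(A)\|_{p\to p} \le C \|F\|_{W^{s,q}}$, which is \eqref{e4.6}. The constant is independent of $\tau$ exactly because all the intermediate estimates (Proposition~\ref{prop2.1}, Lemma~\ref{le3.3}, Theorem~\ref{le3.4}, and the hypothesis \eqref{st}) have $\tau$-independent constants.

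\textbf{Main obstacle.} The delicate point is the global/off-diagonal estimate for $A F_\ell(A)$ at the \emph{right} dyadic scale: one must track how the commutator bounds of Theorem~\ref{le3.4}, which give polynomial growth $(1+|t|)^{n\sigma_p}$ in the Fourier variable, interact with the support and regularity of $\widehat{F_\ell}$ after rescaling, and show that the resulting series in $\ell$ (and in the spatial dyadic parameter $k$ measuring $d(x_i,x_j)/\tau$) converges with the claimed threshold $s > n(1/p-1/2) + n/(4[a])$. In particular the appearance of $[a]$ rather than $a$, and the factor $1/4$, reflect an optimization (an $L^2$ Plancherel step against an $L^\infty$ off-diagonal step, or an interpolation between them) that has to be carried out carefully; getting the exponent sharp here, rather than merely finite, is the crux of the argument. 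Everything else — the dyadic decomposition, the Schur tests, and the passage through amalgam spaces — is routine given the preparatory results already established in Sections \ref{sec2} and the preceding part of this section.
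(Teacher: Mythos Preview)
Your overall architecture --- local/global split, restriction estimate for the local part, commutator/amalgam machinery for the global part, and a final optimization producing the $n/(4[a])$ correction --- matches the paper. However, the \emph{initial decomposition} you propose is the wrong one for this setting, and this is a genuine gap, not just imprecision.

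You decompose $F$ in the \emph{spectral} variable, writing $F_\ell(\lambda)=F(\lambda)\psi(2^\ell(\lambda-\lambda_0))$, and you explain the factor $2^{\ell n\sigma_p}$ as coming from ``the volume of a ball of radius $\sim 2^{-\ell}$''. That picture belongs to the setting where the restriction estimate carries a scaling parameter (as for $F(\sqrt L)$ with varying dilation), so that one can match the dyadic spectral scale to a spatial scale. Here $A$ is a fixed bounded operator, $\tau$ is fixed, and the condition \eqref{st} has no rescaling built in. Consequently the factor $2^{\ell n\sigma_p}$ cannot arise from volumes at scale $2^{-\ell}$; it must come from a \emph{free parameter} $N$ (the radius at which one cuts the on/off-diagonal parts, $[AF^{(\ell)}(A)]_{<N\tau}$ versus $[AF^{(\ell)}(A)]_{>N\tau}$), which is later chosen as $N\sim 2^{\ell(1+p/(2[a](2-p)))}$ to balance the two pieces. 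You never introduce this parameter, and without it the optimization that produces the threshold $n\sigma_p+n/(4[a])$ cannot be carried out.

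Relatedly, the summation step you invoke, $\sum_\ell 2^{\ell n\sigma_p}\|F_\ell\|_q\le C\|F\|_{W^{s,q}}$, is standard only when the $F_\ell$ are the \emph{Fourier-side} Littlewood--Paley pieces of $F$ (so that $\|F^{(\ell)}\|_q\lesssim 2^{-\ell s}\|F\|_{W^{s,q}}$). For your spectral-side pieces $\|F_\ell\|_q$ is just a local $L^q$ norm of $F$ near $\lambda_0$, roughly of size $2^{-\ell/q}|F(\lambda_0)|$, and the sum diverges. The paper instead sets
\[
F^{(\ell)}(\lambda)=\frac{1}{2\pi}\int \phi(2^{-\ell}t)\,\widehat F(t)\,e^{it\lambda}\,dt,
\]
which localizes $\widehat F$ to $|t|\sim 2^\ell$; then the off-diagonal part is handled by writing $[Ae^{itA}]_{>N\tau}$ directly (with $|t|\sim 2^\ell$ from the support of $\phi(2^{-\ell}\cdot)$), splitting $Ae^{itA}=e^{itA}[A]_{>N\tau/2}+[e^{itA}]_{>N\tau/2}[A]_{<N\tau/2}+\text{(diagonal)}$, and estimating each summand on $X_\tau^{p,2}$ via the commutator bounds, yielding $\|[Ae^{itA}]_{>N\tau}\|_{p\to p}\lesssim N^{(-[a]+n/2)(2/p-1)}(1+|t|)^{[a](2/p-1)}$. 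One further point you do not mention: for the on-diagonal part the paper needs a second cutoff $\psi$ with $\psi\equiv 1$ on $[R_1+\gamma/2,R_2-\gamma/2]$ to split $\psi(A)AF^{(\ell)}(A)$ (handled by \eqref{st}) from $(1-\psi(A))AF^{(\ell)}(A)$ (handled by Theorem~\ref{th3.1}, using that $(1-\psi)F^{(\ell)}$ has arbitrarily small $H^s$ norm since $\supp F$ stays away from $\supp(1-\psi)$). Without this, $F^{(\ell)}$ is \emph{not} supported in $[R_1,R_2]$ and \eqref{st} cannot be applied directly.
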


\begin{remark}
	One can formulate a version of Corollary \ref{coro3.2} corresponding to
	Theorem \ref{th4.5}. See also the statement of Theorem \ref{coro6.2}.  
\end{remark}

\begin{remark}\label{rem01}
Note that if $A$ satisfies $\eqref{pVEp2}$  for some 
  $a>[n/2]+1$ and $\tau>0$ then $A$ satisfies $\eqref{st}$ with $q = \infty$. Indeed,
by Proposition \ref{prop2.1}, we have
\begin{eqnarray*}
\big\|F(A)AV^{\sigma_p}_{ \tau} \big\|_{p\to 2} &\leq& \|F (A)\|_{2\to 2} \| AV^{\sigma_p}_{ \tau} \big\|_{p\to 2}\\
&\le& C \|F\|_\infty. 
\end{eqnarray*}
As a consequence of Theorem \ref{th4.5} we obtain under the sole assumption  $\eqref{pVEp2}$
\begin{equation}
\label{ee4.6}
 \|AF(  {A})\|_{p\to p} \leq
C_p\|F\|_{W^{s,\infty}}
\end{equation}
for every $F \in W^{s,\infty}$ and some $s>
n \left({1\over p}-{1\over 2}\right) +\frac{n}{4[a]}$. 
\end{remark}

\medskip
Before we start the proof of  Theorem~\ref{th4.5},  we need some preliminary result. 
For a given  $r>0$, we recall that  $(Q_i(r))_i$ 
is a countable partition of $X.$
For a  bounded operator $T$ and a given $r>0$,  we decompose the operator $T$
 into the on-diagonal part   $[T]_{<r}$  and the off-diagonal part  $[T]_{>r}$  as follows:
\begin{eqnarray}\label{e4.m1}
[T]_{<r}:=\sum_i\sum_{j:d(x_i,x_j)\leq  5r} P_{Q_j(r)}TP_{Q_i(r)} 
\end{eqnarray}
and
\begin{eqnarray}\label{e4.m2}
[T]_{>r}:=\sum_i\sum_{j:d(x_i,x_j)>  5r} P_{Q_j(r)}TP_{Q_i(r)}.
\end{eqnarray}
 For the on-diagonal part $[T]_{<r}$, we have the following result.

\begin{lemma}\label{ondiagnal}
Assume that $T$ is a bounded operator from $L^p(X)$ to $L^q(X)$ ($p\leq q$). 
 Then   the on-diagonal part $[T]_{<r}$
is   bounded on from $L^p(X)$ to $L^q(X)$ and there exists a constant  $C=C(n)>0$ independent of $r$ such that 
$$
\|[T]_{<r}\|_{p\to q}\leq C\|T\|_{p\to q}.
$$
\end{lemma}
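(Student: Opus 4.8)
\textbf{Proof plan for Lemma~\ref{ondiagnal}.}

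The plan is to exploit the fact that the on-diagonal part $[T]_{<r}$ is, up to bounded overlap, a sum of operators of the form $P_{Q_j(r)} T P_{Q_i(r)}$ with $d(x_i,x_j)\le 5r$, and that each index $i$ is paired with only boundedly many indices $j$ (and vice versa), with the bound depending only on the doubling constant $n$. First I would fix $r>0$, write $Q_i = Q_i(r)$, and recall from Section~\ref{sec2.1} that $Q_i \subset B_i := B(x_i,r)$, that the $x_i$ are $r/4$-separated, and that $\sup_x \inf_i d(x,x_i)\le r/4$. The key combinatorial observation is: for each fixed $i$, the set $\{ j : d(x_i,x_j)\le 5r\}$ has cardinality at most some $N = N(n)$. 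Indeed, such $x_j$ all lie in $B(x_i,6r)$, the balls $B(x_j,r/8)$ are pairwise disjoint and contained in $B(x_i,7r)$, and the doubling/strong homogeneity property \eqref{e1.2} bounds $V(x_i,7r)/V(x_j,r/8)$ uniformly; summing gives $\#\{j\} \le N(n)$. By symmetry the same bound holds for $\#\{ i : d(x_i,x_j)\le 5r\}$ for each fixed $j$.

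Next I would estimate $\|[T]_{<r}f\|_q$ for $f\in L^p(X)$. Since the $Q_j$ are pairwise disjoint,
\begin{eqnarray*}
\|[T]_{<r}f\|_q^q = \sum_j \Big\| \sum_{i: d(x_i,x_j)\le 5r} P_{Q_j} T P_{Q_i} f \Big\|_q^q
\le \sum_j \Big( \sum_{i: d(x_i,x_j)\le 5r} \| P_{Q_j} T P_{Q_i} f\|_q \Big)^q.
\end{eqnarray*}
Each inner sum has at most $N(n)$ terms, so by the power-mean (or H\"older) inequality the inner sum raised to the $q$-th power is $\le N(n)^{q-1}\sum_{i: d(x_i,x_j)\le 5r} \|P_{Q_j}TP_{Q_i}f\|_q^q \le N(n)^{q-1}\sum_{i: d(x_i,x_j)\le 5r}\|T(P_{Q_i}f)\|_q^q$, using $\|P_{Q_j}(\cdot)\|_q\le\|\cdot\|_q$. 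Summing over $j$ and interchanging the order of summation, each $i$ appears for at most $N(n)$ values of $j$, giving
\begin{eqnarray*}
\|[T]_{<r}f\|_q^q \le N(n)^q \sum_i \|T(P_{Q_i}f)\|_q^q \le N(n)^q \|T\|_{p\to q}^q \sum_i \|P_{Q_i}f\|_p^q.
\end{eqnarray*}
Finally, since $p\le q$ and $\{P_{Q_i}f\}_i$ are supported on disjoint sets, $\sum_i \|P_{Q_i}f\|_p^q \le \big(\sum_i \|P_{Q_i}f\|_p^p\big)^{q/p} = \|f\|_p^q$ (the $\ell^p\hookrightarrow\ell^q$ embedding). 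This yields $\|[T]_{<r}f\|_q \le N(n)\|T\|_{p\to q}\|f\|_p$, as desired, with $C=C(n)=N(n)$ independent of $r$. The endpoint cases $q=\infty$ and $p=\infty$ are handled by the obvious modifications (taking suprema in place of the $\ell^q$-sums).

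The main obstacle is really just the bounded-overlap counting in the first paragraph: one has to be careful that the constant $N$ depends only on $n$ (and the fixed numerical constants $1/4$, $5$, etc.\ appearing in the partition construction), uniformly in $r$. This follows cleanly from the separation of the $x_i$ together with \eqref{e1.2} applied with a bounded dilation factor, but it must be stated explicitly since it is the only place scale-invariance is used; everything after that is the routine Schur-type argument already used in Lemma~\ref{le2.2}.
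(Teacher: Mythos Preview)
Your proof is correct and follows essentially the same approach as the paper: split the $L^q$ norm using the disjointness of the $Q_j$, use the bounded overlap $\#\{i:d(x_i,x_j)\le 5r\}\le N(n)$ to pass from the $q$-th power of a finite sum to a sum of $q$-th powers, interchange the order of summation, and finish with the $\ell^p\hookrightarrow\ell^q$ embedding on $\sum_i\|P_{Q_i}f\|_p^q$. The paper's argument is identical in structure but leaves the counting bound $N(n)$ and the final $\ell^p\hookrightarrow\ell^q$ step implicit; your write-up makes both explicit, which is a small improvement in clarity rather than a different method.
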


\begin{proof} Note that 
\begin{eqnarray*}
 \left\|\sum_{i}\sum_{j:d(x_i,x_j)\leq 5r} P_{Q_j(r)}TP_{Q_i(r)}f\right\|^q_{q} 
&&= \sum_{j}\Big\|\sum_{i:d(x_i,x_j)\leq 5r} P_{Q_j(r)}TP_{Q_i(r)}f\Big\|^q_{q}\\
&&\leq C\sum_{j}\sum_{i:d(x_i,x_j)\leq 5r} \Big\|P_{Q_j(r)}TP_{Q_i(r)}f\Big\|^q_{q}\\
&&\leq C\sum_{i}\sum_{j:d(x_i,x_j)\leq 5r}\| T\|_{p\to q}^q\|P_{Q_i(r)}f\|^q_{p}\\
&&\leq C\| T\|_{p\to q}^q\sum_{i}\|P_{Q_i(r)}f\|^q_{p}\\
&&\leq C\| T\|_{p\to q}^q\|f\|^q_{p}.
\end{eqnarray*}
This  proves Lemma~\ref{ondiagnal}.
\end{proof}

\medskip

\begin{proof}[Proof of Theorem~\ref{th4.5}]
Let   $\phi \in C_c^{\infty}(\mathbb R) $   be  a function such that $\supp \phi\subseteq \{ \xi: 1/4\leq |\xi|\leq 1\}$ and
 $
\sum_{\ell\in \ZZ} \phi(2^{-\ell} \lambda)=1$ for all
${\lambda>0}.
$
Set $\phi_0(\lambda)= 1-\sum_{\ell=1}^{\infty} \phi(2^{-\ell} \lambda)$. By the Fourier inversion formula, we can write
\begin{eqnarray}\label{e4.11}
 AF(A) =\sum_{\ell=0}^{\infty} AF^{(\ell)}({A}),
\end{eqnarray}
where
\begin{eqnarray} \label{es1}
F^{(0)}(\lambda)=\frac{1}{2\pi}\int_{-\infty}^{+\infty}
 \phi_0(s) \hat{F}(s) e^{is\lambda} \;ds
\end{eqnarray}
and
\begin{eqnarray} \label{es2}
F^{(\ell)}(\lambda) =\frac{1}{2\pi}\int_{-\infty}^{+\infty}
 \phi(2^{-\ell}s) \hat{F}(s) e^{is\lambda} \;ds.
\end{eqnarray}

We now estimate $L^p$-$L^p$ norm of the operator $ AF^{(\ell)}({A}), \ell\geq 0.$
Let $N\geq 1$ be    a constant   to be chosen later. For every $\ell\geq 0$,  we follow \eqref{e4.m1} and \eqref{e4.m2} to write 
\begin{eqnarray}\label{e4.99}
AF^{(\ell)}({A})=:[AF^{(\ell)}({A})]_{<N\tau}+[AF^{(\ell)}({A})]_{>N\tau}.
\end{eqnarray}

\bigskip

\noindent
{\underline{Estimate of  the term $[AF^{(\ell)}({A})]_{<N\tau}$.}} \ 
Let   $\psi \in C_c^{\infty}(\mathbb R) $   be  a function such that  $\psi(\lambda)=1$ for $\lambda\in [R_1+\gamma/2, R_2-\gamma/2]$ and $\supp \psi\subset [R_1,R_2]$. We write
$$
[AF^{(\ell)}({A})]_{<N\tau}=[\psi(A)AF^{(\ell)}({A})]_{<N\tau}+
[(1-\psi(A))AF^{(\ell)}({A})]_{<N\tau}.
$$
Observe that
\begin{eqnarray*}
 \left\|[\psi(A)AF^{(\ell)}({A})]_{<N\tau}\right\|^p_{p} 
 &&\leq C
 \sum_{j}\sum_{\lambda: d(x_j, x_\lambda)\leq 5N\tau}\| P_{Q_\lambda(N\tau)}A\psi(A) F^{(\ell)}({A})P_{Q_j(N\tau)}f\|^p_{p}.
 \end{eqnarray*}
We use the H\"older inequality to obtain 
\begin{eqnarray*}
\| P_{Q_\lambda(N\tau)}A\psi(A) F^{(\ell)}({A})P_{Q_j(N\tau)}f\|_{p} 
&\leq&  
 \mu(Q_\lambda(N\tau))^{{\sigma_p}}\| P_{Q_\lambda(N\tau)}A\psi(A) F^{(\ell)}({A})P_{Q_j(N\tau)}f\|_{2}\\
 &\leq&  C
 \mu(Q_j(N\tau))^{{\sigma_p}}\|F^{(\ell)}\|_q \|V_{\tau}^{1/2-1/p}P_{Q_j(N\tau)}f\|_{p}.
\end{eqnarray*}
Note that in the last inequality we used the condition $d(x_j,x_\lambda)\leq 5N\tau$ and 
 the fact that   $\supp \psi F^{(\ell)}\subset [R_1,R_2]$, 
 and it follows from  the property  \eqref{st}   that
 $
\|A \psi(A) F^{(\ell)}({A})V_{\tau}^{\sigma_p}\|_{p\to 2} \leq \|F^{(\ell)}\|_q.
 $
Hence,  
\begin{eqnarray}\label{kkl}
 \left\|[\psi(A)AF^{(\ell)}({A})]_{<N\tau}\right\|^p_{p} 
&&\leq  CN^{np\sigma_p}\|F^{(\ell)}\|^p_q  \sum_{j}\mu(Q_j(\tau))^{p\sigma_p}\|V_{\tau}^{-\sigma_p}P_{Q_j(N\tau)}f\|^p_{p}\nonumber\\
&&\leq CN^{np\sigma_p} \|F^{(\ell)}\|^p_q\sum_{j}\|P_{Q_j(N\tau)}f\|^p_{p}\nonumber\\
&&\leq CN^{np\sigma_p}\|F^{(\ell)}\|^p_q\|f\|^p_{p}.
\end{eqnarray}
On the other hand, we apply Lemma~\ref{ondiagnal} and Theorem~\ref{th3.1} to obtain that  for some $s>n/2+1$ and every large number $M>0$,
\begin{eqnarray*}
\|[(1-\psi(A))AF^{(\ell)}({A})]_{<N\tau}\|_{p\to p}&\leq& C \|(1-\psi(A))AF^{(\ell)}({A})\|_{p\to p}\\
&\leq &C\|(1-\psi(\cdot))F^{(\ell)}(\cdot)\|_{H^s} 
 \leq  C_{\sigma,M}2^{-M\ell}\|F\|_q.
\end{eqnarray*}
This, together with \eqref{kkl},  yields 
\begin{eqnarray}\label{eaa}
\|[AF^{(\ell)}({A})]_{<N\tau}f\|^p_{p}
\leq C N^{np\sigma_p}\Big[\|F^{(\ell)}\|^p_q+2^{-Mp\ell}\|F\|^p_q\Big]\|f\|^p_{p}.
\end{eqnarray}

\medskip

\noindent
{\underline{Estimate of  the term $[AF^{(\ell)}({A})]_{\geq N\tau}$.}} \  From \eqref{es1} and \eqref{es2}, 
we have that
\begin{eqnarray}\label{nnn}
[AF^{(\ell)}({A})]_{>N\tau}=\frac{1}{2\pi}\int_{-\infty}^{+\infty}
 \phi(2^{-\ell}t) \hat{F}(t) [Ae^{it A}]_{>N\tau} \;dt.
\end{eqnarray}
To estimate the term $ [Ae^{it A}]_{>N\tau}$, we write
\begin{eqnarray*}
Ae^{it A}
&=&e^{it A} [A]_{>N\tau/2} 
 +[e^{it A}]_{>N\tau/2}[A]_{<N\tau/2} 
  + [e^{it A}]_{<N\tau/2}[A]_{<N\tau/2}\\
 &=:&T_1+T_2+T_3.
\end{eqnarray*}
It is easy to see that
$$
\sum_{i}\sum_{j:d(x_i,x_j)> 5N\tau} P_{Q_j(N\tau)}T_3P_{Q_i(N\tau)}=0,
$$
and so 
$$
[Ae^{it A}]_{>N\tau} =:[T_1]_{>N\tau}
+[T_2]_{>N\tau}.
$$
By Lemma~\ref{ondiagnal} with $r=N\tau$, we obtain
\begin{eqnarray} \label{et0}
\|[Ae^{it  A}]_{>N\tau}\|_{p\to p}&\leq& \|T_1-[T_1]_{<N\tau}\|_{p\to p}+\|T_2-[T_2]_{<N\tau}\|_{p\to p}\nonumber\\
&\leq& (1+C)\|T_1\|_{p\to p}+(1+C)\|T_2\|_{p\to p}, 
\end{eqnarray}
and hence we have to estimate $\|T_i\|_{p\to p}, i=1,2.$

Let us estimate the term $\|T_1\|_{p\to p}$. We write
\begin{eqnarray}\label{et4}
\|T_1\|_{p\to p}&=&\|e^{itA}[A]_{>N\tau/2}\|_{p\to p}\nonumber\\
&\leq&
\left\|V_{\tau}^{{-\sigma_p}}\right\|_{X_{\tau}^{p,2}\to L^p}
\left\|V_{\tau}^{{\sigma_p}}e^{itA}
V_{\tau}^{{-\sigma_p}}\right\|_{X_{\tau}^{p,2}\to X_{\tau}^{p,2}}
\left\|V_{\tau}^{{\sigma_p}}[A]_{>N\tau/2}\right\|_{L^p \to X_{\tau}^{p,2}}.
\end{eqnarray}
We have that  $\|V_{\tau}^{{-\sigma_p}}\|_{X_{\tau}^{p,2}\to L^p}\leq C$ by definition of $X^{p,2}$
and H\"older's  inequality. Also it follows from \eqref{e3.6}  that
\begin{eqnarray}\label{e33.6}
\left\|V_{\tau}^{{\sigma_p}}e^{itA}V_{\tau}^{{-\sigma_p}}\right\|_{X_{\tau}^{p,2}\to X_{\tau}^{p,2}}\leq C(1+|t|)^{n\sigma_p}.
\end{eqnarray}
To handle the term 
$
\big\|V_{\tau}^{{\sigma_p}}[A]_{>N\tau/2}\big\|_{L^p \to X_{\tau}^{p,2}},
$
we note that
\begin{eqnarray*}
\left\|V_{\tau}^{{\sigma_p}}[A]_{>N\tau/2}f\right\|_{X_{\tau}^{p,2}}&=&
\left(\sum_{j=1}^{\infty}\|P_{Q_j(\tau)}V_{\tau}^{{\sigma_p}}[A]_{>N\tau/2}f\|_2^p\right)^{1/p}\\
&\leq& \left(\sum_j\left[\sum_{\ell:d(x_\ell,x_j)>N\tau/2}
\left\|P_{Q_j(\tau)}V_{\tau}^{{\sigma_p}}AP_{Q_\ell(\tau)}\right\|_{p\to2}
\left\|P_{Q_\ell(\tau)}f\right\|_p\right]^p\right)^{1/p}.
\end{eqnarray*}
For every $j, \ell$, we set $a_{j\ell}=\|P_{Q_j(\tau)}V_{\tau}^{{\sigma_p}}AP_{Q_\ell(\tau)}\|_{p\to 2}$
and $b_\ell=\|P_{Q_\ell(\tau)}f\|_p$. This gives
\begin{eqnarray*}
\|V_{\tau}^{{\sigma_p}}[A]_{>N\tau/2}f\|_{X_{\tau}^{p,2}}&\leq& \Big\|\sum_\ell a_{j\ell}b_\ell\Big\|_{\ell^p}\\
&\leq& \Big\|(a_{j\ell})\Big\|_{\ell^p\to \ell^p}\|b_\ell\|_{\ell^p}\\
&\leq&  \Big\|(a_{j\ell})\Big\|_{\ell^1\to \ell^1}^\theta \Big\|(a_{j\ell})
\Big\|_{\ell^\infty\to \ell^\infty}^{1-\theta} \|f\|_{p}
\end{eqnarray*}
by  interpolation.
It then follows from condition~\eqref{pVEp2}  that
$$
 \|(a_{j\ell})\|_{\ell^1\to \ell^1}=
 \sup_\ell\sum_{j:d(x_\ell,x_j)>N\tau/2}\Big\|P_{Q_j(\tau)}V_{\tau}^{{\sigma_p}}AP_{Q_\ell(\tau)}\Big\|_{p\to 2}\leq CN^{-a}
$$
and
$$
 \|(a_{j\ell})\|_{\ell^\infty\to \ell^\infty}=\sup_j\sum_{\ell:d(x_\ell,x_j)>N\tau/2}
 \Big\|P_{Q_j(\tau)}V_{\tau}^{{\sigma_p}}AP_{Q_\ell(\tau)}\Big\|_{p\to 2}\leq CN^{-a},
$$
and so 
 \begin{equation*} 
 \Big\|V_{\tau}^{{\sigma_p}}[A]_{>N\tau/2}\Big\|_{L^p \to X_{\tau}^{p,2}}\leq CN^{-a}.
 \end{equation*}
This, in combination with \eqref{e33.6} and \eqref{et4}, shows that
 \begin{eqnarray}\label{et1}
\|T_1\|_{p\to p}&=&\|e^{itA}[A]_{>N\tau/2}\|_{p\to p} \leq CN^{-a}(1+t)^{n\sigma_p}.  
\end{eqnarray}
  
Next we estimate   the term $\|T_2\|_{p\to p}$.   We write
\begin{eqnarray}\label{et21}
\|T_2\|_{p\to p}&=&\|[e^{itA}]_{>N\tau/2}[A]_{<N\tau/2}\|_{p\to p}\nonumber\\
&\leq&
\left\|V_{\tau}^{{-\sigma_p}}\right\|_{X_{\tau}^{p,2}\to L^p}
\left\|V_{\tau}^{{\sigma_p}}[e^{itA}]_{>N\tau/2}
V_{\tau}^{{-\sigma_p}}\right\|_{X_{\tau}^{p,2}\to X_{\tau}^{p,2}}
\left\|V_{\tau}^{{\sigma_p}}[A]_{<N\tau/2}\right\|_{L^p \to X_{\tau}^{p,2}}.
\end{eqnarray}
We have that  $\|V_{\tau}^{{-\sigma_p}}\|_{X_{\tau}^{p,2}\to L^p}\leq C$ by definition of $X^{p,2}$
and H\"older's  inequality.  Also, we follow a similar argument as in \eqref{eq001}  to obtain  
\begin{eqnarray}\label{et22}
\|V_{\tau}^{{\sigma_p}}[A]_{<N\tau/2}\|_{L^p \to X_{\tau}^{p,2}}\leq C.
\end{eqnarray}
In order to deal with  the term $\|V_{\tau}^{{\sigma_p}}[e^{itA}]_{>N\tau/2}
V_{\tau}^{{-\sigma_p}}\|_{X_{\tau}^{p,2}\to X_{\tau}^{p,2}}$, we  apply the complex interpolation method.
To do it, we  let  ${\bf S}$ denote the closed strip $0\leq {\rm Re} z\leq 1$
 in the complex plane. For $z\in {\bf S}$, we consider an analytic family
of  operators:
$$
T_z=  V_{\tau}^{z\over 2}[e^{itA}]_{>N\tau/2} V_{\tau}^{-{z\over 2}}.
$$

\smallskip

\noindent
{\bf Case 1:}  $z=1+iy,\,y\in \mathbb{R}$ and $p=1$.\
 In this case, we observe that
\begin{eqnarray}\label{e3.2}
\|T_{1+iy} f\|_{X_{\tau}^{1,2}}&=&\sum_{j=1}^{\infty}\big\|P_{Q_j(\tau)}V_{\tau}^{\frac{1+iy}{2}}
[e^{itA}]_{>N\tau/2} V_{\tau}^{-\frac{1+iy}{2}}f\big\|_2\nonumber\\
&\leq & \sum_{\ell=1}^{\infty}\sum_{j=1}^{\infty}
\big\|P_{Q_j(\tau)}V_{\tau}^{\frac{1+iy}{2}}[e^{itA}]_{>N\tau/2} V_{\tau}^{-\frac{1+iy}{2}}
P_{Q_{\ell}(\tau)}f\big\|_2\nonumber\\
&\leq& \sum_{\ell=1}^{\infty} \sum_{j: \, d(x_\ell,x_j)>N\tau/2}
      \big\|P_{Q_j(\tau)}V_{\tau}^{\frac{1+iy}{2}}e^{itA}V_{\tau}^{-\frac{1+iy}{2}}
      P_{Q_{\ell}(\tau)}f\big\|_2.
	  \end{eqnarray}
By the Cauchy-Schwarz  inequality,
\begin{eqnarray*} 
 &&\hspace{-0.8cm} \sum_{j: \, d(x_\ell,x_j)>N\tau/2}
      \big\|P_{Q_j(\tau)}V_{\tau}^{\frac{1+iy}{2}}e^{itA}V_{\tau}^{-\frac{1+iy}{2}}
      P_{Q_{\ell}(\tau)}f\big\|_2\\
	  &&\leq \left(\sum_{j: \, d(x_{\ell},x_j)>N\tau/2}\mu(Q_j(\tau))\eta_{\ell}(x_j)^{-2{[a]}}\right)^{1/2}
\left(  \sum_{d(x_{\ell},x_j)>N\tau/2}\eta_{\ell}(x_j)^{2{[a]}} 
\left\|P_{Q_j(\tau)}e^{itA} V_{\tau}^{-\frac{1+iy}{2}}P_{Q_{\ell}(\tau)}f\right\|^2_2\right)^{1/2},
\end{eqnarray*}
where $\eta_{\ell}(x):=d(x,x_\ell)/\tau$.  
Since
\begin{eqnarray*} 
\sum_{j: \, d(x_{\ell},x_j)>N\tau/2}\mu(Q_j(\tau))\eta_{\ell}(x_j)^{-2{[a]}}
 &\leq& C \sum_{i=0}^{\infty}
 \sum_{j:\, 2^{i}N\tau <  d(x_{\ell},x_j)\leq 2^{i +1}N\tau} \mu(Q_j(\tau))\eta_{\ell}(x_j)^{-2{[a]}} \nonumber\\
 &\leq& C \sum_{i=0}^{\infty} (2^{i} N)^n \mu(Q_{\ell}(\tau)) (2^{i} N)^{-2{[a]}}  \nonumber\\
 &\leq& C N^{-2{[a]}+n } \mu(Q_{\ell}(\tau))  
\end{eqnarray*}
and
\begin{eqnarray*}
 \sum_{j:\, d(x_{\ell},x_j)>N\tau/2}\eta_{\ell}(x_j)^{2{[a]}} 
\big\|P_{Q_j(\tau)}e^{itA} V_{\tau}^{-\frac{1+iy}{2}}P_{Q_{\ell}(\tau)}f\big\|^2_2  
\leq C\big\|\eta_{\ell}^{{[a]}}e^{itA}V_{\tau}^{-\frac{1+iy}{2}}P_{Q_{\ell}(\tau)}f\big\|_2^2, 
\end{eqnarray*}
we have    
\begin{eqnarray*} 
 \|T_{1+iy} f\|_{X_{\tau}^{1,2}}
 &\leq& C\sum_{\ell=1}^{\infty}N^{-{[a]}+n/2} \mu(Q_{\ell}(\tau))^{1/2}
 \big\|\eta_{\ell}^{{[a]}}e^{itA}V_{\tau}^{-\frac{1+iy}{2}}P_{Q_{\ell}(\tau)}f\big\|_2 
\end{eqnarray*}

Following  an argument as in  \eqref{comutator} and \eqref{e3.61} we  obtain 
\begin{eqnarray*}
\|\eta_{\ell}^{{[a]}}e^{itA}(1+\eta_{\ell})^{-{[a]}} \|_{2\to 2}
\leq C(1+t)^{[a]}.
\end{eqnarray*}
From this, it follows  that
\begin{eqnarray*}
\|T_{1+iy} f\|_{X_{\tau}^{1,2}}&\leq&  CN^{-{[a]}+n/2} (1+t)^{[a]}\sum_{i } \mu(Q_{\ell}(\tau))^{1/2}
 \left\|V_{\tau}^{-\frac{1+iy}{2}}(1+\eta_{\ell})^{{[a]}}P_{Q_{\ell}(\tau)}f\right\|_2\\
 &\leq&  CN^{-{[a]}+n/2} (1+t)^{[a]} \sum_{i }
 \|P_{Q_{\ell}(\tau)}f\|_2\\
 &\leq& CN^{-{[a]}+n/2} (1+t)^{[a]}\|f\|_{X_{\tau}^{1,2}}.
\end{eqnarray*}

\smallskip

\noindent
{\bf Case 2:} $z=iy,\,y\in \mathbb{R}$ and $p=2$.\
In this case, we note that
 $$
\left\|T_{iy}\right\|_{X_{\tau}^{2,2}\to X_{\tau}^{2,2}}= \|T_{iy}\|_{2\to 2}\leq \|[e^{itA}]_{>N\tau/2}\|_{2\to 2}
\leq (1+C)\|e^{itA}\|_{2\to 2}\leq 1+C.
 $$

 \medskip
 
 From {\bf Cases 1} and {\bf  2}, we apply the complex interpolation method to obtain
\begin{eqnarray*}
\left\|V_{\tau}^{{\sigma_p}}[e^{itA}]_{>N\tau/2}V_{\tau}^{ {-\sigma_p}} f\right\|_{X_{\tau}^{p,2}}
=\left\|T_{\frac{2}{p}-1} f\right\|_{X_{\tau}^{p,2}}
  \leq C_p \left(N^{-{[a]}+n/2} (1+t)^{[a]}\right)^{\theta} \left\|f\right\|_{X_{\tau}^{p,2}}
\end{eqnarray*}
where $\theta=2/p-1$.
This implies
\begin{eqnarray*}
\|T_2\|_{p\to p}\leq C N^{(-{[a]}+n/2)(2/p-1)} (1+t)^{{[a]}(2/p-1)} 
\end{eqnarray*}
Substituting  this and estimate  \eqref{et1}  back into \eqref{et0}, yields
\begin{eqnarray*}
 \|[Ae^{it A}]_{>N\tau}\|_{p\to p}
&\leq &C\|T_1\|_{p\to p}+C\|T_2\|_{p\to p} 
\leq  C N^{(-{[a]}+n/2)(2/p-1)} (1+t)^{{[a]}(2/p-1)}.
\end{eqnarray*}
This, in combination with \eqref{nnn} and  \eqref{eaa},  shows  
\begin{eqnarray*}
\|AF^{\ell}(A)\|_{p\to p}&\leq& CN^{n(1/p-1/2)}\big[\| F^{(\ell)}(\lambda)\|_{q}+2^{-M\ell}\|F\|_q\big]\\
&+&
CN^{(-{[a]}+n/2)(2/p-1)} \int_{2^\ell}^{2^{\ell+1}}
|\phi(2^{-\ell}t)| | \hat{F}(t)|(1+t)^{{[a]}(2/p-1)} \;dt.
\end{eqnarray*}
We take $N=2^{\ell(1+p/(2{[a]}(2-p)))}$ to obtain
$$
\|AF^{\ell}(A)\|_{p\to p}\leq C2^{\ell n(1/p-1/2)(1+p/(2{[a]}(2-p)))}\left(\| F^{(\ell)}(\lambda)\|_{q}+2^{-M\ell}\|F\|_q+\|\phi_{\ell}\widehat{F}\|_2\right).
$$
After summation in  $\ell$,  we obtain
\begin{eqnarray*} 
\|AF(A)\|_{p\to p}\leq C\|F\|_{W^{s, q}}
\end{eqnarray*}
 as   in \eqref{e4.6}, whenever
  $$s>
 n \left({1\over p}-{1\over 2}\right) \left(1+{p\over 2[a](2-p)}\right)=
n \left({1\over p}-{1\over 2}\right) +\frac{n}{4[a]}.
$$  
  The proof of   Theorem~\ref{th4.5} is complete.
\end{proof}

 \medskip

 \section{Applications}
 \setcounter{equation}{0}

  As an illustration of our  results we shall  discuss some examples.
Our main results,   Theorems~\ref{th3.1} and  ~\ref{th4.5}.
  can be applied to all examples which are
 discussed in \cite{DOS}, \cite{COSY} and \cite{SYY}.

\subsection{Symmetric  Markov chains}\
In  \cite{A1} Alexopoulos considers  bounded symmetric Markov operator $A$ 
on a homogeneous space $X $
whose  powers $A^k$ have kernels  $A^k (x,y)$ satisfying  the following Gaussian estimates
\begin{equation} \label{eaaa}
0 \le  A^k (x,y) \le \frac{C}{V(x,{k^{1/2}})}
\exp\left(-\frac{d(x,y)^2}{k} \right)
\end{equation}
for all $k\in \N$. The  operator $I-A$ is symmetric and satisfies for every $f\in L^2(X)$,
$$
\langle(I-A)f, \, f\rangle ={1\over 2} \iint (f(x)-f(y))^2 A(x,y) d\mu(x)d\mu(y)\geq 0.
$$
Thus,  $I-A$ is positive. In addition, 
 $
\|(I-A)f\|_2\leq \|f\|_2 +\|Af\|_2\leq 2\|f\|_2.
 $
Hence, $I-A$ admits the spectral decomposition (see for example, \cite{Mc}) which allows to write  
$$
I-A=\int_0^2 (1- \lambda) dE_{A}({\lambda}).
$$
Let $F$ be a bounded Borel measurable function. Then by the spectral theorem we can define the operator
$$
F(I-A)=\int_0^2 F(1-\lambda) dE_{A}({\lambda}).
$$
Note that $I-A$ is bounded on $L^2(X)$ and $\|F(I-A)\|_{2\to 2}\leq \|F\|_{\infty}$. 
In  \cite{A1} Alexopoulos
  obtained the following spectral multiplier type 
result. In the sequel, let us consider a function $0\leq \eta\in C^\infty(\R)$  and let us assume that $\eta(t)=1$ for $t\in [1, 2]$ and that 
$\eta(t)=0$ for $t\notin  [1/2, 4]$.

\begin{theorem}\label{Thm5.1} 
	Assume that $F$ is a bounded Borel function with $\supp F \subset [0,1/2]$ and that 
	$$
	\sup_{0<t\le 1}\|\eta(\cdot)F(t\cdot )\|_{W^{n/2+\epsilon, \infty}}
	<\infty 
	$$  
	for some $\epsilon >0$. 
	Then under above assumption on the operator $A$,  the spectral multiplier 
	$F(I-A)$ extends to a bounded operator  on $L^p$ for $1 \le p \le \infty$. 
\end{theorem}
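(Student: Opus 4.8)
The plan is to deduce Theorem~\ref{Thm5.1} from the general machinery built in Sections~2--4, applied to the bounded self-adjoint operator $A$. The first step is to verify that the Gaussian kernel bound \eqref{eaaa} for $k=1$ (and the doubling property) gives the pointwise estimate $|A(x,y)| \le C V(x,1)^{-1}\exp(-d(x,y)^2)$, which in particular yields the polynomial off-diagonal decay, so that $A$ satisfies \eqref{pVEp2} with $\tau = 1$, $p=1$, and \emph{any} $a>0$; in particular we may take $a>[n/2]+1$ as required by Theorems~\ref{th3.1} and~\ref{th4.5}. I would then record that $I-A$ is a non-negative self-adjoint operator which is bounded on $L^2$ with spectrum contained in $[0,2]$, as already observed in the text, and that $F(I-A) = (I-A)G(I-A)$ whenever we set $G(\lambda)=F(\lambda)/\lambda$ on the support of $F$ away from $0$; more usefully, since $\supp F\subset[0,1/2]$ is a compact interval bounded away from the top of the spectrum, one can replace the operator $A$ by $B := I-A$ and the multiplier $F$ by a compactly supported one.

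The core step is to produce a restriction type estimate \eqref{st} for the operator $B = I-A$ with $q=\infty$ on a suitable interval. This is exactly Remark~\ref{rem01}: from \eqref{pVEp2} with $a>[n/2]+1$ and Proposition~\ref{prop2.1} one has $\|BG(B)V_1^{\sigma_p}\|_{p\to 2} \le \|G(B)\|_{2\to2}\,\|BV_1^{\sigma_p}\|_{p\to2} \le C\|G\|_\infty$, so \eqref{st} holds with $q=\infty$ on any bounded interval $[R_1,R_2]$ containing the relevant part of the spectrum. Actually, here one should be a little careful: the operator in Theorem~\ref{th4.5} is $AF(A)$, i.e. an extra factor of the operator itself, so I would apply Theorem~\ref{th4.5} (together with its Remark~\ref{rem01}, equation~\eqref{ee4.6}) to the operator $B=I-A$ to get
\begin{equation*}
\|BF_0(B)\|_{p\to p} \le C\|F_0\|_{W^{s,\infty}}, \qquad s > n\Big(\frac1p-\frac12\Big)+\frac{n}{4[a]},
\end{equation*}
for $F_0$ compactly supported in the interior of $[R_1,R_2]$. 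Since $a$ can be taken arbitrarily large (as \eqref{eaaa} gives Gaussian, hence super-polynomial, decay), the term $\frac{n}{4[a]}$ can be made as small as we wish, so the threshold becomes $s > n(1/p-1/2) = n\sigma_p$; and the hypothesis $\sup_{0<t\le1}\|\eta(\cdot)F(t\cdot)\|_{W^{n/2+\epsilon,\infty}}<\infty$ controls $\|F_0\|_{W^{s,\infty}}$ for $s$ slightly above $n/2 \ge n\sigma_p$ uniformly, by a Littlewood--Paley/scaling decomposition of $F$ near the origin (the support of $F$ is near $0$, which is the bottom of the spectrum of $B=I-A$, so the natural dyadic decomposition $F = \sum_\ell F(\cdot)\eta(2^\ell\cdot)$ is the one to use, matching the $\sup_{0<t\le1}$ in the hypothesis). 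This handles $1\le p\le 2$; the range $2\le p\le\infty$ follows by duality since $A$, being a symmetric Markov operator, is self-adjoint on each $L^p$ in a compatible way and $F$ is real-valued (or one takes adjoints of the $p<2$ bounds).

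The main obstacle, I expect, is the bookkeeping that converts the \emph{scale-invariant} hypothesis $\sup_{0<t\le1}\|\eta F(t\cdot)\|_{W^{n/2+\epsilon,\infty}}<\infty$ into a clean application of Theorem~\ref{th4.5}, whose hypothesis is stated for a \emph{fixed} compactly supported $F_0$ with $\supp F_0$ bounded away from the endpoints of $[R_1,R_2]$. One must: (i) decompose $F = \sum_{\ell\ge0} F\cdot\eta(2^\ell\,\cdot\,)$ so that each piece $F_\ell$ lives at dyadic frequency scale $2^{-\ell}$ near $0$; (ii) for each $\ell$, apply the theorem to the rescaled multiplier $F_\ell(2^{-\ell}\,\cdot\,)$ and the rescaled operator $2^\ell B$ — but $2^\ell B = 2^\ell(I-A)$ is not of the form covered directly, so instead one notes $F_\ell(B) = [\eta F(2^{-\ell}\cdot)](2^\ell B)$ and checks that $2^\ell B$ again satisfies \eqref{pVEp2} at scale $\tau = 2^{-\ell/2}$ with the \emph{same} constant (this is where \eqref{eaaa} at general $k$, i.e. $A^k$, or equivalently the Davies--Gaffney/finite-propagation heuristics for $I-A$, is really used, so that the restriction estimate is uniform in $\ell$); and (iii) sum the resulting bounds $\|BF_\ell(B)\|_{p\to p} \le C 2^{-\ell\epsilon'}\sup_t\|\eta F(t\cdot)\|_{W^{n/2+\epsilon,\infty}}$ over $\ell$, the geometric decay coming from the $\epsilon>0$ slack between $n/2+\epsilon$ and the critical $n\sigma_p \le n/2$. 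Everything else — the verification of \eqref{pVEp2}, the self-adjointness, the reduction to compactly supported multipliers — is routine given the results already proved in the excerpt.
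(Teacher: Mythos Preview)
The paper does not prove Theorem~\ref{Thm5.1}: it is quoted as Alexopoulos' result from \cite{A1}, and the only comment the authors make towards a proof via their own machinery is the remark after Theorem~\ref{coro6.2} that ``using similar technique as in \cite{SYY} one can obtain the singular integral version of Theorem~\ref{Thm5.2} stated in Theorem~\ref{Thm5.1}. We do not discuss the details here.'' So there is no proof in the paper to compare against; what can be compared is your strategy versus the route the paper actually develops for the dyadic pieces, namely Theorem~\ref{coro6.2}.

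On that comparison, your step~(ii) contains a genuine gap. You propose to verify \eqref{pVEp2} for the operator $2^\ell B = 2^\ell(I-A)$ at scale $\tau = 2^{-\ell/2}$. This cannot work as stated: \eqref{pVEp2} forces $L^2$-boundedness with a uniform constant (Proposition~\ref{prop2.1}), whereas $\|2^\ell(I-A)\|_{2\to 2}$ grows like $2^\ell$. The paper's device is different: for each integer $k$ one applies Theorem~\ref{th4.5} not to a rescaling of $I-A$ but to the \emph{power} $A^k$, which by \eqref{eaaa} satisfies \eqref{pVEp2} with $\tau = k^{1/2}$ uniformly in $k$. The link to the multiplier of $I-A$ is the change of variables $G(\lambda)=F(k(1-\lambda^{1/k}))/\lambda$, giving $G(A^k)A^k = F(k(I-A))$; this is exactly the content of the proof of Theorem~\ref{coro6.2}, and it is the replacement for your rescaling idea.

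Your step~(iii) also has a gap. Even after fixing step~(ii), Theorem~\ref{coro6.2} yields $\|F_\ell(I-A)\|_{p\to p}\le C\|\eta F(2^{-\ell}\cdot)\|_{W^{s,q}}$ with a constant \emph{uniform} in $\ell$, not decaying. The hypothesis of Theorem~\ref{Thm5.1} only says this right-hand side is bounded, so a naive triangle-inequality sum over $\ell$ diverges; the ``$\epsilon$ slack between $n/2+\epsilon$ and $n\sigma_p$'' does not produce a factor $2^{-\ell\epsilon'}$ in the operator norm of the dyadic pieces. Passing from uniform control of dyadic pieces to $L^p$-boundedness of the full multiplier is precisely the ``singular integral'' step the authors defer to \cite{SYY}: one needs a Calder\'on--Zygmund type argument (weighted/off-diagonal bounds on the pieces, or a Hardy-space/weak-type endpoint) rather than summing norms. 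Your sketch does not supply this, and it is the nontrivial part that the paper explicitly declines to write out.
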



Our approach allows us to prove a version of Alexopoulos' result 
 under the weaker assumption of polynomial decay rather than the exponential one. We start with the following 
 statement. 

\begin{theorem}\label{Thm5.2} Let $1\leq p<2\leq q\leq \infty.$
Suppose that $(X, d, \mu)$ satisfies the doubling condition  with the doubling exponent $n$ from  \eqref{e1.2}. Assume next that $A$ is a bounded self-adjoint operator and   there exists  $k\in{\mathbb N}$ such that the kernel of 
the operator $A^k$ exists and satisfies the following estimate 
\begin{equation}\label{e8.1}
 |A^k (x,y)| \le C
\frac{1}{\max(V(x,{k^{1/2}}), {V(y,{k^{1/2}})})} \left(1+\frac{d(x,y)^2}{k} \right) ^{-N}
\end{equation}
for some    $N > n + [n/2]+1$.
Then for every $1 \le p \le \infty$,
\begin{equation}\label{e8.2}
\|F(A^k) A^k\|_{p \to p} \le C \min \left( \|F\|_{W^{s, \infty}}, \|F\|_{H^{s'}} \right)
\end{equation}
for any  $s >   n {\sigma_p} +\frac{n}{4[a]} $ and any $s' >   n {\sigma_p} + \frac{1}{2}$. 

If in addition the restriction type bounds   $\eqref{st}$ with $\tau={k^{1/2}}$ are valid on the interval $(R_1, R_2)$ 
for some $-\infty< R_1<R_2<\infty$ and supp$F\subset (R_1+\gamma, R_2-\gamma)$
for some $ \gamma >0$ then
\begin{equation}\label{e8.3}
\|F(A^k) A^k \|_{p \to p} \le C\|F\|_{W^{s, q}}
\end{equation}
$s>   n {\sigma_p} + \frac{n}{4[a]}$.
\end{theorem}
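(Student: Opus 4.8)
\textbf{Proof proposal for Theorem~\ref{Thm5.2}.}
The plan is to verify that the hypotheses of Theorems~\ref{th3.1} and~\ref{th4.5} are satisfied with $A$ replaced by the bounded self-adjoint operator $A^k$ and with the scale parameter $\tau = k^{1/2}$, and then simply invoke those theorems. The key point is that the pointwise bound \eqref{e8.1} on the kernel of $A^k$ encodes, after integrating, the off-diagonal estimate \eqref{pVEp2} for $A^k$. Concretely, first I would fix $\tau = k^{1/2}$ and estimate $\|P_{B(x,\tau)} V_\tau^{\sigma_p} A^k P_{B(y,\tau)}\|_{p\to 2}$ using the kernel bound: by H\"older's inequality in the $y$-variable over the ball $B(y,\tau)$ (of volume comparable to $V(y,\tau)$) one passes from $L^p$ to $L^\infty$, and the prefactor $\max(V(x,\tau),V(y,\tau))^{-1}$ together with the volume comparison \eqref{e1.3} and the polynomial factor $(1+d(x,y)^2/\tau^2)^{-N}$ produces exactly a bound of the form $C(1+d(x,y)/\tau)^{-N+(\text{error from }D)}$ after absorbing the $V_\tau^{\sigma_p}$ factor; choosing $a$ with $[n/2]+1 < a < N - n$ (possible precisely because $N > n + [n/2]+1$) shows that $A^k$ satisfies $({\rm PVE}^a_{p,2}(k^{1/2}))$ for every $p\in[1,2)$, and in fact for $p=1$. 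This is routine but is the one place where the hypothesis $N > n+[n/2]+1$ is used, and I expect it to be the main technical step.

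Once \eqref{pVEp2} is established for $A^k$ with exponent $a > [n/2]+1$, the bound \eqref{e8.2} is immediate: applying Theorem~\ref{th3.1} to the operator $A^k$ gives $\|F(A^k)A^k\|_{p\to p} \le C\|F\|_{H^{s'}}$ for $s' > n\sigma_p + 1/2$, which is the second half of the minimum. For the first half, one uses Remark~\ref{rem01}: since $A^k$ satisfies \eqref{pVEp2}, it automatically satisfies the restriction estimate $({\rm ST}^\infty_{p,2}(k^{1/2}))$ (on any interval), and Theorem~\ref{th4.5} then yields $\|A^kF(A^k)\|_{p\to p} \le C\|F\|_{W^{s,\infty}}$ for $s > n\sigma_p + n/(4[a])$. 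Combining the two bounds gives \eqref{e8.2}. Note that all constants in Theorems~\ref{th3.1} and~\ref{th4.5} depend only on the doubling data and on $a$, not on $\tau$, so there is no hidden dependence on $k$ beyond what is displayed.

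For the sharper conclusion \eqref{e8.3}, the hypothesis explicitly provides the restriction type bound $({\rm ST}^q_{p,2}(k^{1/2}))$ on the interval $(R_1,R_2)$, and $\supp F \subset (R_1+\gamma, R_2-\gamma)$ is assumed. Thus one applies Theorem~\ref{th4.5} directly with this $q$, obtaining $\|A^kF(A^k)\|_{p\to p} \le C\|F\|_{W^{s,q}}$ for $s > n\sigma_p + n/(4[a])$, which is exactly \eqref{e8.3}. The only thing to double-check is that the range $1\le p\le\infty$ in the statement is handled: for $p\ge 2$ one uses duality and self-adjointness of $A^k$ (the kernel bound \eqref{e8.1} is symmetric in $x,y$, so $({\rm PVE})$ for the dual exponent follows by the same computation), reducing everything to $1\le p <2$ where Theorems~\ref{th3.1} and~\ref{th4.5} apply, and the case $p=2$ is the trivial $L^2$ bound $\|F(A^k)A^k\|_{2\to 2}\le \|F\|_\infty\|A^k\|_{2\to 2}$. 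In summary, the proof is essentially a verification that \eqref{e8.1} implies \eqref{pVEp2} for $A^k$, after which everything follows from the abstract theorems of Sections 3 and 4.
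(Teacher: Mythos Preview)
Your proposal is correct and follows essentially the same approach as the paper: the paper's proof simply observes that the kernel bound \eqref{e8.1} implies that $A^k$ satisfies the property $({\rm PVE}^a_{p,2}(k^{1/2}))$ with $a = N - n > [n/2]+1$ (via the remark preceding Proposition~\ref{prop2.5}), and then invokes Theorems~\ref{th3.1}, \ref{th4.5} and Remark~\ref{rem01} exactly as you describe. Your write-up is more detailed (for instance, you spell out the duality step for $p\ge 2$ and the role of Remark~\ref{rem01} in obtaining the $W^{s,\infty}$ half of \eqref{e8.2}), but the logical structure is identical.
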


\begin{proof}  It follows from \eqref{e8.1}  that the operator $A^k$ satisfies the property $\eqref{pVEp2}$ with $a=N-n$ and 
$\tau={k^{1/2}}.$ 
The theorem follows from Theorems \ref{th3.1}, \ref{th4.5} and Remark \ref{rem01}. 
\end{proof}

The following result is a direct consequence of the above theorem. 

\begin{theorem}\label{coro6.2}
Let $1\leq p<2\leq q\leq \infty.$
Suppose that $(X, d, \mu)$ satisfies the doubling condition  with the doubling exponent $n$ from \eqref{e1.2} and that $N > n + [n/2]+1${\tiny }.
Assume next that $A$ is a bounded self-adjoint operator and  that for all  $k\in{\mathbb N}$ the kernel of 
the operator $A^k$ exists and satisfies the following estimate 
\begin{equation}\label{ek}
 |A^k (x,y)| \le C
\frac{1}{\max(V(x,{k^{1/2}}), {V(y,{k^{1/2}})})} \left(1+\frac{d(x,y)^2}{k} \right) ^{-N}
\end{equation}
with the constant  $C$ independent of  $k$.
In addition we assume that the restriction type bounds   $\eqref{st} $ with $\tau={k^{1/2}}$ are valid on the interval $(R_1, R_2)\subset [-1,1]$ 
 for all $k \in \N$.  
Then for function $F$ with supp $F \subset [1/4,1/2]$
\begin{equation}\label{e55}
\|F(k(I-A))\|_{p \to p} \le C\|F\|_{W^{s,q}}
\end{equation}
for any  $s>   n {\sigma_p} +\frac{n}{4[a]}  $.
\end{theorem}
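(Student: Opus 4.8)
The plan is to deduce \eqref{e55} from the restriction‑type part \eqref{e8.3} of Theorem~\ref{Thm5.2}, applied for each $k\in\N$ to the operator $A^k$ with $\tau=k^{1/2}$. The first and main point is to rewrite $F(k(I-A))$ in the form $G_k(A^k)A^k$ (up to a harmless bounded factor) for a suitable function $G_k$. Since $\supp F\subset[1/4,1/2]$, the spectral theorem gives $F(k(I-A))=\int_{\R}F(k(1-\lambda))\,dE_A(\lambda)$, and the integrand is supported on the window $\lambda\in I_k:=[1-\tfrac1{2k},\,1-\tfrac1{4k}]$, which lies in $[1/2,1)$ uniformly in $k$. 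On $I_k$ the map $\lambda\mapsto\mu=\lambda^k$ is an increasing smooth bijection onto $J_k:=[(1-\tfrac1{2k})^k,\,(1-\tfrac1{4k})^k]$, and one checks that $\overline{\bigcup_{k\ge1}J_k}$ is a fixed compact subinterval of $(0,1)$ (in fact of $[1/2,e^{-1/4}]$); its inverse is $\mu\mapsto\mu^{1/k}$. Set $h_k(\mu):=k(1-\mu^{1/k})$ for $\mu>0$, so that $h_k(\lambda^k)=k(1-\lambda)$ on $I_k$, and fix once and for all cut‑offs $\theta\in C_c^\infty((0,1))$ with $\theta\equiv1$ on a neighbourhood of $[1/2,e^{-1/4}]$ and $\varrho\in C_c^\infty((1/3,3/2))$ with $\varrho\equiv1$ on $[1/2,1]$. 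Putting $G_k(\mu):=\theta(\mu)\,F(h_k(\mu))\,\mu^{-1}$ (and $G_k:=0$ off $\supp\theta$), a direct functional‑calculus computation — splitting the $dE_A(\lambda)$‑integral according to whether $\lambda\in I_k$, $\lambda\in[0,1]\setminus I_k$ (where $F(k(1-\lambda))=0$ since $k(1-\lambda)\notin[1/4,1/2]$), or $\lambda<0$ (where $\varrho(\lambda)=0$, which discards the part of the spectrum near $-1$ that $\mu=\lambda^k$ folds back into $J_k$ when $k$ is even) — yields the exact identity
\begin{equation*}
F(k(I-A)) = \varrho(A)\,G_k(A^k)\,A^k .
\end{equation*}

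Next, $\varrho$ vanishes near $0$, so $\widetilde\varrho:=\varrho/\mathrm{id}\in C_c^\infty$ and $\varrho(A)=\widetilde\varrho(A)A$; since $A=A^1$ already satisfies \eqref{pVEp2} with $\tau=1$ and $a=N-n>[n/2]+1$ (this follows from \eqref{ek} with $k=1$ together with $(1+d^2)^{-N}\le2^N(1+d)^{-(n+a)}$), Theorem~\ref{th3.1} gives $\|\varrho(A)\|_{p\to p}\le C\|\widetilde\varrho\|_{H^s}\le C$, with $C$ independent of $k$. On the other hand, \eqref{ek} (with constant independent of $k$) shows, exactly as in the proof of Theorem~\ref{Thm5.2}, that $A^k$ satisfies \eqref{pVEp2} with $a=N-n$ and $\tau=k^{1/2}$, uniformly in $k$; moreover $\supp G_k\subseteq J_k$ sits, together with a fixed neighbourhood, inside $(R_1,R_2)$ (taken large enough, which is where the hypothesis on the restriction bounds is used), so \eqref{st} holds for $A^k$ with $\tau=k^{1/2}$ on an interval containing $\supp G_k$ with a fixed margin $\gamma'>0$. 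Hence \eqref{e8.3} of Theorem~\ref{Thm5.2} applies and gives $\|G_k(A^k)A^k\|_{p\to p}\le C\|G_k\|_{W^{s,q}}$ with $C$ independent of $k$ (every constant entering — the homogeneity constants in \eqref{e1.2}, the constant in \eqref{ek}, the restriction constant, $a$, $\gamma'$ — is $k$‑independent). Combining the two bounds reduces \eqref{e55} to the uniform‑in‑$k$ estimate $\|G_k\|_{W^{s,q}}\le C\|F\|_{W^{s,q}}$.

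This last estimate is the only genuinely delicate point, and I expect it to be the main obstacle. Writing $G_k=\theta\cdot(F\circ h_k)\cdot(\cdot)^{-1}$, with $\theta$ and $\mu\mapsto\mu^{-1}$ fixed smooth functions on the relevant fixed compact set, it reduces to the stability of the $W^{s,q}$‑norm under the change of variables $h_k$, i.e. $\|F\circ h_k\|_{W^{s,q}}\le C\|F\|_{W^{s,q}}$ with $C$ independent of $k$. Here one uses that, on the fixed compact interval $\supp\theta\subset(0,1)$, the functions $h_k$ have all derivatives bounded above and first derivative bounded below, uniformly in $k$: indeed $\partial_\mu^{\,j}h_k(\mu)=c_j(k)\,\mu^{1/k-j}$ with $c_j(k)$ a polynomial in $1/k$, and $\mu^{1/k}$ is comparable to $1$ on $\supp\theta$ uniformly in $k$ (in fact $h_k\to-\log(\cdot)$ in $C^\infty$ on compacts of $(0,1)$). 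The standard composition estimate for Sobolev spaces — for integer $s$ via the Leibniz and chain rules, and in general by interpolation or by a Littlewood--Paley decomposition as in the proof of Theorem~\ref{th4.5} — then gives the claim with a $k$‑independent constant, and \eqref{e55} follows. All the remaining steps are routine bookkeeping with the functional calculus; the uniform control of the family $h_k$ is the crux.
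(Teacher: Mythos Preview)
Your approach is essentially the paper's: define $G_k(\mu)=F(k(1-\mu^{1/k}))/\mu$, apply Theorem~\ref{Thm5.2} to $A^k$ with $\tau=k^{1/2}$ to bound $\|G_k(A^k)A^k\|_{p\to p}$ by $\|G_k\|_{W^{s,q}}$, and then control the latter by $\|F\|_{W^{s,q}}$ uniformly in $k$ using that $h_k(\mu)=k(1-\mu^{1/k})$ has uniformly bounded derivatives on a fixed compact of $(0,1)$. Your extra cut-off $\varrho(A)$ (to handle the possible folding of negative spectrum under $\lambda\mapsto\lambda^k$ for even $k$) and your explicit justification of the composition estimate add rigor to points the paper simply asserts, but the method is the same.
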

\begin{proof}
	Note that by \eqref{ek} we have that  $\|A^k\|_{2\to 2} \le C< \infty$
	for some constant $C$ independent of $k$. It follows that the spectrum of 
	$A$ is contained in the interval $[-1,1]$.
	 For a given $k\in {\mathbb N}$, we 
define a function $G$ as
$$
G(\lambda)=\frac{F(k(1-\lambda^{1/k}))}{\lambda}.
$$
and so 
$ 
G(\lambda^k)\lambda^k=F(k(1-\lambda)).
$ 
It follows from  Theorem~\ref{Thm5.2} that for $s>   n {\sigma_p} +\frac{n}{4[a]},$
$$
\|G(A^k) A^k\|_{p \to p} \le C\|G\|_{W^{s,q}},  
$$ 
which yields 
$$
\|F(k(I-A))\|_{p \to p}=\|G(A^k) A^k\|_{p \to p} \le C\|G\|_{W^{s,q}}
=C\left\|\frac{F(k(1-\lambda^{1/k}))}{\lambda}\right\|_{W^{s,q}}.
$$
Note that $\supp F\subset [1/4,1/2]$, we have
$$
\|F(k(I-A))\|_{p \to p}\le C\left\|\frac{F(k(1-\lambda^{1/k}))}{\lambda}\right\|_{W^{s,q}}\leq C\|F\|_{W^{s,q}}.
$$
This completes the proof of Theorem~\ref{coro6.2}.
\end{proof}

\begin{remark}
	1) Note that we do not assume that the operator $A$ is Markovian. \\
	2) Using  similar technique as in \cite{SYY} one  can obtain the  singular integral version of Theorem \ref{Thm5.2} stated in Theorem \ref{Thm5.1}. We do not discuss the details here.    
\end{remark}

\medskip

\subsection{Random walk on $\Z^n$} \ In this section, we consider  random walk on the $n$-dimensional integer lattice $\Z^n$.
Define the operator $A$  acting on $l^2(\Z^n)$  by the formula
\begin{equation*}
Af({\bf {d}}) = \frac1{2n}\sum_{i=1}^{n}\sum_{j=\pm 1} f({\bf {d}}+j{\bf {e}}_i)
\end{equation*}
where ${\bf {e}}_i=(0, \ldots, 1, \ldots 0 )$ and $1$ is positioned on the  $i$-coordinate.
The aim of this section is to prove the following result. Recall that  $\eta$ is the auxiliary nonzero compactly 
supported function $\eta \in C^\infty_c[1/2,4]$ as in Theorem~\ref{Thm5.1}.


\begin{theorem}\label{th5.6} Let $1<p<\infty.$
Let $A$ be the random walk on the integer lattice defined above.
Suppose that {\rm supp}~$F\subset (0,1/n)$. Then
\begin{eqnarray}\label {e9.1}
\sup_{t>1} \|F(t(I-A)\|_{p \to p} \le C \|F\|_{H^s}
\end{eqnarray}
for any $s> n|1/p-1/2|$.

Next  we assume that   a bounded Borel
function $F: {\mathbb R_+}\to {\mathbb C}$ satisfies
{\rm supp}~$F\subset [0,1/n]$ and
 \begin{eqnarray}\label {e9.2}
 \sup_{1/n>t>0}\|\eta F(t\cdot)\|_{H^s}<\infty
\end{eqnarray}
for some $s> n|1/p-1/2|$.
Then the operator $F(I-A)$ is bounded on $L^p$ if $1<p< (2n+2)/(n+3)$ and  weak type $(1,1)$ if $p=1$.
\end{theorem}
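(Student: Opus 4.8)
The plan is to deduce Theorem~\ref{th5.6} from the general machinery of Sections~3 and~4 by first establishing the required off-diagonal and restriction type estimates for the random walk operator $A$ on $\Z^n$. First I would recall that $A=\frac{1}{2n}\sum_{i=1}^n(\tau_{e_i}+\tau_{-e_i})$ is a convolution operator on $\Z^n$ whose symbol under the Fourier transform on the torus $\T^n$ is $m(\xi)=\frac1n\sum_{i=1}^n\cos(2\pi\xi_i)$, so that $I-A$ has symbol $1-m(\xi)=\frac1n\sum_{i=1}^n(1-\cos 2\pi\xi_i)$, which is comparable to $|\xi|^2$ near $\xi=0$ (the bottom of the spectrum) and is smooth and nonzero elsewhere on $\T^n\setminus\{0\}$, vanishing again (to second order) only at $\xi=0$. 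The measure space here is $X=\Z^n$ with counting measure, which is of homogeneous type with homogeneous dimension $n$ and with $V(x,r)\sim r^n$ for $r\ge 1$ (and $V(x,r)=1$ for $r<1$). For the first assertion \eqref{e9.1}, the natural approach is to apply Corollary~\ref{coro3.2} (or rather its $W^{s,q}$ analogue is not needed here) to the semigroup-like family $A=e^{-\tau L}$ replaced by the discrete analogue: one checks that the powers $A^k$ satisfy Gaussian-type bounds $|A^k(x,y)|\le C V(x,\sqrt k)^{-1}\exp(-cd(x,y)^2/k)$ for the ``$\tau$-scale'' $\tau=\sqrt k$ — this is the classical local limit theorem / Carne--Varopoulos estimate for the simple random walk — hence \eqref{pVEp2} holds for $A^k$ with any $a>0$ and $\tau=\sqrt k$, and then Theorem~\ref{Thm5.2}/Theorem~\ref{coro6.2} (with $q=\infty$, using the $H^{s'}$ alternative) gives $\sup_k\|F(k(I-A))\|_{p\to p}\le C\|F\|_{H^s}$ for $s>n\sigma_p+1/2$; to upgrade this to $s>n|1/p-1/2|$ and to all $t>1$ (not just integer $t=k$) one interpolates and uses a standard dyadic/Littlewood--Paley decomposition of $F$ together with the restriction estimate. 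Actually the cleanest route for \eqref{e9.1} is to combine Theorem~\ref{th4.5} applied at scale $\tau=\sqrt t$ with the Stein--Tomas restriction estimate for $A$ away from $0$; I will say more on this below.

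For the second, sharper assertion, the key input is a genuine Stein--Tomas restriction estimate \eqref{st} for $A$. The idea is that near the spectral value $0$ (i.e. for $\xi$ near the origin of $\T^n$) the operator $I-A$ behaves exactly like the Euclidean Laplacian $-\Delta$ on $\R^n$: $1-m(\xi)\approx 2\pi^2|\xi|^2$, so the spectral measure $dE_{I-A}(\lambda)$ restricted to a small interval $[0,1/n]$ is, up to smooth and negligible errors, a pushforward of surface measure on spheres $\{|\xi|^2=\lambda\}$, and hence the classical Stein--Tomas $L^p\to L^2$ restriction theorem on $\R^n$ transfers to give
$$
\big\|F(I-A)\big\|_{p\to 2}\le C\lambda^{n(1/p-1/2)/2-1/2}\|F\|_{L^2(d\lambda)} \quad \text{for }\supp F\subset[\lambda/2,\lambda],\ p\le\tfrac{2(n+1)}{n+3},
$$
which is exactly the content of \eqref{st} with $q=2$ at scale $\tau\sim\lambda^{-1/2}$ (after inserting the volume weight $V_\tau^{\sigma_p}\sim\tau^{n\sigma_p}$). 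Making this transference rigorous — estimating $e^{it(I-A)}$ kernels on $\Z^n$ by stationary phase on $\T^n$, isolating the quadratic critical point at $\xi=0$, and controlling the contributions of the other critical points (which, because $1-m$ is nondegenerate of a different type there, give better decay) — is the step I expect to be the main obstacle; it is essentially a discrete dispersive estimate of the form $\|[e^{it(I-A)}]\|_{1\to\infty}\le C(1+|t|)^{-n/2}$ for the relevant frequency-localized piece, and one must be careful with the scaling of the ball $B(x,\tau)$ when $\tau<1$ versus $\tau\ge 1$. An alternative to proving the dispersive/Strichartz bound directly is to quote the known sharp spectral multiplier results for $\ell^p(\Z^n)$ random walks in the literature, or to use the fact (mentioned in the introduction, cf. \cite{BO, COSY}) that Strichartz estimates imply the restriction condition \eqref{st}; I would state the dispersive estimate as a lemma and prove it by stationary phase.

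Granting the restriction estimate \eqref{st} on $(R_1,R_2)\subset[0,1/n]$ with $q=2$ and $p\le\frac{2(n+1)}{n+3}$, the second statement follows by applying Theorem~\ref{th4.5} (in the scale-invariant form of Theorem~\ref{coro6.2}): with $q=2$ the threshold in Theorem~\ref{th4.5} becomes $s>n(1/p-1/2)+\frac{n}{4[a]}$, and since the Gaussian bound gives $a$ arbitrarily large we may let $[a]\to\infty$, so $s>n(1/p-1/2)=n|1/p-1/2|$ suffices, matching the hypothesis \eqref{e9.2}. This yields $L^p$-boundedness of $F(I-A)$ for the stated range $1<p<\frac{2n+2}{n+3}$; the weak type $(1,1)$ endpoint at $p=1$ then follows from the standard Calderón--Zygmund argument for spectral multipliers with Gaussian kernel bounds (as in \cite{DOS, SYY}), using that the kernel of $F(I-A)$ satisfies the required Hörmander-type integral condition — the support condition $\supp F\subset[0,1/n]$ and the hypothesis \eqref{e9.2} on dyadic pieces are exactly what is needed for that. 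I would organize the write-up as: (i) Fourier/symbol setup and reduction to frequencies near $0$; (ii) statement and proof of the dispersive estimate by stationary phase; (iii) deduction of \eqref{st}; (iv) application of Theorems~\ref{th4.5} and~\ref{th3.1} to get \eqref{e9.1} and the $L^p$ part of the second claim; (v) the weak-$(1,1)$ endpoint via Calderón--Zygmund.
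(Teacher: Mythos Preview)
Your overall architecture matches the paper's: verify Gaussian bounds (hence \eqref{pVEp2} with arbitrarily large $a$) for the iterates $A^k$, establish a Stein--Tomas type restriction estimate on the relevant spectral window, feed both into Theorem~\ref{coro6.2} with $q=2$ and let $[a]\to\infty$ to remove the $n/(4[a])$ loss, and then appeal to singular-integral machinery for the second assertion and the weak $(1,1)$ endpoint. The Gaussian bound for $A^k$ is indeed Hebisch--Saloff-Coste \cite{HS}, and the paper derives $(\mathrm{ST}^2_{p,2}(\sqrt{k}))$ from a spectral-measure bound by the $T^*T$ argument you describe.

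The substantive difference is in how the restriction input is obtained. You propose either (a) transferring the Euclidean Stein--Tomas theorem via the approximation $1-m(\xi)\approx c|\xi|^2$ near $\xi=0$, or (b) proving a frequency-localized dispersive estimate by stationary phase and deducing restriction from it. Both are viable, but the paper takes a third, more direct route: it proves the spectral-measure bound $\|dE_A(\lambda)\|_{p\to p'}\le C(1-\lambda)^{n(1/p-1/p')/2-1}$ for $\lambda\in(1-1/n,1)$ (Proposition~\ref{le5.1}) by writing $dE_A(\lambda)$ as convolution with the Fourier transform of the weighted surface measure on the level set $\sigma_\lambda=\{m=\lambda\}$, rescaling by $(1-\lambda)^{1/2}$ to a surface $\widetilde\sigma_\lambda$ of bounded geometry, computing the Gaussian curvature of $\widetilde\sigma_\lambda$ explicitly and showing it is bounded below uniformly in $\lambda$, and then invoking the Bak--Seeger restriction theorem \cite{BS} (Lemma~\ref{lebs}) together with the Plancherel--P\'olya inequality to pass between $\ell^{p}(\Z^n)$ and $L^{p}(\R^n)$. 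This avoids quantifying any transference error and does not require a separate dispersive lemma; your route (b) would, however, yield Strichartz-type information not needed here, and route (a) is in fact acknowledged in the paper's closing remark as an alternative approach. One small correction to your outline: since $\mathrm{supp}\,F\subset(0,1/n)$ forces $\lambda\in(1-1/n,1)$, the level sets $\sigma_\lambda$ lie entirely in a neighbourhood of $\xi=0$ on the torus, so there are no ``other critical points'' to control.

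For the second assertion the paper does not write out a Calder\'on--Zygmund argument but simply invokes \cite[Theorem~3.3]{SYY}, which packages exactly the ingredients you list (Gaussian kernel bounds, the restriction condition, and the dyadic H\"ormander hypothesis \eqref{e9.2}) and delivers both $L^p$-boundedness on the stated range and the weak $(1,1)$ endpoint in one stroke.
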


The proof of   Theorem~\ref{th5.6} is given at the end of this section and it is based on the following restriction type estimate.

\begin{proposition}\label{le5.1}
Let $A$ be defined as above and $dE(\lambda)$ be the spectral measure of $A$. Then for $\lambda\in [1-1/n,1]$
\begin{eqnarray}\label{dElambda}
\|dE(\lambda)\|_{p\to p'} \leq C(1-\lambda)^{n(1/p-1/p')/2-1 }
\end{eqnarray}
 for $1<p< (2n+2)/(n+3)$.
\end{proposition}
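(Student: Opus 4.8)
The plan is to compute the spectral measure $dE(\lambda)$ explicitly via the Fourier transform on $\Z^n$, recognizing $A$ as a Fourier multiplier. Since $A$ acts by convolution, the (partial) Fourier transform $\mathcal F\colon \ell^2(\Z^n)\to L^2(\T^n)$ diagonalizes $A$: one has $\widehat{Af}(\theta)=m(\theta)\widehat f(\theta)$ with symbol $m(\theta)=\frac1n\sum_{i=1}^n\cos\theta_i$. Thus $A$ is unitarily equivalent to multiplication by $m$ on $L^2(\T^n)$, and for a Borel set $\Omega$ the spectral projection $E(\Omega)$ is multiplication by $\chi_{\{m\in\Omega\}}$. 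The kernel of $dE(\lambda)$ is therefore the inverse Fourier transform of the surface measure carried by the level set $\{\theta\in\T^n: m(\theta)=\lambda\}$, i.e. a (weighted) oscillatory integral over that hypersurface. First I would write, for $1-1/n\le\lambda\le 1$,
\begin{equation*}
dE(\lambda)(\mathbf d,\mathbf d')=\frac{1}{(2\pi)^n}\int_{\{m(\theta)=\lambda\}} e^{i(\mathbf d-\mathbf d')\cdot\theta}\,\frac{d\sigma_\lambda(\theta)}{|\nabla m(\theta)|},
\end{equation*}
so that bounding $\|dE(\lambda)\|_{p\to p'}$ reduces, by the standard $TT^*$/Stein--Tomas argument, to an $L^2$ restriction–extension estimate for this hypersurface.

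The key point is the geometry of the level sets of $m$ near $\lambda=1$. The maximum of $m$ is attained only at $\theta=0$, where the Hessian of $m$ is $-\frac1n\,\mathrm{Id}$, so for $\lambda$ close to $1$ the level set $\{m=\lambda\}$ is a small, convex, ellipsoid-like hypersurface of ``radius'' $\sim(1-\lambda)^{1/2}$ with everywhere non-vanishing Gaussian curvature (curvature $\sim (1-\lambda)^{-1/2}$ in each principal direction, but after rescaling it is a perturbation of the unit sphere $S^{n-1}$). Concretely I would rescale $\theta=(1-\lambda)^{1/2}\omega$; then $\{m(\theta)=\lambda\}$ becomes, to leading order, the unit sphere in $\omega$-coordinates with smooth lower-order corrections, and the surface measure scales by $(1-\lambda)^{(n-1)/2}$. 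Applying the classical Stein--Tomas restriction theorem for a compact hypersurface with non-vanishing Gaussian curvature on $S^{n-1}$ (valid precisely in the range $1\le p\le \frac{2(n+1)}{n+3}$, i.e. $p'\ge\frac{2(n+1)}{n-1}$), and carefully tracking how the dilation $\theta\mapsto(1-\lambda)^{1/2}\omega$ and the Jacobian factor $|\nabla m|^{-1}\sim(1-\lambda)^{-1/2}$ rescale the $\ell^p(\Z^n)\to\ell^{p'}(\Z^n)$ operator norm, yields the claimed power $(1-\lambda)^{n(1/p-1/p')/2-1}$. The exponent $-1$ comes from combining the measure factor $(1-\lambda)^{(n-1)/2}$, the gradient factor $(1-\lambda)^{-1/2}$, and the scaling of the Stein--Tomas constant.

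The main obstacle is making the reduction to the sphere rigorous and uniform in $\lambda$: the hypersurfaces $\{m=\lambda\}$ degenerate to a point as $\lambda\to1$, and one must check that after rescaling the family of surfaces stays in a compact class of smooth hypersurfaces with curvature bounded below, with all constants (in particular the Stein--Tomas constant) uniform. This requires verifying that $m$ is, near $0$, a non-degenerate Morse function and that the rescaled surfaces converge in $C^k$ to $S^{n-1}$, so a single application of Stein--Tomas with a fixed constant suffices for all $\lambda$ in the relevant range; one should also handle the easy endpoint $\lambda=1$ (where $dE$ has an atom of rank one, trivially bounded) and note that for $\lambda$ bounded away from $1$ the claimed estimate is not needed here. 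A clean alternative is to invoke directly the sharp fixed-time estimates for the discrete wave/Klein--Gordon propagator $e^{it\sqrt{I-A}}$ or equivalently known restriction estimates for the symbol $m$ on $\Z^n$ (as in the Euclidean model $\sqrt{-\Delta}$), from which \eqref{dElambda} follows by writing $dE(\lambda)$ as a limit of smooth spectral cutoffs; but the self-contained route through the explicit Fourier-side surface integral is the one I would carry out.
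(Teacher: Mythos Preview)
Your plan is essentially the paper's own proof: diagonalize $A$ via the Fourier transform on $\T^n$, write $dE(\lambda)$ as convolution with the inverse transform of the surface measure on $\{G=\lambda\}$ weighted by $|\nabla G|^{-1}$, rescale by $(1-\lambda)^{1/2}$, verify uniform non-vanishing Gaussian curvature of the rescaled surfaces, and conclude by a Stein--Tomas type restriction estimate together with a $TT^*$ argument. Two technical points you gloss over but the paper makes explicit: (i) the passage from $\ell^p(\Z^n)$ to $L^p(\R^n)$ needed to invoke the Euclidean restriction theorem is done via the Plancherel--P\'olya inequality, and (ii) the paper quotes the Bak--Seeger form of Stein--Tomas (Lorentz-space version) rather than the classical statement, though for the open range $1<p<\tfrac{2n+2}{n+3}$ this distinction is immaterial.
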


The proof of Proposition~\ref{le5.1} is based on the following  result   due to 
 Bak and Seeger~\cite[Theorem 1.1]{BS}.
 
\begin{lemma}\label{lebs}
Consider a probability measure $\mu$ on $\mathbb{R}^n$. Assume that for positive constants 
$0<a<n$, $0<b\leq a/2$, $M_i\geq 1, i=1,2$, $\mu$ satisfies 
\begin{equation}\label{B-S1}
\sup_{r_B\leq 1} \frac{\mu(B(x_B, r_B))}{r_B^a}\leq M_1
\end{equation}
where the supremum is taken over all balls with radius $\leq 1$ and
\begin{equation}\label{B-S2}
\sup_{|\xi|\geq 1}|\xi|^b|\widehat d\mu|\leq M_2.
\end{equation}
Let $p_0=\frac{2(n-a+b)}{2(n-a)+b}$. Then
\begin{equation}\label{B-S}
\int \big|\widehat{f}\big|^2 d\mu \leq C M_1^{\frac{b}{n-a+b}}M_2^{\frac{n-a}{n-a+b}}\|f\|^2_{L^{p_0,2}(\mathbb{R}^n)},
\end{equation}
where $L^{p_0,2}$ is the Lorentz space.
\end{lemma}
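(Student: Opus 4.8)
The plan is to run a $TT^\ast$/Stein--Tomas argument adapted to the two fractal hypotheses \eqref{B-S1}--\eqref{B-S2}. First I would recast \eqref{B-S} as a mapping property of a single self-adjoint operator. Writing $\mathcal R f=\widehat f$, viewed as an element of $L^2(d\mu)$, one has (up to normalising constants and a harmless reflection of $\mu$) $\mathcal R^\ast h=(h\,d\mu)^\vee$, so that $T:=\mathcal R^\ast\mathcal R$ is convolution with $\widehat{d\mu}$ and
\[
\int\big|\widehat f\big|^2\,d\mu=\langle\mathcal R f,\mathcal R f\rangle_{L^2(d\mu)}=\langle Tf,f\rangle\ge 0 .
\]
Since $L^{p_0,2}(\RR)$ and $L^{p_0',2}(\RR)$ are dual under the integration pairing and $T\ge 0$ is self-adjoint, the estimate \eqref{B-S} is \emph{equivalent} to the operator bound $\|T\|_{L^{p_0,2}\to L^{p_0',2}}\le C\,M_1^{b/(n-a+b)}M_2^{(n-a)/(n-a+b)}$. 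One checks along the way that $1<p_0<2$, using $0<a<n$ and $b>0$.

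Next I would decompose $T$ according to the size of the convolution variable: fix a Littlewood--Paley partition $\psi_0(x)+\sum_{j\ge1}\psi(2^{-j}x)=1$ with $\psi_0\in C_c^\infty(\{|x|\le 2\})$, $\psi$ even and supported in $\{1/2\le|x|\le2\}$, and set $K_j:=\psi(2^{-j}\cdot)\,\widehat{d\mu}$, $K_0:=\psi_0\,\widehat{d\mu}$, $T_jf:=f\ast K_j$, so $T=\sum_{j\ge0}T_j$. The core are two building-block bounds. For $j\ge 1$ one has $\|T_j\|_{L^1\to L^\infty}\le\|K_j\|_\infty$, and since $|x|\ge 2^{j-1}\ge 1$ on $\operatorname{supp}K_j$, hypothesis \eqref{B-S2} gives $\|K_j\|_\infty\le C M_2 2^{-jb}$ (while for $j=0$ one simply uses $\|\widehat{d\mu}\|_\infty\le\mu(\RR)=1\le M_2$). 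On the other hand $\|T_j\|_{L^2\to L^2}=\|\widehat{K_j}\|_\infty$, and writing $\widehat{K_j}=2^{jn}\widehat\psi(2^j\cdot)\ast d\widetilde\mu$ with $\widehat\psi$ Schwartz yields the pointwise bound $|\widehat{K_j}(\xi)|\le C_N\int 2^{jn}(1+2^j|\xi-y|)^{-N}\,d\mu(y)$; splitting the integral into the shells $|\xi-y|\sim 2^m 2^{-j}$ and using the ball condition \eqref{B-S1} (together with the trivial $\mu(\RR)=1\le M_1$ on shells of radius $>1$), the sum over $m$ converges once $N>a$ and gives $\|\widehat{K_j}\|_\infty\le C M_1 2^{j(n-a)}$.

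Interpolating these two bounds gives, for every $t\in[0,1]$, $\|T_j\|_{L^{p_t}\to L^{p_t'}}\le C(M_2 2^{-jb})^{1-t}(M_1 2^{j(n-a)})^{t}$ with $1/p_t=1-t/2$. At the critical value $t_\ast=b/(n-a+b)$ one has $p_{t_\ast}=p_0$, the $j$-independent factor equals exactly $M_1^{b/(n-a+b)}M_2^{(n-a)/(n-a+b)}$, and the power of $2^j$ is $t(n-a+b)-b$, which is negative for $t<t_\ast$ and positive for $t>t_\ast$. Thus the two building-block estimates interpolate to the critical exponent $p_0$ from both sides, with the $T_j$-norms decaying geometrically in $j$ on one side and growing on the other; applying Bourgain's interpolation-of-a-series lemma with a pair $t_-<t_\ast<t_+$ then produces the bound $T\colon L^{p_0,1}\to L^{p_0',\infty}$ with constant $C\,M_1^{b/(n-a+b)}M_2^{(n-a)/(n-a+b)}$. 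To upgrade this to the \emph{sharp} Lorentz pair $L^{p_0,2}\to L^{p_0',2}$ I would exploit the self-adjoint nonnegative structure $T=\mathcal R^\ast\mathcal R$: decomposing a test function into its layers $f_k=f\mathbf 1_{\{2^k\le|f|<2^{k+1}\}}$, estimating the matrix $\langle T_j f_k,f_{k'}\rangle$ by H\"older in the two building-block norms with the exponent chosen adaptively in $(j,k,k')$, and summing via Schur's test — positivity forces the diagonal $k=k'$ terms to be nonnegative and the off-diagonal ones to decay, which is precisely what converts the $\ell^1$ over layers into the $\ell^2$ that is $L^{p_0,2}$. Combined with the first paragraph this is \eqref{B-S}.

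The main obstacle is exactly this last step: reaching the endpoint exponent $p_0$ with the sharp Lorentz index $2$, rather than only $L^{p_0,1}\to L^{p_0',\infty}$ or $L^{p_0}\to L^{p_0'}$ with a logarithmic loss — the familiar subtlety in Stein--Tomas-type theorems, here requiring the Bourgain summation argument intertwined with the bilinear/positive structure of $T$. The remaining points (the shell decomposition giving the $L^2$ bound, the treatment of the low-frequency piece $K_0$ where \eqref{B-S2} is vacuous, and the bookkeeping that the powers of $M_1,M_2$ come out as stated, with the hypothesis $b\le a/2$ keeping the interpolation parameters in the admissible range) are routine.
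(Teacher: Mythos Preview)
The paper does not prove this lemma at all: it is quoted verbatim as a known result due to Bak and Seeger \cite[Theorem 1.1]{BS}, and the authors simply invoke it in the proof of Proposition~\ref{le5.1}. So there is no ``paper's own proof'' to compare against.

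That said, your sketch is a faithful outline of the actual Bak--Seeger argument. The $TT^\ast$ reduction to convolution by $\widehat{d\mu}$, the dyadic splitting $T=\sum_j T_j$ in the spatial variable, the pair of building-block estimates $\|T_j\|_{1\to\infty}\le CM_2 2^{-jb}$ from \eqref{B-S2} and $\|T_j\|_{2\to2}\le CM_1 2^{j(n-a)}$ from \eqref{B-S1}, and the Bourgain summation trick to hit the endpoint $p_0$ are exactly the ingredients in \cite{BS}. You have also correctly isolated the genuine difficulty: the passage from $L^{p_0,1}\to L^{p_0',\infty}$ (which Bourgain's lemma gives directly) to the sharp $L^{p_0,2}\to L^{p_0',2}$. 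In \cite{BS} this upgrade is carried out by an atomic decomposition in Lorentz space together with the bilinear positivity $\langle Tf,f\rangle\ge 0$, essentially as you describe with the layer decomposition $f=\sum_k f_k$ and Schur-type summation of $\langle T_j f_k,f_{k'}\rangle$. One small correction: the hypothesis $b\le a/2$ is not needed to keep interpolation parameters in range (one checks $1<p_0<2$ from $0<a<n$ and $b>0$ alone); rather it is a natural constraint since a measure satisfying \eqref{B-S1} cannot have Fourier decay faster than $|\xi|^{-a/2}$, so the statement would be vacuous for $b>a/2$.
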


\begin{proof}[Proof of Proposition~\ref{le5.1}] \
Let ${\bf T}^n$ be the $n$-dimensional  torus (note that $n$ is equal to the homogeneous dimension of ${\bf T}^n$). For any function $f \in l^2(\Z^n)$,
one can define the Fourier series   $\mathcal{F} f \colon {\bf T}^n \to \CC$ 
of $f$ by
$$
 \mathcal{F} f({\bf {\theta}})=\sum_{{\bf {d}}\in \Z^n} f({\bf {d}}) e^{i \langle {\bf {d}} ,  {\bf {\theta}} \rangle }.
$$
Then the inverse Fourier series  $\mathcal{F}^{-1} f \colon \Z^n \to \CC$   is defined by
$$
 \mathcal{F}^{-1} f({\bf {d}})=\frac{1}{(2\pi)^n}\int_{{\bf T}^n} f({\bf {\theta}}) e^{-i \langle {\bf {d}} ,  {\bf {\theta}} \rangle}.
$$
Define the convolution of $f,g\in L^2(\Z^n)$ by
$$
f\ast g({\bf {d}})=\sum_{{\bf {d}}_1\in \Z^n} f({\bf {d}}-{\bf {d}}_1)g({\bf {d}}_1).
$$
Note that
\begin{equation*}
\mathcal{F}( Af)(\theta) =       \left( \frac1n\sum_{j=1}^{n} \cos \theta_j
\right) \mathcal{F} f (\theta)=\left( G({\bf \theta})
\right) \mathcal{F} f (\theta),
\end{equation*}
where
$$
G(\theta)= \frac1n\sum_{j=1}^{n} \cos \theta_j .
$$
Hence for any  continuous function $F$
\begin{eqnarray*}
\int_{-1}^1 F(\lambda)dE(\lambda)f= F(A)f({\bf d})&=&\mathcal{F}^{-1}\left(F(G(\theta))\widehat f(\theta)\right)\\
&=& \left(\frac{1}{(2\pi )^n}\int_{\R} \int_{\sigma_\lambda}F(\lambda) e^{i\langle {\bf {d}},  {\bf {\theta}} \rangle} \frac{1}{|\nabla G|}\ d\sigma_\lambda(\theta)d\lambda\right)\ast f\\
&=& \int_{\R} F(\lambda) \left(\Gamma_\lambda\ast f \right)d\lambda,
\end{eqnarray*}
where $\sigma_\lambda $ is the level set defined  by the formula
$$
\sigma_\lambda = \{\theta \colon \,     \frac1n\sum_{j=1}^{n} \cos \theta_j =\lambda\}        \subset {\bf T}^n
$$
and
\begin{eqnarray*}
	\Gamma_\lambda( {\bf {d}}) =\frac{1}{(2\pi)^n}
	\int_{\sigma_\lambda}e^{i\langle {\bf {d}} ,  {\bf {\theta}} \rangle }\frac{1}{|\nabla G|}\ d\sigma_\lambda(\theta)
\end{eqnarray*}
 for all $\lambda\in [-1,1]$. Thus 
$$
dE(\lambda) f = f *\Gamma_\lambda.
$$
For the range $\lambda \in (1-1/n,1)$ considered in the proposition, changing variable yields 
\begin{eqnarray*}
	\Gamma_\lambda( {\bf {d}}) =\frac{({1-\lambda})^{(n-1)/2}}{(2\pi i)^n}
	\int_{\sigma'_\lambda}e^{i \langle {\bf {d}},
		\,  (1-\lambda)^{1/2}{\bf {\theta}} \rangle }\frac{1}{|\nabla G|((1-\lambda)^{1/2}\theta)}\ d{\widetilde \sigma}_\lambda(\theta),
\end{eqnarray*}
where
$$
{\widetilde \sigma}_\lambda = \{\theta \colon \,     \frac1n\sum_{j=1}^{n} \cos ((1-\lambda)^{1/2}\theta_j ) =\lambda\}        \subset {\bf T}^n.
$$
Next we define a probability measure $\mu_\lambda$ on the surface ${\widetilde \sigma}_\lambda$ by the formula 
$$
d\mu_\lambda=\frac{1}{|\nabla G|((1-\lambda)^{1/2}\theta)N(\lambda)}\ d{\widetilde \sigma}_\lambda(\theta),
$$
where
$$
N(\lambda)=\int_{{\widetilde \sigma}_\lambda} \frac{1}{|\nabla G|((1-\lambda)^{1/2}\theta)}\ d{\widetilde \sigma}_\lambda(\theta).
$$
For $\lambda \in (1-1/n,1)$ we define  the restriction type operator 
$R_\lambda \colon \, \ell^1(\Z^n) \to L^2({\widetilde \sigma}_\lambda,\mu_\lambda)$ by
$$
(R_\lambda f)(\theta)=
 \mathcal{F} f({\bf {\theta}}(1-\lambda)^{1/2})=\sum_{\bf {d} \in \Z^n} f({\bf {d}}) e^{i \langle {\bf {d}},
  \,  (1-\lambda)^{1/2}{\bf {\theta}} \rangle },\quad\quad \theta\in {\widetilde \sigma}_\lambda. 
$$
Then   the dual operator $R_\lambda^*$ is given by 
$$
(R_\lambda^* f)({\bf {d}})=
 \int_{{\widetilde \sigma}_\lambda}e^{-i(1-\lambda)^{1/2}\langle {\bf {d}},  
 {\bf {\theta}} \rangle }\frac{f(\theta)}{|\nabla G|((1-\lambda)^{1/2}\theta)N(\lambda)}\ d{\widetilde \sigma}_\lambda(\theta).
$$
Hence 
$$
dE(\lambda) f=\frac{N(\lambda)((1-\lambda)^{1/2})^{n-1}}{(2\pi )^n} R_\lambda^*R_\lambda f.
$$
Following the standard approach on the Euclidean space we study boundedness of
the operator $R_\lambda^*$ acting from   $L^2({\widetilde \sigma}_\lambda,\mu_\lambda)$ to 
$ \ell^p(\Z^n)$.
Next we define the operator $\tilde{R}_\lambda^* \colon L^2({\widetilde \sigma}_\lambda,\mu_\lambda) \to L^\infty(\R^n)$ by 
$$
(\tilde{R}_\lambda^* f)({ \xi})=
\int_{{\widetilde \sigma}_\lambda}e^{-i\langle {\xi} ,  (1-\lambda)^{1/2}{\bf {\theta}} \rangle }d \mu_\lambda(\theta).
$$
By the Plancherel-P\'olya inequality (cf. \cite[Section 1.3.3]{Tri})
\begin{equation}\label{PP}
\|(R_\lambda^* f)({\bf {d}})\|_{L^p(\Z^n)}\sim \|R_\lambda^* f(\xi)\|_{L^p(\R^n)}
\end{equation}
for all $1\leq p\leq \infty$.
Hence, it suffices to study  $R_\lambda^* f(\xi),\xi\in \R^n$.
Set $\tilde{G}(\theta)= \frac1n\sum_{j=1}^{n} \cos ((1-\lambda)^{1/2}\theta_j) $. Denote $H(\tilde{G})$  the Hessian corresponding to $\tilde{G}$. Then the Gaussian curvature for an implicitly defined surface corresponding to the equation $\tilde{G}(\theta)=\lambda$
is given by the following formula

\begin{eqnarray*}\label{quuu}
K&=& -\left| \begin{array}{ll}
H(\tilde{G}) & \nabla \tilde{G}^T\\
\nabla \tilde{G}   &0
\end{array}
\right||\nabla \tilde{G}|^{-{(n+1)}}\\
&=&\frac{(-1)^{n+1}((1-\lambda)^{1/2})^{n-1}\prod\limits_{j=1}^n \cos ((1-\lambda)^{1/2}\theta_j )\left(\sum\limits_{j=1}^n 
\sin ( \theta_j (1-\lambda)^{1/2} ) \tan ((1-\lambda)^{1/2}\theta_j) \right)}{ \left(\sum\limits_{j=1}^n \sin^2 ((1-\lambda)^{1/2}\theta_j)\right)^{{ (n+1)/2}}}.
\end{eqnarray*}
Note that if $\lambda \in (1-1/n,1)$, then  $\cos ((1-\lambda)^{1/2}\theta_j)>0$ for all $j$ and 
$$\cos ((1-\lambda)^{1/2}\theta_j)\geq 1-(1-\lambda)n.$$ Indeed, otherwise
$$
\frac1n\sum_{j=1}^{n} \cos ((1-\lambda)^{1/2}\theta_j) < \frac{n-1+1-(1-\lambda)n}{n}=\lambda
$$
which contradicts $\theta\in {\widetilde \sigma}_\lambda$. 
It follows that for every $n\ge 2$ there  exists a positive  constant
$C_n >0$ which  does not depend on $\lambda$ and $\theta$ such that 
$$
|K|\geq C_n.
$$
 There exists also a constant $C>0$ such that for all $\lambda \in (1-1/n,1)$
$$
(1-\lambda)^{1/2}\geq |\nabla G|((1-\lambda)^{1/2}\theta)=\frac{1}{n}\left(\sum_{j=1}^n \sin^2 ( (1-\lambda)^{1/2}\theta_j)\right)^{{ 1/2}}\geq C (1-\lambda)^{1/2}
$$
so $N(\lambda)\sim {(1-\lambda)}^{-1/2}$. Then from Stein~\cite[Page 360, Section 5.7 of Chapter VIII]{St2}, we know that
\begin{eqnarray}\label{surfacemeasure}
|\widehat{d\mu_\lambda}|\le C (1+|\xi|)^{(1-n)/2},
\end{eqnarray}
where $C$ just depends on $n$ and does not depend on $\lambda$ and $\theta$.

Now, it is not difficult to check that surfaces ${\widetilde \sigma}_\lambda$ and measures $\mu_\lambda$ satisfy assumptions of
Lemma~\ref{lebs}. The required exponent for  \eqref{B-S1} is equal to  $a=n-1$. In addition  $d\mu_\lambda$ satisfies \eqref{B-S2} with $b=(n-1)/2$
uniformly in $\lambda \in (1-1/n,1)$.
Hence by Lemma \ref{lebs}
$$
\int_{{\widetilde \sigma}_\lambda} \big|\widehat{f}\big|^2 d\mu_\lambda \leq C\|f\|^2_{L^{p}(\R^n)}
$$
Hence
$$
\|\tilde{R}_\lambda^* f(\xi)\|_{L^{p'}(\R^n)}\leq C{(1-\lambda)}^{-\frac{n}{2p'}}\|f\|_{L^2({\widetilde \sigma}_\lambda, \, \mu_{\lambda})}.
$$
Thus by the Plancherel-P\'olya inequality \eqref{PP}
$$
\|R_\lambda^* f({\bf{d}})\|_{L^{p'}(\Z^n)}\leq C{(1-\lambda)}^{-\frac{n}{2p'}}\|f\|_{L^2({\widetilde \sigma}_\lambda, \, \mu_{\lambda})}
$$
for $1\leq p\leq \frac{2n+2}{n+3}$.
By duality 
$$
\|dE(\lambda)\|_{p\to p'} \leq \frac{N(\lambda){(1-\lambda)}^{\frac{n-1}{2}}}{(2\pi )^n}\| R_\lambda^*R_\lambda\|_{p\to p'}\leq C(1-\lambda)^{n(1/p-1/p')/2-1 },
$$
for $1\leq p\leq \frac{2n+2}{n+3}$ and $\lambda \in (1-1/n,1)$.
This completes the proof of Proposition~\ref{le5.1}.
\end{proof}

\medskip

Now we are able to conclude the proof of  Theorem~\ref{th5.6}.

 \medskip
 
\begin{proof}[Proof of Theorem~\ref{th5.6}]
For every $k\in {\mathbb N}$, we denote $A^k({\bf {d}}_1,{\bf {d}}_2)$ the kernel of $A^k$ for $k\in \mathbb{Z}$. Note that $V(x, k) \sim k^n$.
 It is well-known  (see e.g. \cite{HS}) that 
$A^k({\bf {d}}_1,{\bf {d}}_2)$ satisfies the following Gaussian type upper estimate:
\begin{eqnarray}\label{HeatofRadomW}
A^k({\bf {d}}_1,{\bf {d}}_2)\leq Ck^{-n/2} \exp\left( -\frac{|{\bf {d}}_1-{\bf {d}}_2|^2}{ck}\right).
\end{eqnarray}

Next we  verify that the operators $A^k$ satisfy  condition $ ({{\rm  ST^2_{p, 2}}(\tau) })$ with $\tau={k^{1/2}}$ 
uniformly for all $k\in \N$ for all bounded Borel functions $F$ such that {\rm supp}~$F\subset (1-1/n,1)$. 
By  $T^*T$ argument and Proposition~\ref{le5.1},
  \begin{eqnarray*}
\big\| F(A^k)A^k\big\|_{p\to 2}^2 = \big\||F|^2(A^k)A^{2k}\big\|_{p\to p' }
&\le& \int_{ [(n-1)/n]^{1/k} }^1 |F|^2( \lambda^k) \lambda^{2k}\big\|dE_{A}(\lambda)\big\|_{p \to p'}\, d\lambda \nonumber \\
&\le&{C} \int_{(n-1)/n}^{1} |F|^2( \lambda) \lambda^{2}(1-\lambda^{1/k})^{n(1/p-1/p')/2-1} \,   d\lambda^{1/k}\\
&\le&C \left({1\over k}\right)^{n(1/p-1/p')/2}\|F\|_2,
\end{eqnarray*}
as desired. 

Now  \eqref{e9.1} follows from Theorem \ref{coro6.2}.
The $L^p$ boundedness of $F(I-A)$ for functions $F$ satisfying condition 
\eqref{e9.2} follows from \cite[Theorem 3.3]{SYY}. 
This completes the proof of  Theorem~\ref{th5.6}.
\end{proof}

\begin{remark}
For $n=2$ it is enough to assume that {\rm supp}~$F\subset (0,1)$.
\end{remark}

\begin{remark}
	There is another approach to the proof of  Theorem~\ref{th5.6} via transference type statements on equivalence of $L^p$ boundedness of the Fourier 
	integral and the Fourier series multipliers under suitable condition on
	the multiplier support (cf. \cite{xs}). 
\end{remark}

\medskip

\subsection{Fractional Schr\"odinger operators}
Let $n\geq 1$ and $V, W$ be  locally integrable non-negative functions on
${\Bbb R}^n$. \\
Consider the fractional Schr\"odinger operator  with a potentials  $V$ and $W$:
$$
L=(-\Delta + W)^{\alpha} +V(x), \ \  \ \alpha\in (0,1].
$$
The particular case $\alpha = \frac{1}{2}$ is often referred to as the relativistic Schr\"odinger operator.
The operator $L$ is self-adjoint as an operator associated with a well defined closed quadratic form. By the classical subordination formula together with the Feynman-Kac formula it follows that  the semigroup kernel
$p_t(x,y)$ associated to $e^{-tL}$ satisfies the estimate
\begin{eqnarray*}
	0\leq  p_t(x,y) \leq Ct^{-{n\over 2\alpha}} \Big(1+t^{-{1\over 2\alpha}}|x-y|\Big)^{-(n+2\alpha)}
\end{eqnarray*}
for all $t>0$ and $x,y\in{\Bbb R}^n$.
See page 195 of \cite{Ou}.
Hence,  estimates \eqref{pVEp2}  hold  for  $p=1$ and  $a=2\alpha$. If $n = 1$ and $\alpha > \frac{1}{2}$ then we can apply Corollary \ref{coro3.2} and obtain a spectral multiplier result for $L$.

 \bigskip

 \noindent
{\bf Acknowledgments.}    P. Chen was supported by NNSF of China 11501583, Guangdong Natural Science Foundation
2016A030313351. The research of E.M. Ouhabaz is partly supported by the ANR project RAGE ANR-18-CE40-0012-01.
 A.~Sikora was supported by
Australian Research Council  Discovery Grant  DP160100941.
 L. Yan was supported by the NNSF
of China, Grant No. ~11871480, and Guangdong Special Support Program. 
The authors would like to  thank Xianghong Chen for useful discussions on the proof of Theorem \ref{th5.6}.

\bigskip

\bigskip

\end{document}